\newcommand{\reg}{D}
\newcommand{\regg}{d}
\newcommand{\regOp}{\mathcal{\reg}_\varepsilon}
\newcommand{\R}{\mathbb R}
\newcommand{\N}{\mathbb N}
\newcommand{\pre}{\mathcal{F}}
\newcommand{\corr}{K}
\newcommand{\id}{\mathbb I}
\newcommand{\New}{Newton} 
\newcommand{\Newc}{Newton$_{\varepsilon}$} 
\newcommand{\NlinRAS}{Newton$_{\text{RAS}}$} 
\newcommand{\NlinRASc}{Newton$_{\text{RAS},\varepsilon}$} 
\newcommand{\RASPEN}{RASPEN}
\newcommand{\RASPENc}{RASPEN$_\varepsilon$}
\DeclareMathOperator{\proj}{proj}
\theoremstyle{thmstyleone}%
\newtheorem{theorem}{Theorem}
\newtheorem{lemma}{Lemma}
\newtheorem{corollary}{Corollary}
\theoremstyle{thmstyletwo}%
\newtheorem{remark}{Remark}%
\theoremstyle{thmstylethree}%
\newtheorem{assumption}{Assumption}
\begin{document}

\title[Article Title]{Solving Semi-Linear Elliptic Optimal Control Problems with $L^1$-Cost via Regularization and RAS-Preconditioned Newton Methods}


\author[1]{\fnm{Gabriele} \sur{Ciaramella}}\email{gabriele.ciaramella@polimi.it}

\author*[2]{\fnm{Michael} \sur{Kartmann}}\email{michael.kartmann@uni-konstanz.de}

\author[3]{\fnm{Georg} \sur{Müller}}\email{georg.mueller@uni-heidelberg.de}

\affil[1]{\orgdiv{MOX Lab, Dipartimento di Matematico}, \orgname{ Politecnico di Milano}, \orgaddress{\street{Piazza Leonardo da Vinci 32}, \city{Milano}, \postcode{20133}, \country{Italy}}}

\affil*[2]{\orgdiv{Departement of Mathematics}, \orgname{University of Konstanz}, \orgaddress{\street{Universitätsstraße 10}, \city{Konstanz}, \postcode{78457}, \country{Germany}}}

\affil[3]{\orgdiv{Interdisciplinary Center for Scientific Computing}, \orgname{Heidelberg University}, \orgaddress{ \city{Heidelberg}, \postcode{69120},  \country{Germany}}}

\abstract{We present a new parallel computational framework for the efficient solution of a class of $L^2$/$L^1$-regularized optimal control problems governed by semi-linear elliptic partial differential equations (PDEs).
The main difficulty in solving this type of problem is the nonlinearity and non-smoothness of the $L^1$-term in the cost functional, which we address by employing a combination of several tools.
First, we approximate the non-differentiable projection operator appearing in the
optimality system by an appropriately chosen regularized operator and establish convergence of the resulting system's solutions.
Second, we apply a continuation strategy to control the regularization parameter to improve the behavior of (damped) Newton methods.
Third, we combine Newton's method with a domain-decomposition-based
nonlinear preconditioning, which improves its robustness properties and allows for parallelization.
The efficiency of the proposed numerical framework is demonstrated by extensive numerical experiments.}

\keywords{optimal control of elliptic PDEs, non-smooth optimization, nonlinear preconditioning, regularization, Schwarz methods, domain decomposition methods}


\pacs[MSC Classification]{49K20, 49M20, 49M27, 49M15, 65N55}

\maketitle

	\section{Introduction}
In this paper, we combine a smoothing-continuation technique and do\-main\--de\-com\-po\-si\-tion\--based nonlinear preconditioning of Newton's method to obtain a novel, robust, efficient computational framework for finding stationary points of the semi-linear elliptic optimal control problem
\begin{subequations}
	\label{eq:P}
	\begin{alignat}{1}
		\label{eq:P_cost}
		\text{minimize }
		{}
		&
		J(y,u)
		\coloneqq
		{}
		\frac{1}{2}\big\|y- y_d\big\|_{L^2}^2
		+
		\frac{\nu}{2}\big\|u\big\|_{L^2}^2
		+
		\mu \big\|u\big\|_{L^1},
		\\
		\label{eq:P_PDE}
		\text{s.t. }
		{}
		&
		(y,u)\in H_0^1(\Omega) \times L^2(\Omega)
		\\
		{}
		&
		A y + \varphi(y)= f + u \; \text{ in } \; H^{-1}(\Omega).
	\end{alignat}
\end{subequations}
Here $\Omega \subseteq \R^n$ is a bounded Lipschitz domain, $y \in H_0^1(\Omega)$ and $u \in L^2(\Omega)$ are the state- and control function, respectively, $y_d\in L^2(\Omega)$ is a desired state,
the operator $A\colon H^1_0(\Omega) \rightarrow H^{-1}(\Omega)$ is a linear, self-adjoint, elliptic differential operator (in weak form)
and $\varphi$ is a nonlinear real function.
The parameters $\nu,\mu > 0$ act as weights for the regularization terms in the cost functional.

This class of problems is particularly interesting because of the non-smooth $L^1$-type regularization term in the cost function, which promotes sparsity in the optimal controls' support but requires solution techniques that are able to cope with its non-differentiability.
Sparsity in the control solution is desirable, e.g., in actuator- and sensor placement problems, see, e.g., the introduction of \cite{Stadler:2009:1}, where optimality conditions and the structure of optimal solutions for a box-constrained, linear(elliptic)-quadratic problem with $L^1$-type cost functional are examined.
Other relevant works considering problems with $L^1$-type cost functional include \cite{WachsmuthWachsmuth:2011:3}, where advanced computational aspects, including convergence results and error estimates, are discussed.
The authors of \cite{CasasHerzogWachsmuth:2012:2} derive a-priori discretization-error estimates for problems with $L^1$-cost and \emph{semi-linear} elliptic PDEs.
Additionally, $L^1$-type terms (often applied to the gradient of a function) frequently appear in image denoising and impainting problems, see \cite{RudinOsherFatemi:1992:1, Getreuer:2012:1, GoldsteinOsher:2009:1, ChanTai:2003:1, HerrmannHerzogKroenerSchmidtVidalNunez:2018:1} and in quantum control problems, see \cite{CB:2016:1,CB:2016:2}.
In multiobjective optimization, a problem governed by a parabolic, semi-linear PDE constraint and $L^1$-type non-smoothness in the cost functional is treated by set-oriented methods in \cite{BeermannDellnitzPeitzVolkwein:2017:2}.
The authors of \cite{BiekerGebkenPeitz:2022:1} apply a continuation method (in a different sense than it is understood in our work) to a bicriterial optimization problem, where the $L^1$-regularization is one of the objectives to be minimized.
Other continuation approaches for optimal control problems can be found in \cite{WachsmuthWachsmuth:2011:3,CBDW2015}. 
Iterative methods such as Bregman iterations (\cite{Bregman1967}, see also \cite{GoldsteinOsher:2009:1}) or the Primal-Dual Hybrid Gradient (PDHG) method \cite{ChambollePock:2011:1} offer robust and efficient first-order alternatives to Newton-type methods for $L^1$-regularized problems. 
In turn, the nonlinearly preconditioned Newton approach presented in this paper benefits from the use of second-order information and improves standard Newton methods' convergence, and offers natural parallelization.
Our analysis of problem \eqref{eq:P} is based on the refined optimality conditions in \cite{CasasHerzogWachsmuth:2012:2}.
While standard necessary first-order optimality conditions for problem \eqref{eq:P} can easily be derived by applying Clarke's subdifferential calculus, the authors of \cite{CasasHerzogWachsmuth:2012:2} derived an explicit, non-smooth (but Lipschitz-continuous) representation of the subderivative that corresponds to the non-smooth $L^1$-part of the cost functional.
Applying this technique to problem \eqref{eq:P}, we obtain the first-order necessary optimality system
\begin{subequations}\label{eq:non-smooth_OS}
	\begin{align}
		A \bar{y}+\varphi(\bar{y})&=f+\bar{u} &&\text{in }   H^{-1}(\Omega), \\
		\Big(A +\varphi'(\bar{y})\Big)\bar{p}&=\bar{y}-y_d &&\text{in }   H^{-1}(\Omega), \\
		\bar{p}+\nu \bar{u}+\mu \proj _{[-1,1]}\bigg(-\frac{\bar{p}}{\mu}\bigg)&= 0 && \text{in } \Omega,\label{eq:non-smooth_OS:3}
	\end{align}
\end{subequations}
with $\bar u\in L^2(\Omega),\, \bar y,\bar p\in H_0^1(\Omega)$.
The standard approach for solving \eqref{eq:non-smooth_OS} is the application of a damped semi-smooth Newton method.
In contrast, our proposed computational framework is built on a parameter continuation technique (see, e.g., \cite{AllgowerGeorg2003}) for a smoothing parameter combined with an extension of the domain-decomposition nonlinear preconditioning approach initially presented in \cite{DGKW16} for solving elliptic PDEs; see also \cite{Gander:2017,CK:2002,CKY:2001,GuKwok2020}.
Specifically, to improve robustness and numerical performance, we propose to regularize the problem by replacing the non-smooth Nemytskii-type projection operator 
\begin{equation*}
	\proj_{[-1,1]}(x) = \begin{cases}
		x & \text{for } x\in [-1,1],\\
		1 & \text{for } x>1,\\
		-1 & \text{for } x<-1. 
	\end{cases}
\end{equation*}
for $ x\in \R$ in \eqref{eq:non-smooth_OS:3} with a smoothed approximation $P_\varepsilon$ and to solve the smoothed versions of the problem efficiently using preconditioned Newton-Krylov methods as part of a continuation strategy for the smoothing parameter $\varepsilon$, where the subproblems of the continuation strategy are solved using an extension of the Restricted Additive Schwarz Preconditioned Exact Newton method (RASPEN, \cite{DGKW16}), which is the application of Newton's method to the fixed-point equation derived from the nonlinear Restricted Additive Schwarz (RAS) iteration for the regularized first-order system \eqref{eq:non-smooth_OS}.
Nonlinear RAS is a domain decomposition method that computes the solution to a given problem defined on a domain $\Omega$ by iteratively solving smaller subproblems defined on subdomains of $\Omega$, allowing for parallelization across the subdomains.

Note that a similar nonlinear preconditioning approach has been proposed in \cite{CKM2021} for elliptic-PDE-constrained optimization problems and in \cite{CM:2021} for economic parabolic control problems.
However, these approaches are based on a different domain decomposition method using Robin-type transmission conditions applied directly to the non-smooth optimality system.

This work is organized as follows.
In Section \ref{sec:NotAndAss}, we fix the required notation, state the main assumptions used in this work and collect preliminary results including the fundamental first-order optimality system \eqref{eq:non-smooth_OS}.
In Section \ref{sec:reg}, we introduce the regularization of the optimality system and prove that there exist solutions by showing that it corresponds to a necessary first-order optimality system of a solvable smooth optimization problem.
Section \ref{sec:convAna} focuses on the convergence analysis of the regularized systems' solutions to the solution of the original non-smooth system \eqref{eq:non-smooth_OS}.
In Section \ref{sec:FrameworkAndNumerics}, we introduce and extend the RAS and RASPEN preconditioning techniques for systems of PDEs.
Finally, Section \ref{sec:num} investigates and compares the efficiency of the numerical approaches.
Specifically, we examine the influence of introducing combinations of the regularization, the parameter continuation and (non-)linear RAS preconditioning on the performance of solvers on the outer (Newton) and inner (GMRES) level with respect to number of iterations, stability of numbers of iterations and computation time.
A short conclusion of our findings is presented in Section \ref{sec:concl}.

\section{Notation, assumptions and preliminary results}\label{sec:NotAndAss}
As long as the meaning is clear from context, a Nemytskii operator associated to a real function is denoted by the same symbol.
The Nemytskii operator of the nonlinearity $\varphi\colon \mathbb R \to \mathbb R$ in the PDE-constraint is understood to map $L^2$ into itself.
All norms on Hilbert spaces $(H,\langle \cdot, \cdot \rangle)$ are assumed to be induced by the scalar product unless stated otherwise.
The space $H_{0}^1(\Omega)$ is endowed with the inner product ${{\langle u,v\rangle}_{H_0^1}=\int_\Omega\nabla u\cdot\nabla v+ uv\,\mathrm d x}$.
The corresponding dual space is denoted as $H_{0}^1(\Omega)' = H^{-1}(\Omega)$.
When elements in $H_{0}^1(\Omega)$
are interpreted as elements of $H^{-1}(\Omega)$, this always means the Gelfand-type identification via the embedding into $L^2(\Omega)$ and the $L^2$-Riesz mapping.

\begin{assumption}\label{assumption:standing}$\;$ 
	\begin{enumerate}[leftmargin=0.5cm]
		\item The set $\Omega \subseteq\mathbb{R}^n$ for $n\in\{2,3\}$ is a bounded domain with $C^{0,1}$ boundary (see, e.g., \cite[Section 6.2]{GT01}).
		\item The functions $f,y_d$ are in $L^2(\Omega)$ and $\mu, \nu\in\mathbb{R}_{>0}$.
		\item The operator $A\colon H^1_0(\Omega) \rightarrow H^{-1}(\Omega)$ is linear and elliptic with corresponding strong differential form $\mathcal{A}y\coloneqq-\sum\limits_{i,j=1}^{n}\partial_{x_j}(a_{ij}\partial_{x_i}y)+a_0 y$,
		$a_0,a_{ij}\in L^\infty(\Omega),\, a_0\geq 0$ and $a_{ij}=a_{ji}$.
		Moreover, there exists a $C_A>0$ such that
		$\sum\limits_{i,j=1}^{n}a_{ij}(x)\xi_i\xi_j\geq C_A|\xi|^2$ for all $\xi\in\mathbb{R}^n$
		and for a.a.~$x$ in $\Omega$.
		\item The function $\varphi\in C^2(\mathbb{R})$ is monotonically increasing and $\varphi''$ is locally Lipschitz continuous.
	\end{enumerate}
\end{assumption}
Assumption \ref{assumption:standing} guarantees that
there exists a well-defined solution operator to the constraining PDE \eqref{eq:P_PDE}
and its adjoint form.

\begin{lemma}[The solution operator $S$]
	\label{lem:forwardSol}
	For every $u\in L^2(\Omega)$, there exists a unique solution $y=y(u)\in H^1_0(\Omega)\cap L^{\infty}(\Omega) $ of  \eqref{eq:P_PDE}.
	Thus, the map $S\colon L^2(\Omega)\to  H^1_0(\Omega)\cap L^{\infty}(\Omega)$, $ S(u)\coloneqq y(u)$ is well defined and there exist two constants $C,L>0$ such that,
	for all $u,u_1,u_2\in L^2(\Omega)$, it holds that
	\begin{align}
		\big\|S(u)\big\|_{H^1}+\big\|S(u)\big\|_{L^{\infty}}&\leq C \big\|f+u-\varphi(0)\big\|_{L^2},\label{eq:bounded}\\
		\big\|S(u_1)-S(u_2)\big\|_{H^1}+\big\|S(u_1)-S(u_2)\big\|_{L^{\infty}}&\leq L \big\|u_1-u_2\big\|_{L^2}.\label{eq:Lipschitz}
	\end{align}
\end{lemma}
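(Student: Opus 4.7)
The plan has three main parts: (i) unique solvability in $H^1_0(\Omega)$, (ii) the $L^\infty$-bound and the global estimate \eqref{eq:bounded}, and (iii) the Lipschitz estimate \eqref{eq:Lipschitz}. For part (i), I would fix $u\in L^2(\Omega)$ and recast \eqref{eq:P_PDE} as the problem of finding $y\in H^1_0(\Omega)$ with $T(y)=f+u$ in $H^{-1}(\Omega)$, where $T\colon H^1_0(\Omega)\to H^{-1}(\Omega)$, $T(y)\coloneqq Ay+\varphi(y)$. Assumption~\ref{assumption:standing}(3) yields continuity and $H^1_0$-coercivity of the bilinear form associated with $A$, while Assumption~\ref{assumption:standing}(4) makes $\varphi$ monotone and (locally) Lipschitz as a Nemytskii operator on $L^2$. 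Hence $T$ is hemicontinuous, strictly monotone, and coercive (note $\langle\varphi(y)-\varphi(0),y\rangle_{L^2}\geq0$ by monotonicity), so the theorem of Browder--Minty supplies a unique solution $y=S(u)\in H^1_0(\Omega)$.

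For the $H^1$-part of \eqref{eq:bounded}, I would test the weak equation with $y$ itself. Using $\varphi(y)y=(\varphi(y)-\varphi(0))y+\varphi(0)y$ together with monotonicity of $\varphi$, the nonlinear term gives a nonnegative contribution, and the ellipticity constant $C_A$ from Assumption~\ref{assumption:standing}(3) together with the Poincaré inequality yields
\begin{equation*}
    \|y\|_{H^1}\leq C\,\|f+u-\varphi(0)\|_{L^2}.
\end{equation*}
The $L^\infty$-estimate in \eqref{eq:bounded} is the technical core; here I would invoke a Stampacchia truncation argument in the spirit of \cite{GT01}: test the equation with $(y-k)^+$ for $k>0$, use $\varphi(y)(y-k)^+\geq\varphi(k)(y-k)^+$ by monotonicity to absorb the nonlinearity, and derive a decay estimate on the super-level sets $A(k)\coloneqq\{y>k\}$ of the form $\int_{A(k)}|\nabla(y-k)^+|^2\,dx\leq C\|(f+u-\varphi(0))\chi_{A(k)}\|_{L^2}\|(y-k)^+\|_{L^2}$, to which a standard iteration lemma (e.g.\ Stampacchia's lemma) can be applied. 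The same argument for $-y$ controls the essential infimum, producing a bound by $\|f+u-\varphi(0)\|_{L^2}$ and completing \eqref{eq:bounded}.

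For the Lipschitz estimate \eqref{eq:Lipschitz}, let $y_i\coloneqq S(u_i)$ and subtract the two weak equations to get
\begin{equation*}
    A(y_1-y_2)+\bigl(\varphi(y_1)-\varphi(y_2)\bigr)=u_1-u_2 \quad \text{in } H^{-1}(\Omega).
\end{equation*}
Testing with $y_1-y_2$ and using monotonicity of $\varphi$ to discard the nonlinear term, ellipticity of $A$ combined with Poincaré yields the $H^1$-Lipschitz estimate with a constant depending only on $C_A$ and $\Omega$. For the $L^\infty$-part, I would apply the Stampacchia argument once more, this time to the \emph{difference equation}: the nonlinear term $(\varphi(y_1)-\varphi(y_2))(y_1-y_2-k)^+$ is again nonnegative by monotonicity (on $\{y_1-y_2>k\}$ one has $y_1>y_2$, so $\varphi(y_1)\geq\varphi(y_2)$), so it can be dropped, and the same level-set iteration gives $\|y_1-y_2\|_{L^\infty}\leq C\|u_1-u_2\|_{L^2}$. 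I expect the $L^\infty$-bounds to be the main obstacle, since they do not follow from Sobolev embedding for $n\in\{2,3\}$ and genuinely require the Stampacchia/De Giorgi machinery; once this is set up, the $H^1$-bounds and the Lipschitz property are direct consequences of the monotone structure.
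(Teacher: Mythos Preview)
Your proof strategy is essentially the one underlying the reference the paper cites (Tr\"oltzsch, \cite{Troeltzsch05}), so in spirit you are doing exactly what the paper defers to. However, there is a genuine gap in part~(i) as written. Assumption~\ref{assumption:standing}(4) imposes \emph{no growth condition} on~$\varphi$: it allows, for instance, $\varphi(y)=y^3$ or $\varphi(y)=\exp(y)$ (and the paper's own numerical example uses $\varphi(y)=\kappa(y^3+\exp(\kappa y))$). For such $\varphi$, the Nemytskii operator does \emph{not} map $L^2(\Omega)$ into itself, nor $H^1_0(\Omega)$ into $H^{-1}(\Omega)$; your operator $T(y)=Ay+\varphi(y)$ is then not even well-defined on all of $H^1_0(\Omega)$, and the Browder--Minty theorem cannot be invoked. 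The sentence ``$\varphi$ \dots\ (locally) Lipschitz as a Nemytskii operator on $L^2$'' is precisely where this fails: local Lipschitz continuity of $\varphi$ on $\mathbb{R}$ does not transfer to the Nemytskii operator on $L^2$.

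The standard repair---and this is what Tr\"oltzsch does in the cited Theorems~4.7--4.8---is a truncation argument: replace $\varphi$ by a bounded, monotone cutoff $\varphi_M$ coinciding with $\varphi$ on $[-M,M]$; for $\varphi_M$ your Browder--Minty and Stampacchia arguments go through verbatim and yield a solution $y_M\in H^1_0\cap L^\infty$ with $\|y_M\|_{L^\infty}\leq C\|f+u-\varphi(0)\|_{L^2}$ \emph{independently of $M$}. Choosing $M$ larger than this bound, $y_M$ solves the original equation. Once this is in place, your parts (ii) and (iii) are fine: the Stampacchia iteration works because $L^2$-right-hand sides suffice in dimensions $n\in\{2,3\}$ (since $2>n/2$), and the monotonicity-based treatment of the difference equation for the Lipschitz estimate is correct.
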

\begin{proof}
	The claim is proved in \cite[Section 4]{Troeltzsch05} for Neumann and Robin boundary conditions. In particular, \cite[Theorems 4.4, 4.5]{Troeltzsch05} show the existence of solutions and the boundedness result \eqref{eq:bounded} for bounded $\varphi$ with $\varphi(0)=0$, and \cite[Theorems 4.7, 4.8]{Troeltzsch05} show that the latter assumptions can be actually dropped to obtain the claim. All results transfer to homogeneous Dirichlet boundary conditions, in which case the bilinear form is naturally coercive. The Lipschitz continuity \eqref{eq:Lipschitz} is proved in \cite[Theorem~4.16]{Troeltzsch05} and also carries over immediately.
\end{proof}
\begin{lemma}[The adjoint problem]
	\label{lem:adjointSol}
	Let $y\in H^1(\Omega)\cap L^\infty(\Omega)$.
	Then for every $v\in L^2(\Omega)$, there exists a unique solution $p=p(v)\in H^1_0(\Omega)\cap L^{\infty}(\Omega) $ to the problem
	\begin{equation}
		\Big(A +\varphi'(y)\Big){p}=v \text{ in } H^{-1}(\Omega).\label{eq:adjoint_v}
	\end{equation}
	Moreover, there exist two constants $C,L>0$, independent of $y$, such that
	\begin{align*}
		\big\|p(v)\big\|_{H^1}+\big\|p(v)\big\|_{L^{\infty}}&\leq C \big\|v\big\|_{L^2},\\
		\big\|p(v_1)-p(v_2)\big\|_{H^1}+\big\|p(v_1)-p(v_2)\big\|_{L^{\infty}}&\leq L \big\|v_1-v_2\big\|_{L^2},
	\end{align*}
	for all $v,v_1,v_2\in L^2(\Omega)$.
\end{lemma}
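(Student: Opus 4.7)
The key observation is that, because $y\in L^\infty(\Omega)$ and $\varphi\in C^2(\mathbb{R})$ is monotonically increasing, the coefficient $\varphi'(y)$ lies in $L^\infty(\Omega)$ and satisfies $\varphi'(y)\ge 0$ a.e.\ in $\Omega$. Consequently, $A+\varphi'(y)$ defines a bounded, symmetric, coercive bilinear form on $H^1_0(\Omega)\times H^1_0(\Omega)$ whose coercivity constant is \emph{inherited entirely from $A$} (adding a nonnegative zero-order term cannot hurt coercivity) and is therefore independent of $y$. Note also that the problem is \emph{linear} in $p$, which will trivialize the Lipschitz step.

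First I would apply the Lax--Milgram lemma to obtain a unique $p=p(v)\in H^1_0(\Omega)$ solving \eqref{eq:adjoint_v}, together with the estimate $\|p\|_{H^1}\le C\|v\|_{H^{-1}}\le C\|v\|_{L^2}$, where $C$ depends only on the ellipticity of $A$ and the embedding $L^2(\Omega)\hookrightarrow H^{-1}(\Omega)$. Next, to upgrade $p$ to $L^\infty(\Omega)$, I would run a Stampacchia truncation argument: testing \eqref{eq:adjoint_v} with $(p-k)^+$ for $k\ge 0$, the zero-order contributions $a_0\, p\, (p-k)^+$ and $\varphi'(y)\, p\, (p-k)^+$ are nonnegative on $\{p>k\}$ and may simply be discarded, leaving the textbook estimate $\int_\Omega |\nabla(p-k)^+|^2\,dx\le \int_\Omega v(p-k)^+\,dx$. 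Running the standard level-set iteration (exactly as invoked in the references behind Lemma~\ref{lem:forwardSol}) and repeating for $-p$ yields $\|p\|_{L^\infty}\le C\|v\|_{L^2}$ with $C$ depending only on $A$, $\Omega$ and $n$. Alternatively, one may cite \cite[Theorems 4.5, 4.7]{Troeltzsch05} directly, applied to the linear operator $A+\varphi'(y)\cdot$.

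Finally, by linearity the difference $p(v_1)-p(v_2)$ solves \eqref{eq:adjoint_v} with right-hand side $v_1-v_2$, so Steps~1--2 immediately give the Lipschitz estimate with a constant $L$ independent of $y$.

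The main obstacle I anticipate is Step~2: one has to verify that the constant in the $L^\infty$-bound is truly independent of $\|\varphi'(y)\|_{L^\infty}$ (and hence of $y$). This is where the sign of the zero-order term $\varphi'(y)\ge 0$ is crucial — it allows one to drop that term in the Stampacchia inequality rather than controlling it through its $L^\infty$-norm, which would otherwise produce a $y$-dependent constant and spoil the uniformity claimed in the lemma.
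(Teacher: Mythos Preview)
Your proposal is correct and follows essentially the same approach as the paper: absorb $\varphi'(y)\ge 0$ into a new linear elliptic operator $\widetilde A=A+\varphi'(y)$, invoke the standard existence/boundedness theory (the paper does this by appealing to the analogue of Lemma~\ref{lem:forwardSol} via \cite{Troeltzsch05}, while you spell out Lax--Milgram and the Stampacchia truncation), and observe that the zero-order term can be dropped in all estimates thanks to its sign, making the constants independent of $y$. Your explicit Stampacchia discussion in fact makes the $y$-independence of the $L^\infty$-constant more transparent than the paper's one-line remark.
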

\begin{proof}
	First, we know that $\varphi'\geq0$. Since $y$ is in $L^\infty(\Omega)$, continuity of $\varphi'$ yields that $\varphi'(y)\in L^\infty(\Omega)$.
	Accordingly, we can define the operator $\widetilde A = A + \varphi'(y)$ and $\widetilde\varphi \equiv 0$ and the adjoint problem \eqref{eq:adjoint_v} is obviously equivalent to $\widetilde A p  + \widetilde \varphi p = v$ and we can proceed analogously to the proof of Lemma \ref{lem:forwardSol}.
	In the proofs of the theorems from \cite{Troeltzsch05}, we notice that the constants can be chosen independently of $y$ because the part of $\varphi'$ can be dropped in any of the estimates due to coercivity of $A$.
\end{proof}

Further, the existence of at least one global minimizer can be obtained using standard arguments, cf.~\cite[Sec 4.4.2]{Troeltzsch05}.
\begin{lemma}[Existence of minimizers]
	\label{lem:minEx}
	Let Assumption \ref{assumption:standing} be satisfied. Then there exists at least one global minimizer for problem \eqref{eq:P}.
\end{lemma}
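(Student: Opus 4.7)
The plan is to apply the direct method of the calculus of variations to the reduced cost functional $\hat J(u)\coloneqq J(S(u),u)$ on $L^2(\Omega)$, where $S$ is the control-to-state map from Lemma \ref{lem:forwardSol}. Since $\hat J\ge 0$, the infimum is finite and we can pick a minimizing sequence $(u_n)\subset L^2(\Omega)$. The quadratic term $\tfrac{\nu}{2}\|u\|_{L^2}^2$ furnishes coercivity, so $(u_n)$ is bounded in $L^2(\Omega)$; passing to a subsequence, $u_n\rightharpoonup \bar u$ weakly in $L^2(\Omega)$.

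Next, define $y_n\coloneqq S(u_n)$. By \eqref{eq:bounded}, $(y_n)$ is bounded in $H^1_0(\Omega)\cap L^\infty(\Omega)$. Extracting a further subsequence and using Rellich--Kondrachov compactness on the Lipschitz domain $\Omega$, we obtain $y_n\rightharpoonup \bar y$ in $H^1_0(\Omega)$ and $y_n\to \bar y$ strongly in $L^2(\Omega)$ (and a.e.\ pointwise). The uniform $L^\infty$-bound combined with continuity of $\varphi$ allows dominated convergence to conclude $\varphi(y_n)\to \varphi(\bar y)$ in $L^2(\Omega)$. Passing to the limit in the weak form of the PDE then shows $A\bar y+\varphi(\bar y)=f+\bar u$, i.e.\ $\bar y=S(\bar u)$.

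Finally, strong $L^2$-convergence $y_n\to \bar y$ yields $\|y_n-y_d\|_{L^2}^2\to \|\bar y-y_d\|_{L^2}^2$. The functionals $u\mapsto \|u\|_{L^2}$ and $u\mapsto \|u\|_{L^1}$ are convex and continuous on $L^2(\Omega)$ (for the latter use the continuous embedding $L^2(\Omega)\hookrightarrow L^1(\Omega)$ since $\Omega$ is bounded), hence weakly lower semicontinuous, giving $\|\bar u\|_{L^2}\le \liminf\|u_n\|_{L^2}$ and $\|\bar u\|_{L^1}\le \liminf \|u_n\|_{L^1}$. Summing these three inequalities shows $\hat J(\bar u)\le \liminf \hat J(u_n)=\inf\hat J$, so $(\bar y,\bar u)$ is a global minimizer.

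The only delicate step is passing to the limit through the semilinear term $\varphi(y_n)$: a direct argument based solely on weak $L^2$-convergence of $u_n$ would not suffice if $\varphi$ were unbounded. The argument works here because \eqref{eq:bounded} in Lemma \ref{lem:forwardSol} provides a uniform $L^\infty$-bound on $y_n$, so the nonlinearity is effectively evaluated on a bounded set on which $\varphi$ is Lipschitz, and the compact embedding $H^1_0(\Omega)\hookrightarrow L^2(\Omega)$ upgrades weak to strong convergence of the states.
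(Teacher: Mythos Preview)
Your proof is correct and follows exactly the standard direct-method argument that the paper itself invokes (it does not give a proof but simply refers to \cite[Sec.~4.4.2]{Troeltzsch05}). The key ingredients you identify---coercivity from the $\tfrac{\nu}{2}\|u\|_{L^2}^2$ term, compactness of $H^1_0\hookrightarrow L^2$ to upgrade state convergence, the uniform $L^\infty$-bound from Lemma~\ref{lem:forwardSol} to handle $\varphi(y_n)$, and weak lower semicontinuity of the convex control terms---are precisely those underlying the cited reference.
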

%

Now, we can state the first-order necessary optimality condition.
\begin{theorem}[First-order optimality system] \label{thm:firstOrderConds}
	Let $\bar{u}\in L^2(\Omega)$ be a local minimizer of \eqref{eq:P}.
	Then there exist unique $\bar{y}, \bar{p} \in H^1_0(\Omega)\cap L^{\infty}(\Omega)$ and a $\bar{\lambda}\in \partial_C \|\bar{u}\|_{L^1(\Omega)}$ (where $\partial_C$ denotes Clarke's generalized differential; see \cite[Sec. 2.1]{Clarke90}) that satisfy
	\begin{equation}\label{eq:non-smooth_OS_lam}
		\begin{aligned}
			A \bar{y}+\varphi(\bar{y})&=f+\bar{u} &&\text{ in }   H^{-1}(\Omega), \\
			\Big(A +\varphi'(\bar{y})\Big)\bar{p}&=\bar{y}-y_d &&\text{ in }   H^{-1}(\Omega), \\
			\bar{p}+\nu \bar{u}+\mu \bar{\lambda}&= 0&& \text{ in } L^2(\Omega).
		\end{aligned}
	\end{equation}
\end{theorem}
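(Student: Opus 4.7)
The plan is to reduce \eqref{eq:P} to an unconstrained minimization problem on $L^2(\Omega)$ via the solution operator $S$ from Lemma~\ref{lem:forwardSol}, split the resulting reduced functional into a smooth and a non-smooth convex part, and apply Clarke's subdifferential calculus together with an adjoint-based computation of the smooth gradient.

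First I would define $\hat J\colon L^2(\Omega)\to\mathbb R$ by $\hat J(u)\coloneqq F(u)+G(u)$, where
\begin{equation*}
F(u)\coloneqq \frac12\bigl\|S(u)-y_d\bigr\|_{L^2}^2+\frac{\nu}{2}\bigl\|u\bigr\|_{L^2}^2,
\qquad
G(u)\coloneqq \mu\bigl\|u\bigr\|_{L^1}.
\end{equation*}
Since $\bar u$ is a local minimizer of \eqref{eq:P} and $\bar y=S(\bar u)$ is the unique associated state, $\bar u$ is a local minimizer of $\hat J$. Lemma~\ref{lem:forwardSol} (together with Assumption~\ref{assumption:standing} on the smoothness of $\varphi$) implies that $S$ is continuously Fréchet differentiable from $L^2(\Omega)$ into $H^1_0(\Omega)\cap L^\infty(\Omega)$, so $F$ is of class $C^1$. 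The functional $G$ is convex and, thanks to $\|u\|_{L^1}\le|\Omega|^{1/2}\|u\|_{L^2}$, globally Lipschitz on $L^2(\Omega)$, so its Clarke subdifferential coincides with the convex subdifferential. Applying Clarke's sum rule \cite[Prop.~2.3.3]{Clarke90} at the local minimum, one obtains
\begin{equation*}
0\in\partial_C\hat J(\bar u)\subseteq \nabla F(\bar u)+\mu\,\partial_C\|\bar u\|_{L^1},
\end{equation*}
which yields a $\bar\lambda\in\partial_C\|\bar u\|_{L^1}$ with $\nabla F(\bar u)+\mu\bar\lambda=0$.

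Next I would identify $\nabla F(\bar u)$ via the adjoint state. Let $\bar p\in H^1_0(\Omega)\cap L^\infty(\Omega)$ be the unique solution, guaranteed by Lemma~\ref{lem:adjointSol}, of
\begin{equation*}
\bigl(A+\varphi'(\bar y)\bigr)\bar p=\bar y-y_d\quad\text{in }H^{-1}(\Omega).
\end{equation*}
For $h\in L^2(\Omega)$, implicit differentiation of the state equation gives $S'(\bar u)h=z_h$, where $z_h\in H^1_0(\Omega)$ solves $(A+\varphi'(\bar y))z_h=h$. Using self-adjointness of $A$ and the defining equation for $\bar p$,
\begin{equation*}
\bigl\langle S(\bar u)-y_d,\,S'(\bar u)h\bigr\rangle_{L^2}
=\bigl\langle(A+\varphi'(\bar y))\bar p,\,z_h\bigr\rangle_{H^{-1},H^1_0}
=\bigl\langle\bar p,\,h\bigr\rangle_{L^2},
\end{equation*}
so that $\nabla F(\bar u)=\bar p+\nu\bar u$ in $L^2(\Omega)$.

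Combining the two identities yields $\bar p+\nu\bar u+\mu\bar\lambda=0$ in $L^2(\Omega)$. Uniqueness and the $H^1_0\cap L^\infty$ regularity of $\bar y$ and $\bar p$ follow directly from Lemmas~\ref{lem:forwardSol} and \ref{lem:adjointSol}. The main technical subtlety is the justification of the Clarke sum rule in this Hilbert-space setting and the identification of $\partial_C\|\cdot\|_{L^1}$ on $L^2(\Omega)$; this is handled by the global Lipschitz property of $G$ on $L^2(\Omega)$, which makes $G$ Clarke-regular and reduces $\partial_C G$ to the standard convex subdifferential, so that the inclusion in the sum rule is automatic and no additional qualification condition is needed.
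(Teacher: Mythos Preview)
Your proposal is correct and follows essentially the same approach as the paper, which merely sketches the argument by pointing to Clarke's calculus (\cite[Chap.~2]{Clarke90}), \cite{BonnansShapiro}, and \cite{CasasHerzogWachsmuth:2012:2}; you spell out the details of the reduced formulation, the sum rule, and the adjoint computation that the paper leaves implicit. One small remark: Lemma~\ref{lem:forwardSol} as stated only gives boundedness and Lipschitz continuity of $S$, not Fr\'echet differentiability, so you should cite the relevant differentiability result directly (e.g., \cite[Thm.~4.17]{Troeltzsch05}) rather than Lemma~\ref{lem:forwardSol} alone; also, Clarke-regularity of $G$ comes from its convexity rather than from Lipschitz continuity, though the latter is what puts $G$ in the scope of Clarke's theory in the first place.
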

\begin{proof}
	The result follows using Clarke's generalized differential to deal with the non-G\^ateaux-differentiability of the function $u\mapsto \|u\|_{L^1(\Omega)}$, and the abstract optimality conditions derived in \cite[Chap. 2]{Clarke90} and \cite{BonnansShapiro}, cf.~\cite{CasasHerzogWachsmuth:2012:2}.
\end{proof}

Because of the inclusion $\bar{\lambda}\in \partial_C \|\bar{u}\|_{L^1(\Omega)}$, the last equation in \eqref{eq:non-smooth_OS_lam} is not easy to be treated numerically.
However, a projection formula for $\bar{\lambda}$ has been proved in \cite[Cor. 3.2]{CasasHerzogWachsmuth:2012:2}
using an explicit representation of the subdifferential $\partial_C \|\bar{u}\|_{L^1(\Omega)}$, which carries over to our setting.
\begin{lemma}[Explicit form and fixed point equation for $\bar{\lambda}$]\label{lemmaproj} Assume the setting of Theorem \ref{thm:firstOrderConds}.
	Then the generalized derivative $\bar \lambda \in \partial_C \|\bar{u}\|_{L^1(\Omega)}$ satisfies
	\begin{equation}\label{eq:lambda_ident}
		\bar{\lambda}=\proj _{[-1,1]}\bigg(-\frac{\bar{p}}{\mu}\bigg)=\proj _{[-1,1]}\bigg(\frac{\nu}{\mu}\bar{u}+\bar{\lambda}\bigg)\in L^\infty(\Omega).
	\end{equation}
\end{lemma}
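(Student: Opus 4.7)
The plan is to translate the abstract inclusion $\bar\lambda\in\partial_C\|\bar u\|_{L^1(\Omega)}$ into a pointwise almost-everywhere statement and then perform a three-case analysis on the sign of $\bar u(x)$, exploiting the last line of the optimality system \eqref{eq:non-smooth_OS_lam}. Recalling (or citing \cite{CasasHerzogWachsmuth:2012:2} and \cite[Sec.~2.1]{Clarke90}) that Clarke's generalized differential of the $L^1$-norm is characterized pointwise a.e.\ by
\begin{equation*}
\bar\lambda(x)=\mathrm{sign}(\bar u(x)) \text{ if } \bar u(x)\neq 0, \qquad \bar\lambda(x)\in[-1,1] \text{ if } \bar u(x)=0,
\end{equation*}
I would immediately observe that $\bar\lambda\in L^\infty(\Omega)$ with $\|\bar\lambda\|_{L^\infty}\le 1$, and hence the projection identity $\bar\lambda=\proj_{[-1,1]}(\bar\lambda)$ holds trivially a.e.

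Next, the third line of \eqref{eq:non-smooth_OS_lam} can be rearranged into $\bar\lambda=\tfrac{\nu}{\mu}\bar u+\bar\lambda -\bigl(\tfrac{\bar p}{\mu}+\tfrac{\nu}{\mu}\bar u+\bar\lambda\bigr)=\tfrac{\nu}{\mu}\bar u+\bar\lambda$ modulo the identity $\bar p+\nu\bar u+\mu\bar\lambda=0$; equivalently $-\bar p/\mu=\tfrac{\nu}{\mu}\bar u+\bar\lambda$. Applying $\proj_{[-1,1]}$ to both sides immediately yields the second equality in \eqref{eq:lambda_ident}, so the core of the argument reduces to establishing the first equality $\bar\lambda=\proj_{[-1,1]}(-\bar p/\mu)$ pointwise a.e.

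For this I would distinguish three cases for a.a.\ $x\in\Omega$. If $\bar u(x)>0$, then $\bar\lambda(x)=1$, and the algebraic identity $-\bar p(x)/\mu=\tfrac{\nu}{\mu}\bar u(x)+1\ge 1$ forces $\proj_{[-1,1]}(-\bar p(x)/\mu)=1=\bar\lambda(x)$. The case $\bar u(x)<0$ is symmetric and yields $-\bar p(x)/\mu\le -1$ and $\bar\lambda(x)=-1$. Finally, if $\bar u(x)=0$, the algebraic identity degenerates to $-\bar p(x)/\mu=\bar\lambda(x)\in[-1,1]$, so the projection again leaves this value unchanged. Concatenating the three cases delivers the pointwise equality, which lifts to the stated identity in $L^\infty(\Omega)$.

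I do not expect a genuine obstacle; the only subtle point is the proper handling of the set $\{\bar u=0\}$, where the inclusion $\bar\lambda(x)\in[-1,1]$ is not uniquely determined by the subdifferential characterization alone but is pinned down by the third equation of \eqref{eq:non-smooth_OS_lam}. This is exactly what makes the fixed-point form $\bar\lambda=\proj_{[-1,1]}(\tfrac{\nu}{\mu}\bar u+\bar\lambda)$ meaningful, and it is the observation that we shall later exploit to design the regularized projection $P_\varepsilon$.
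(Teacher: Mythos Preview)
Your argument is correct and is precisely the standard case analysis underlying \cite[Cor.~3.2]{CasasHerzogWachsmuth:2012:2}, which is what the paper cites in lieu of a proof; the paper merely adds the remark that the second equality follows immediately from the last line of \eqref{eq:non-smooth_OS_lam}, exactly as you observe via $-\bar p/\mu=\tfrac{\nu}{\mu}\bar u+\bar\lambda$. In other words, you have supplied the details the paper delegates to the reference, and the approaches coincide.
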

Here, the second equality is an immediate result of the last condition in \eqref{eq:non-smooth_OS_lam}.
Note that Lemma \ref{lemmaproj} especially implies the uniqueness of the subderivative $\bar{\lambda}$ for the given minimizer $\bar u$.
Moreover, using the last condition in \eqref{eq:non-smooth_OS_lam} yields that $\bar{u}\in L^\infty(\Omega)$.
Furthermore, \eqref{eq:lambda_ident} can be used to
reduce the first-order optimality system \eqref{eq:non-smooth_OS_lam} solely to the state and adjoint variables, i.e., to the system
\begin{equation}\label{eq:non-smooth_OS_reduced}
	\begin{aligned}
		A {y}+\varphi({y})&=f-\frac{1}{\nu}\bigg({p}+\mu \proj _{[-1,1]}\bigg(-\frac{{p}}{\mu}\bigg)\bigg) && \text{ in }   H^{-1}(\Omega), \\
		\Big(A +\varphi'({y})\Big){p}&={y}-y_d &&\text{ in }   H^{-1}(\Omega).
	\end{aligned}
\end{equation}
This is a non-smooth system of coupled PDEs (the non-smoothness being introduced by the projection operator) with unknowns $y,p\in H_0^1(\Omega)$.

\section{Smoothed optimality systems}\label{sec:reg}
The optimality system \eqref{eq:non-smooth_OS_reduced} includes the Nemytskii operator corresponding to the (non-smooth) projection applied to the adjoint state.
Numerically, this non-smoothness and nonlinearity is the main difficulty to deal with.
In this section, we introduce a smoothing approach for the operator $\proj_{[-1,1]}$, show the existence of solutions to the smoothed system, and prove the convergence of solutions of the smoothed system to solutions of the original system \eqref{eq:non-smooth_OS_reduced} as the smoothing parameter tends to zero.

\subsection{Smoothing of the projection operator}\label{sec:reg:non}
\FloatBarrier
We regularize the projection on the real numbers that defines the Nemytskii operator by rewriting $\proj_{[-1,1]}=\frac{1}{2}(|x+1|-|x-1|)$ for $x\in \R$ and applying the $\varepsilon$-shifted square root regularization to the absolute value terms.
This leads to the smooth approximation $P_\varepsilon\colon \R\to\R$ of proj$_{[-1,1]}\colon \R\to\R$ given by
\begin{equation}\label{eq:smoothProj}
	P_\varepsilon(x)\coloneqq \frac{1}{2}\Big( \sqrt{(x+1)^2+\varepsilon}-\sqrt{(x-1)^2+\varepsilon}\Big)
\end{equation}
and its derivative
\begin{equation}\label{eq:smoothProjDer}
	P_{\varepsilon}'(x)=\frac{1}{2}\Bigg( \frac{x+1}{\sqrt{(x+1)^2+\varepsilon}}-\frac{x-1}{\sqrt{(x-1)^2+\varepsilon}}\Bigg)
\end{equation}
for $\varepsilon\ge0$.
Both functions are depicted in Figure \ref{Fig Pe}, and some of their properties are given in the following lemma.
\begin{figure}[t]
	\centering
	\begin{tikzpicture}[
		trim axis left,
		declare function = {
			Pe(\x,\myeps) = 0.5*(sqrt((\x+1)^2+\myeps) - sqrt((\x-1)^2+\myeps);
		},
		]
		\begin{axis}[
			scale only axis,
			height=4cm,
			width=0.45\textwidth,
			x label style = {at = {(axis description cs:1.0, 0.1)}, anchor = north},
			legend style={draw=white!15!black,legend cell align=left,at = {(0.05,0.95)},anchor=north west},
			cycle list={
				{black,mark={},dashed},
				{red, mark=square,mark repeat=50},
				{blue,mark=o,mark repeat=50},
				{teal,mark=diamond,mark repeat=50}
			},
			]
			\addplot+[thick, domain = -4:4, samples = 101]{Pe(x,0)};
			\addlegendentry{$P_0$};
			\pgfplotsinvokeforeach{0.5, 3, 10}{
				\addplot+[thick, domain = -4:4, samples = 101]{Pe(x,#1)};
				\addlegendentry{$P_{#1}$};
			}
		\end{axis}
	\end{tikzpicture}
	\hfill
	\begin{tikzpicture}[
		trim axis left,
		declare function = {
			dPe(\x,\myeps) = 0.5*((x+1)/sqrt((\x+1)^2+\myeps) - (x-1)/sqrt((\x-1)^2+\myeps);
		}
		]
		\begin{axis}[
			scale only axis,
			height=4cm,
			width=0.48\textwidth,
			x label style = {at = {(axis description cs:1.0, 0.1)}, anchor = north},
			legend style={draw=white!15!black,legend cell align=left,at = {(0.05,0.95)},anchor=north west},
			cycle list={
				{black,mark={},dashed},
				{red, mark=square,mark repeat=50},
				{blue,mark=o,mark repeat=50},
				{teal,mark=diamond,mark repeat=50},
			},
			]
			\addplot[black, thick, dashed, mark={}, domain = -4:4, samples = 101]{dPe(x,0)};
			\addlegendentry{$P'_0$};
			\pgfplotsinvokeforeach{0.5, 3, 10}{
				\addplot+[thick, domain = -4:4, samples = 101]{dPe(x,#1)};
				\addlegendentry{$P'_{#1}$};
			}
		\end{axis}
	\end{tikzpicture}
	\caption[Plot Pe]{$P_{\varepsilon}$ and $P'_{\varepsilon}$ for $\varepsilon \in \{0,0.5,3,10\}$.}
	\label{Fig Pe}
\end{figure}

\begin{lemma}[Properties of $P_{\varepsilon}$ and $P'_{\varepsilon}$]\label{lem:PProperties}
	$\;$ 
	\begin{enumerate}[leftmargin=0.5cm]
		
		\item\label{enum:smoothProjDer_i} For every $\varepsilon>0$, $P_{\varepsilon}\in C^{\infty}(\mathbb{R},[-1,1])$ with $\lim\limits_{x\to\pm\infty}P_{\varepsilon}(x)=\pm 1$ and $P_{\varepsilon}$ is strictly monotonically increasing. Additionally, $P_{\varepsilon}(x)\xrightarrow{\varepsilon\searrow0} \proj_{[-1,1]}(x)$ for all $x\in\mathbb{R}$.
		
		\item\label{enum:smoothProjDer_ii} For all $x$ in $\R$, the derivative $P'_{\varepsilon}(x)\in(0,1/\sqrt{1+\varepsilon}]$.
		Moreover,
		\begin{equation*}
			P_{\varepsilon}'(x) \xrightarrow{\varepsilon \searrow 0} \begin{cases}
				1 & \text{for } |x|<1,\\
				1/2 & \text{for } |x|=1,\\
				0 & \text{for } |x|>1.
			\end{cases}
		\end{equation*}
		
		\item\label{enum:smoothProjDer_iii} The associated Nemytskii operators $P_{\varepsilon},P_{\varepsilon}'\colon L^2(\Omega)\to L^{\infty}(\Omega)$ are well defined and $P_\varepsilon$ is globally Lipschitz as an operator mapping $L^2(\Omega)$ into itself.
		\item \label{enum:smoothProjDer_iiii}We have
		\begin{equation*}
			|\proj_{[-1,1]}(x)- P_\varepsilon(x)| \le \sqrt \varepsilon \quad \forall x \in \R
		\end{equation*}
		and 
		\begin{equation}\label{eq:nemistky_estimate}
			\|proj_{[-1,1]}(v)-P_\varepsilon(v)\|_{L^2(\Omega)} \le |\Omega| \sqrt \varepsilon \quad \forall v \in L^{2}(\Omega)
		\end{equation}
		for the respective Nemytskii operators.
	\end{enumerate}
\end{lemma}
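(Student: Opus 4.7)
The plan is to attack each of the four items in order, since the later items lean on the earlier computations.

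For item \ref{enum:smoothProjDer_i}, the smoothness of $P_\varepsilon$ is immediate because $(x\pm 1)^2 + \varepsilon > 0$ for $\varepsilon > 0$, so the square roots are $C^\infty$. To show $P_\varepsilon(\R)\subseteq[-1,1]$ I would use the identity
\begin{equation*}
P_\varepsilon(x) = \frac{(x+1)^2-(x-1)^2}{2\bigl(\sqrt{(x+1)^2+\varepsilon}+\sqrt{(x-1)^2+\varepsilon}\bigr)} = \frac{2x}{\sqrt{(x+1)^2+\varepsilon}+\sqrt{(x-1)^2+\varepsilon}},
\end{equation*}
and combine it with the triangle-inequality estimate $|2x|=|(x+1)+(x-1)|\le\sqrt{(x+1)^2+\varepsilon}+\sqrt{(x-1)^2+\varepsilon}$. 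The limits $P_\varepsilon(x)\to\pm 1$ for $x\to\pm\infty$ then follow from $\sqrt{(x\pm 1)^2+\varepsilon}=|x\pm 1|+o(1)$, and the pointwise limit $P_\varepsilon\to\proj_{[-1,1]}$ follows from $\sqrt{a^2+\varepsilon}\to|a|$ applied with $a=x\pm 1$. Strict monotonicity I would defer and read off $P_\varepsilon'>0$ from item \ref{enum:smoothProjDer_ii}.

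For item \ref{enum:smoothProjDer_ii}, positivity of $P_\varepsilon'$ is clearest by introducing $f(t) = t/\sqrt{t^2+\varepsilon}$ and noting $f'(t)=\varepsilon/(t^2+\varepsilon)^{3/2}>0$, so $f(x+1)>f(x-1)$ and $P_\varepsilon'(x)=\tfrac{1}{2}(f(x+1)-f(x-1))>0$. For the upper bound $1/\sqrt{1+\varepsilon}$, I would show $P_\varepsilon'$ attains its maximum at $x=0$: by evenness and by computing $P_\varepsilon''(x) = \tfrac{\varepsilon}{2}\bigl[((x+1)^2+\varepsilon)^{-3/2}-((x-1)^2+\varepsilon)^{-3/2}\bigr]$, which vanishes only at $x=0$ and has the right sign on either side; the value $P_\varepsilon'(0)=1/\sqrt{1+\varepsilon}$ then closes the bound. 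The three pointwise limits follow by separately sending $\varepsilon\searrow 0$ in each summand of \eqref{eq:smoothProjDer}, using that $t/\sqrt{t^2+\varepsilon}\to\operatorname{sign}(t)$ for $t\ne 0$ and equals $0$ when $t=0$.

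Item \ref{enum:smoothProjDer_iii} is then routine: since $P_\varepsilon$ takes values in $[-1,1]$ and $P_\varepsilon'$ in $(0,1/\sqrt{1+\varepsilon}]$, both Nemytskii operators map any $v\in L^2(\Omega)$ to an a.e.\ bounded function with explicit $L^\infty$-norm bound, so they are well defined as maps into $L^\infty(\Omega)$. Global Lipschitz continuity of $P_\varepsilon$ as a map $L^2\to L^2$ follows pointwise from the mean-value theorem and $|P_\varepsilon'|\le 1/\sqrt{1+\varepsilon}$, hence $\|P_\varepsilon(v_1)-P_\varepsilon(v_2)\|_{L^2}\le\tfrac{1}{\sqrt{1+\varepsilon}}\|v_1-v_2\|_{L^2}$.

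For item \ref{enum:smoothProjDer_iiii}, the key identity is
\begin{equation*}
\sqrt{a^2+\varepsilon}-|a| = \frac{\varepsilon}{\sqrt{a^2+\varepsilon}+|a|} \le \frac{\varepsilon}{\sqrt{\varepsilon}} = \sqrt{\varepsilon},
\end{equation*}
applied separately to $a=x+1$ and $a=x-1$ in the difference $\proj_{[-1,1]}(x)-P_\varepsilon(x) = \tfrac{1}{2}\bigl[(|x+1|-\sqrt{(x+1)^2+\varepsilon})-(|x-1|-\sqrt{(x-1)^2+\varepsilon})\bigr]$, yielding the pointwise bound $\sqrt{\varepsilon}$. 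Integrating the squared pointwise bound over $\Omega$ then gives the $L^2$ estimate \eqref{eq:nemistky_estimate}. The only mildly subtle step in the whole proof is the maximization argument establishing the sharp bound $1/\sqrt{1+\varepsilon}$ in item \ref{enum:smoothProjDer_ii}; everything else reduces to direct algebraic manipulation of the closed-form expressions \eqref{eq:smoothProj}--\eqref{eq:smoothProjDer}.
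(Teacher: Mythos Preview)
Your proposal is correct and largely matches the paper's proof. Items \ref{enum:smoothProjDer_ii} and \ref{enum:smoothProjDer_iii} are handled essentially identically (the paper also locates the maximum of $P_\varepsilon'$ at $x=0$ via $P_\varepsilon''$ and derives the limit as $\tfrac12(\operatorname{sign}(x+1)-\operatorname{sign}(x-1))$). There are, however, two places where you take a different and arguably cleaner path. For the boundedness $P_\varepsilon(\R)\subseteq[-1,1]$ in item \ref{enum:smoothProjDer_i}, the paper argues via monotonicity of $P_\varepsilon(x)$ in $\varepsilon$ (computing $\partial_\varepsilon P_\varepsilon(x)$ and using that $P_\varepsilon(x)\nearrow\proj_{[-1,1]}(x)$ for $x>0$), whereas you use the rationalized form together with the triangle inequality---a one-line algebraic argument. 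More notably, for item \ref{enum:smoothProjDer_iiii} the paper writes $|\proj_{[-1,1]}(x)-P_\varepsilon(x)|\le\int_0^\varepsilon|\partial_s P_s(x)|\,ds$, bounds the integrand by $\tfrac{1}{2\sqrt s}$, and integrates; your direct rationalization $\sqrt{a^2+\varepsilon}-|a|=\varepsilon/(\sqrt{a^2+\varepsilon}+|a|)\le\sqrt\varepsilon$ gives the same $\sqrt\varepsilon$ bound without calculus. The paper's integral approach has the advantage that the intermediate estimate on $\partial_\varepsilon P_\varepsilon$ is reused later in the proof of Theorem \ref{thm:convergencerate}, but for the lemma itself your route is more elementary.
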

\begin{proof}
	
	\underline{Point \ref{enum:smoothProjDer_i}.} The regularity and the pointwise approximation property are given by construction.
	For the boundedness, observe that $P_\varepsilon(x)<0$ ($=0$ or $>0$) if and only if $x<0$ ($x=0$ or $x>0$). For $x>0$, $P_\varepsilon(x)$ is monotonically decreasing in $\varepsilon$ (since
	$\partial_\varepsilon P_\varepsilon(x)=\frac{1}{4}[ ( (x+1)^2+\varepsilon)^{-\frac{1}{2}}-((x-1)^2+\varepsilon)^{-\frac{1}{2}}]<0$).
	Hence $P_\varepsilon(x)\nearrow $ $\proj_{[-1,1]}(x)$ as $\varepsilon \searrow 0$ for $x>0$, proving that $P_\varepsilon(x)\in(0,1]$ for $x>0$.
	The fact that $P_\varepsilon(x)\in[-1,0)$ for $x<0$ follows analogously.
	The limits of $P_\varepsilon$ as $x\to\pm\infty$ follow from the identity
	\begin{equation*}
		P_\varepsilon(x)=\frac{2x}{\sqrt{(x+1)^2+\varepsilon}+\sqrt{(x-1)^2+\varepsilon}}.
	\end{equation*}
	The monotonicity is an immediate consequence of the claim in \ref{enum:smoothProjDer_ii} proved below.
	
	\underline{Point \ref{enum:smoothProjDer_ii}.}
	Direct computations show that $P_\varepsilon'(x)\neq0 \ \forall x\in\R$ and $P'_\varepsilon(x)\xrightarrow{x\to\pm\infty}0$.
	Using the second derivative $P_\varepsilon''$, 
	we obtain that $P_\varepsilon'$ attains its unique maximum at $x=0$ with the value $P_\varepsilon'(0)=\frac{1}{\sqrt{1+\varepsilon}}$.
	Further, we have that $P'_\varepsilon(x)\to \frac{1}{2}(\mbox{sign}(x+1)-\mbox{sign}(x-1))$ pointwise as $\varepsilon\searrow 0$.
	
	\underline{Point \ref{enum:smoothProjDer_iii}.} Since the real-valued functions $P_\varepsilon$ and $P_\varepsilon'$ are bounded, the corresponding operators map into $L^\infty(\Omega)$. $P_\varepsilon\in C^\infty(\R)$ is globally Lipschitz,  since $P_\varepsilon'$ is bounded. 
	Hence the operator $P_\varepsilon \colon L^2(\Omega)\to L^2(\Omega)$ is globally Lipschitz because $\Omega$ is bounded.
	\underline{Point \ref{enum:smoothProjDer_iiii}.}
	Using the mean-value theorem we have for $x\in \R$
	
	\begin{equation}\label{eq: appendix scalar rate}
		|\proj_{[-1,1]}(x) - P_\varepsilon(x)|\leq \int_{0}^{\varepsilon}
		\bigg|\frac{\partial}{\partial \varepsilon}P_s(x) \bigg|ds
	\end{equation} 
	with the partial derivative
	\begin{equation*}
		\frac{\partial}{\partial \varepsilon} P_s(x)
		=
		\frac14 \bigg[
		\frac{1}{\sqrt{(x+1)^2+s}}
		-
		\frac{1}{\sqrt{(x-1)^2+s}}
		\bigg]
	\end{equation*}
	where we can estimate the right-hand side as
	\begin{equation}\label{eq: appendix scalar estimate rate}
		\bigg|
		\frac{\partial}{\partial \varepsilon} P_s(x)
		\bigg|
		\le
		\frac14 \bigg(
		\frac{1}{\sqrt{(x+1)^2 +s}}
		+
		\frac{1}{\sqrt{(x-1)^2 +s}}
		\bigg)
		=
		\frac{1}{2\sqrt s}
		.
	\end{equation}
	The result follows now by computing the integral in \eqref{eq: appendix scalar rate}. The estimate in \eqref{eq:nemistky_estimate} is a direct consequence of the previous estimate and the boundedness of the domain.
	
\end{proof}
\subsection{Solutions to the smoothed optimality systems}\label{sec:reg:opt}
Replacing $\proj _{[-1,1]}$ with $P_{\varepsilon}$ in \eqref{eq:smoothProj}, we obtain the smoothed optimality system
\begin{subequations}\label{eq:reg_OS}
	\begin{alignat}{3}
		A {y}+\varphi({y})&=f+{u} &&\text{ in }   H^{-1}(\Omega), \label{eq:reg_OS:state}\\
		\Big(A +\varphi'({y})\Big){p}&={y}-y_d &&\text{ in }   H^{-1}(\Omega), \label{eq:reg_OS:adjoint}\\
		{p}+\nu {u}+\mu P_{\varepsilon}\bigg(-\frac{{p}}{\mu}\bigg)&= 0&& {\text{ in } } L^2(\Omega),\label{eq:reg_OS:stationarity}
	\end{alignat}
\end{subequations}
for the triple $(u,y,p)\in L^2(\Omega)\times H_0^1(\Omega)\times H_0^1(\Omega)$ and its reduced form
\begin{equation}\label{eq:reg_OS_red}
	\begin{aligned}
		A {y}+\varphi({y})&=f-\frac{1}{\nu}\bigg({p}+\mu P_\varepsilon\bigg(-\frac{{p}}{\mu}\bigg)\bigg) &\text{ in }   H^{-1}(\Omega), \\
		\Big(A +\varphi'({y})\Big){p}&={y}-y_d &\text{ in }   H^{-1}(\Omega) \\
	\end{aligned}
\end{equation}
for ${y},{p}\in H_0^1(\Omega)$.
Note that, since the real function $p\in  \mathbb{R}\mapsto P_{\varepsilon}(-\frac{p}{\mu})$ is monotonically decreasing, the reduced system is a non-monotone, semi-linear system, i.e., we cannot prove the existence of solutions applying techniques from the theory of monotone operators.
Instead, we will construct a smooth optimal control problem, whose optimality system coincides with \eqref{eq:reg_OS}.
The auxiliary optimality system is obtained by replacing the non-smooth $L^1$-term in the cost function of our original problem with an appropriate differentiable.
This approach is nontrivial because the regularization operator is applied to the adjoint state $p$, while the cost functional of the regularized, auxiliary problem can only include terms in $u$ and $y$.

Thus, we begin with the ansatz
\begin{equation}
	\label{eq:smoothed_stationarity_condition}
	J_\varepsilon(u)=\frac{1}{2}\bigr\|S(u)-y_d\bigr\|_{L^2}^2+\frac{\nu}{2}\bigr\|u\bigr\|_{L^2}^2+\mu\regOp(u)
\end{equation}
with a differentiable functional $\regOp \colon L^2(\Omega)\to\mathbb{R}$.
Provided some assumptions on $\regOp$, which we will specify later,
one can find a necessary first-order optimality system for minimizers of the functional $J_\varepsilon$ over controls $u \in L^2(\Omega)$ using standard techniques.
The optimality system unsurprisingly consists of the lines \eqref{eq:reg_OS:state}-\eqref{eq:reg_OS:adjoint} and an ($L^2$-gradient) stationarity condition, which reads as
\begin{equation}\label{ref}
	{p}+\nu {u}+ \mu \nabla\regOp({u}) = 0
	\quad \text{in } L^2(\Omega)
	.
\end{equation}
Accordingly, the smoothed system \eqref{eq:reg_OS} and the optimality system of \eqref{eq:smoothed_stationarity_condition} coincide in the first two equations.
We now construct a $\regOp$ such that the relation
\begin{equation}
	\label{eq:smooth_counterpart_projection}
	\nabla\regOp({u})=P_{\varepsilon}\bigg(-\frac{{p}}{\mu}\bigg)
\end{equation}
is satisfied for $u$ and $p$ satisfying \eqref{ref} and for $\varepsilon>0$, establishing a clear connection between \eqref{ref} and the smoothed stationarity condition \eqref{eq:reg_OS:stationarity}.
Note that \eqref{eq:smooth_counterpart_projection} is a smooth counterpart to the explicit form of the Clarke-subderivative $\bar{\lambda}$ in \eqref{eq:lambda_ident}. 
Using \eqref{ref}, we obtain that \eqref{eq:smooth_counterpart_projection} holds if $\regOp$ satisfies the fixed-point condition
\begin{equation}\label{eq:fixed point _cond}
	\nabla\regOp({u})=P_{\varepsilon}\bigg(-\frac{{p}}{\mu}\bigg)=P_{\varepsilon}\bigg(\nabla\regOp({u})+\frac{\nu}{\mu}{u}
	\bigg).
\end{equation}

We begin showing the existence of such a functional $\regOp$ starting from \eqref{eq:fixed point _cond} by proving the existence of a scalar function $\regg_\varepsilon$ that satisfies the scalar counterpart to \eqref{eq:fixed point _cond}, that is
\begin{equation}\label{eq:defdeps}
	\regg_\varepsilon(x)=P_{\varepsilon}\bigg(\regg_\varepsilon(x)+\frac{\nu}{\mu} x\bigg)
\end{equation}
for every $x\in\R$ and for $\varepsilon>0$. Here, $\regg_\varepsilon$ plays the role of
$\nabla\regOp(\bar{u})$ in a pointwise sense.
Once $\regg_\varepsilon$ in $\mathbb{R}$ is obtained, $\regOp(\bar{u})$ is given
as the Nemytskii operator of the antiderivative of $\regg_\varepsilon$.

\begin{lemma}[Existence and properties of $\regg_\varepsilon$]\label{lem:fixPoiExistence}
	For every $\varepsilon>0$, there exists a unique, strictly monotonically increasing function $\regg_\varepsilon\in C^\infty(\mathbb{R}, [-1,1])$ satisfying \eqref{eq:defdeps} for all $x\in\mathbb{R}$.
	It has the following properties:
	\begin{enumerate}
		\item\label{eq:fixPoiExistence:1} $\lim\limits_{x\to\pm \infty} \regg_\varepsilon(x)=\pm 1$;
		\item\label{eq:fixPoiExistence:2} $\regg_\varepsilon(x)=0$ if and only if $x=0$;
		\item\label{eq:fixPoiExistence:3} $\regg_\varepsilon'(x)>0$ for all $x\in \R$;
		\item\label{eq:fixPoiExistence:4} $\regg_\varepsilon'$ is bounded and therefore $\regg_\varepsilon$ is globally Lipschitz continuous.
	\end{enumerate}
\end{lemma}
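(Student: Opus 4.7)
The plan is to construct $\regg_\varepsilon$ pointwise via a fixed-point/implicit-function argument, then read off the advertised properties from those of $P_\varepsilon$ established in Lemma \ref{lem:PProperties}.

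First, I would fix $x \in \R$ and consider the real function $F_x\colon\R\to\R$, $F_x(y)\coloneqq y - P_\varepsilon\bigl(y + \tfrac{\nu}{\mu} x\bigr)$. By Lemma \ref{lem:PProperties}, $P_\varepsilon$ takes values in $(-1,1)$ and $0 < P_\varepsilon'(\cdot) \le 1/\sqrt{1+\varepsilon} < 1$ for $\varepsilon>0$. Hence
\begin{equation*}
    F_x'(y) = 1 - P_\varepsilon'\bigl(y + \tfrac{\nu}{\mu} x\bigr) \;\ge\; 1 - \tfrac{1}{\sqrt{1+\varepsilon}} \;>\; 0,
\end{equation*}
so $F_x$ is strictly increasing, and since $P_\varepsilon$ is bounded we have $F_x(y)\to \pm\infty$ as $y\to\pm\infty$. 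The intermediate value theorem then yields a unique zero, which I call $\regg_\varepsilon(x)$. This establishes existence and uniqueness, and the bound $|\regg_\varepsilon(x)|<1$ follows since $\regg_\varepsilon(x)=P_\varepsilon(\cdots)\in(-1,1)$.

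For smoothness and monotonicity I would invoke the implicit function theorem on $G(x,y)\coloneqq y - P_\varepsilon(y+\tfrac{\nu}{\mu}x)$. Since $G\in C^\infty(\R^2)$ with $\partial_y G \ge 1-1/\sqrt{1+\varepsilon} > 0$ everywhere, $\regg_\varepsilon \in C^\infty(\R)$ and
\begin{equation*}
    \regg_\varepsilon'(x) = \frac{\frac{\nu}{\mu}\,P_\varepsilon'\!\bigl(\regg_\varepsilon(x)+\tfrac{\nu}{\mu}x\bigr)}{1 - P_\varepsilon'\!\bigl(\regg_\varepsilon(x)+\tfrac{\nu}{\mu}x\bigr)} \;>\; 0,
\end{equation*}
which gives item \ref{eq:fixPoiExistence:3}. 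Using $0<P_\varepsilon'\le 1/\sqrt{1+\varepsilon}$ again shows the right-hand side is bounded above by $\frac{\nu}{\mu}(\sqrt{1+\varepsilon}-1)^{-1}$, proving the boundedness of $\regg_\varepsilon'$ and hence global Lipschitz continuity of $\regg_\varepsilon$, i.e., item \ref{eq:fixPoiExistence:4}.

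For item \ref{eq:fixPoiExistence:2}, observe that $P_\varepsilon$ is odd (a direct computation from \eqref{eq:smoothProj}), in particular $P_\varepsilon(0)=0$. Thus $y=0$ solves $y=P_\varepsilon(y+0)$, and uniqueness yields $\regg_\varepsilon(0)=0$. Conversely, if $\regg_\varepsilon(x)=0$ then $0 = P_\varepsilon(\tfrac{\nu}{\mu}x)$, and by strict monotonicity of $P_\varepsilon$ (Lemma \ref{lem:PProperties}.\ref{enum:smoothProjDer_i}) this forces $x=0$. Finally, for the limits in item \ref{eq:fixPoiExistence:1}, since $\regg_\varepsilon$ is bounded in $(-1,1)$ and monotonically increasing, the limits $L_{\pm}\coloneqq \lim_{x\to\pm\infty}\regg_\varepsilon(x)$ exist in $[-1,1]$. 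Because $\regg_\varepsilon$ is bounded, the argument $\regg_\varepsilon(x)+\tfrac{\nu}{\mu}x \to \pm\infty$ as $x\to\pm\infty$, so passing to the limit in the defining equation $\regg_\varepsilon(x)=P_\varepsilon(\regg_\varepsilon(x)+\tfrac{\nu}{\mu}x)$ and using $\lim_{z\to\pm\infty}P_\varepsilon(z)=\pm 1$ from Lemma \ref{lem:PProperties}.\ref{enum:smoothProjDer_i} gives $L_\pm=\pm 1$.

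The only subtle point is controlling the denominator $1-P_\varepsilon'$ in the expression for $\regg_\varepsilon'$; the uniform bound $P_\varepsilon' \le 1/\sqrt{1+\varepsilon}<1$ (which is precisely item \ref{enum:smoothProjDer_ii} of Lemma \ref{lem:PProperties}) is what makes both the implicit-function argument and the Lipschitz estimate go through, though the constant deteriorates as $\varepsilon \searrow 0$, consistent with the limiting non-smoothness of $\proj_{[-1,1]}$.
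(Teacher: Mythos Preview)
Your proof is correct and follows essentially the same route as the paper: both hinge on the uniform bound $P_\varepsilon'\le 1/\sqrt{1+\varepsilon}<1$, then use the implicit function theorem on $G(x,y)=y-P_\varepsilon(y+\tfrac{\nu}{\mu}x)$ to obtain $C^\infty$-regularity and the explicit derivative formula, from which items \ref{eq:fixPoiExistence:1}--\ref{eq:fixPoiExistence:4} are read off exactly as you do. The only genuine difference is the existence step: the paper obtains the fixed point via the Banach contraction principle applied to $d\mapsto P_\varepsilon(d+\tfrac{\nu}{\mu}x)$, whereas you use the intermediate value theorem on the strictly increasing function $y\mapsto y-P_\varepsilon(y+\tfrac{\nu}{\mu}x)$; both are equivalent here and rest on the same derivative bound.
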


\begin{proof}
	For $x\in\mathbb{R}$, we define the  function $F_x\colon\mathbb{R}\to\mathbb{R}$ via $ F_x(\regg)\coloneqq P_{\varepsilon}(\regg+\frac{\nu}{\mu} x)$ and, using Point \ref{enum:smoothProjDer_ii} in Lemma \ref{lem:PProperties}, we obtain the bound
	\begin{equation*}
		|F'_x(\regg)|=|P_\varepsilon'(\regg+\frac{\nu}{\mu} x)|\leq\big\|P'_{\varepsilon}\big\|_{\infty} = \frac{1}{\sqrt{1+\varepsilon}}<1.
	\end{equation*}
	Therefore, the mean value theorem yields  that
	\begin{equation*}
		|F_x(\regg_1)-F_x(\regg_2)|\leq |F'_x(\bar{\regg})||\regg_1-\regg_2|\leq \frac{1}{\sqrt{1+\varepsilon}}|\regg_1-\regg_2|
	\end{equation*}
	for all $x,d_1,d_2\in\mathbb{R}$ and $\varepsilon>0$. Hence, the Banach fixed-point theorem yields a unique fixed point $\regg_\varepsilon(x)$ satisfying \eqref{eq:defdeps}, which defines the unique function $\regg_\varepsilon$.
	
	The boundedness of $\regg_\varepsilon(x) \in [-1,1]$ follows from the fixed-point equation \eqref{eq:defdeps} and the boundedness of $P_{\varepsilon}$.
	Using the boundedness of $\regg_\varepsilon$, we see that $\regg_\varepsilon(x)+\frac{\nu}{\mu} x\xrightarrow{x\to\pm\infty}\pm \infty$, which implies that $ \regg_\varepsilon(x)=P_{\varepsilon}\big(\regg_\varepsilon(x)+\frac{\nu}{\mu} x\big)\to\pm1$ as $x\to \pm \infty$ by Lemma \ref{lem:PProperties} Point \ref{enum:smoothProjDer_i}.
	
	To obtain the regularity of $\regg_\varepsilon$, we apply the implicit function theorem to the smooth function $I(x,\regg)\coloneqq \regg-P_{\varepsilon}\big(\regg+\frac{\nu}{\mu} x\big)$ for $(x,\regg)\in \mathbb{R}\times[-1,1]$.
	Indeed, by construction, we have that $I(x,\regg_\varepsilon(x))=0$ for all $x\in\mathbb{R}$.
	For the partial derivative of $I$ in $\regg$, we have that
	\begin{equation*}
		\partial_d I(x,\regg)=1-P_\varepsilon'\bigg( \regg+\frac{\nu}{\mu} x\bigg) >1-\frac{1}{\sqrt{1+\varepsilon}}>0
	\end{equation*}
	for all $x$, $\regg \in\R$.
	For every arbitrary $x_0\in \R$, we obtain a $\delta>0$ and a unique, smooth function $s\colon(x_0-\delta,x_0+\delta)\to[-1,1]$ with $I(x,s(x))=0$ for all  $x\in (x_0-\delta,x_0+\delta)$ from the implicit function theorem.
	Because of the uniqueness of $\regg_\varepsilon$, we have $\regg_\varepsilon=s$ on $(x_0-\delta,x_0+\delta)$. Since $x_0$ is arbitrarily chosen, we get $\regg_\varepsilon\in C^\infty(\mathbb{R},[-1,1])$.
	It remains to show points \ref{eq:fixPoiExistence:2}, \ref{eq:fixPoiExistence:3} and \ref{eq:fixPoiExistence:4} and the monotonicity of $\regg_\varepsilon$.
	Differentiation of \eqref{eq:defdeps} gives
	\begin{equation}\label{s'}
		\regg_\varepsilon'(x)=\frac{\nu}{\mu}\ P_\varepsilon'\bigg( \regg_\varepsilon(x)+\frac{\nu}{\mu} x\bigg)\biggl( 1- \ P_\varepsilon'\bigg( \regg_\varepsilon(x)+\frac{\nu}{\mu} x\bigg) \biggr)^{-1}.
	\end{equation}
	Note that $P_\varepsilon'$ maps into $(0,1)$ and is bounded away from $1$ for fixed $\varepsilon$ by Point \ref{enum:smoothProjDer_ii} in Lemma \ref{lem:PProperties}, this immediately implies the boundedness and the positivity of $\regg_\varepsilon'$ and hence the strict monotonicity of $\regg_\varepsilon$.
	This implies that $\regg_\varepsilon$ has exactly one root, which has to be at $x=0$ because $\regg_\varepsilon(x)=0$ implies $P_\varepsilon(\frac{\nu}{\mu}x)=0$, which is exactly the case when $x=0$.
\end{proof}

\begin{lemma}[The antiderivative $\reg_\varepsilon$]\label{eq:antider_S}
	Let $\reg_\varepsilon(x)\coloneqq\int\limits_0^x \regg_\varepsilon(s) \mathrm d s$ for $x\in\mathbb{R}$. Then $\reg_\varepsilon$ is bounded from below by $0$, strictly convex, non-expansive and $\reg_\varepsilon(0)=0$.
\end{lemma}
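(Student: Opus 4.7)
The plan is to derive each of the four properties of $\reg_\varepsilon$ directly from the corresponding properties of $\regg_\varepsilon$ established in Lemma \ref{lem:fixPoiExistence}, using only the fundamental theorem of calculus and elementary sign/monotonicity arguments.

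First I would dispose of the trivial claim: $\reg_\varepsilon(0) = \int_0^0 \regg_\varepsilon(s)\,\mathrm ds = 0$. Next, to establish strict convexity, I would observe that by construction $\reg_\varepsilon'(x) = \regg_\varepsilon(x)$, and Point \ref{eq:fixPoiExistence:3} of Lemma \ref{lem:fixPoiExistence} gives $\regg_\varepsilon'(x) > 0$ for every $x \in \R$. Hence $\reg_\varepsilon'' > 0$ on all of $\R$, which yields strict convexity.

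For non-expansiveness (interpreted as $1$-Lipschitz continuity on $\R$), I would combine the bound $\regg_\varepsilon(x) \in [-1,1]$ from Lemma \ref{lem:fixPoiExistence} with the mean value theorem, giving
\begin{equation*}
|\reg_\varepsilon(x) - \reg_\varepsilon(y)| \le \sup_{s\in \R}|\regg_\varepsilon(s)|\,|x-y| \le |x-y|
\quad \text{for all } x,y \in \R.
\end{equation*}
Finally, the non-negativity of $\reg_\varepsilon$ follows from combining Points \ref{eq:fixPoiExistence:2} and \ref{eq:fixPoiExistence:3} of Lemma \ref{lem:fixPoiExistence}: strict monotonicity of $\regg_\varepsilon$ together with $\regg_\varepsilon(0)=0$ implies $\regg_\varepsilon(s) > 0$ for $s > 0$ and $\regg_\varepsilon(s) < 0$ for $s<0$. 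Consequently, for $x \ge 0$ the integrand in $\reg_\varepsilon(x) = \int_0^x \regg_\varepsilon(s)\,\mathrm ds$ is non-negative, while for $x < 0$ we rewrite $\reg_\varepsilon(x) = -\int_x^0 \regg_\varepsilon(s)\,\mathrm ds$ and the integrand is now non-positive, producing a non-negative value in either case.

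I do not expect a real obstacle here: every claim reduces to an elementary consequence of the properties of $\regg_\varepsilon$ proved in the previous lemma. The only place that requires a mild care is making the sign analysis for the lower bound precise on both sides of $0$, which is handled by the rewriting above.
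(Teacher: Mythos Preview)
Your proof is correct and follows essentially the same approach as the paper: strict convexity from $\reg_\varepsilon''=\regg_\varepsilon'>0$, non-expansiveness from $|\regg_\varepsilon|\le 1$, and non-negativity from the sign of $\regg_\varepsilon$ on either side of $0$ together with $\reg_\varepsilon(0)=0$. The only cosmetic difference is that the paper phrases the lower bound via monotonicity of $\reg_\varepsilon$ (decreasing for $x<0$, increasing for $x>0$) rather than your direct integral sign argument, but the content is identical.
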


\begin{proof}
	By definition of $\reg_\varepsilon$ and Lemma \ref{lem:fixPoiExistence}, we have $\reg_\varepsilon''(x)=\regg_\varepsilon'(x)>0$, hence $\reg_\varepsilon$ is strictly convex.
	Further, the monotonicity of $\regg_\varepsilon$ and \eqref{eq:regOpDef} in Lemma \ref{lem:fixPoiExistence} imply that $\reg_\varepsilon'(x)=\regg_\varepsilon(x) <0$ ($=0$ or $>0$) if and only if $x<0$ ($x=0$ or $x>0$). Thus $\reg_\varepsilon$ is strictly decreasing for $x<0$ and strictly increasing for $x>0$. Since $\reg_\varepsilon$ is continuous with $\reg_\varepsilon(0)=0$ by construction, $\reg_\varepsilon$ is bounded from below by $0$.
	Global Lipschitz continuity with constant $L=1$ is a direct consequence of the boundedness of $\regg_\varepsilon$.
\end{proof}

Now, the goal is to extend the constructed real functions $\regg_\varepsilon$
and $\reg_\varepsilon$ into functional operators on the space $L^2(\Omega)$.
To this end, we define the operator $\regOp\colon L^2(\Omega)\to \mathbb{R}$ as
\begin{equation}\label{eq:regOpDef}
	\regOp(u)\coloneqq \int_\Omega \reg_\varepsilon(u(x))\ \mathrm  d x,
\end{equation}
whose properties are studied in the following lemma.
\begin{lemma}[Properties of $\regOp$]\label{lemma S Nem }
	The operator $\regOp$ is strictly convex, weakly lower semi-continuous, bounded from below, globally Lipschitz continuous and continuously Fr\'echet differentiable at $u\in L^2(\Omega)$ with
	\begin{equation}\label{eq:regOpDerDef}
		\regOp'(u)h=\int_\Omega   \reg_\varepsilon'(u(x))h(x)\ \mathrm dx =\int_\Omega   \regg_\varepsilon(u(x))h(x)\ \mathrm dx\quad \forall h\in L^2(\Omega).
	\end{equation}
	Moreover, $\regOp(0)=0$ and $\reg_\varepsilon'\colon L^2(\Omega)\to L^2(\Omega)'$ is Lipschitz continuous.
\end{lemma}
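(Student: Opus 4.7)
The plan is to transfer the pointwise properties of the scalar function $\reg_\varepsilon$ established in Lemma \ref{eq:antider_S} and of $\reg_\varepsilon' = \regg_\varepsilon$ from Lemma \ref{lem:fixPoiExistence} to the Nemytskii-type integral functional $\regOp$, exploiting that $\Omega$ is bounded. The argument is a sequence of routine estimates; the only slightly delicate point is the $L^2$-remainder bound needed for Fréchet (as opposed to Gâteaux) differentiability.

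The elementary claims follow directly. Evaluating at $u \equiv 0$ gives $\regOp(0) = \int_\Omega \reg_\varepsilon(0)\,dx = 0$, and non-negativity of $\reg_\varepsilon$ yields $\regOp \ge 0$ on $L^2(\Omega)$. For strict convexity, I would pick $u, v \in L^2(\Omega)$ with $u \ne v$ and $t \in (0,1)$; the set $\{u \ne v\}$ has positive measure, and on it the pointwise strict convexity of $\reg_\varepsilon$ produces a strict pointwise inequality that survives integration. Global Lipschitz continuity follows from non-expansiveness of $\reg_\varepsilon$ and the Cauchy-Schwarz inequality,
\begin{equation*}
|\regOp(u) - \regOp(v)| \le \int_\Omega |\reg_\varepsilon(u) - \reg_\varepsilon(v)|\,dx \le \int_\Omega |u - v|\,dx \le |\Omega|^{1/2} \|u - v\|_{L^2}.
\end{equation*}

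Next I would establish Fréchet differentiability with the claimed gradient. Because $|\regg_\varepsilon| \le 1$ and $\Omega$ is bounded, the map $h \mapsto \int_\Omega \regg_\varepsilon(u) h\,dx$ defines a bounded linear functional on $L^2(\Omega)$. Since $\reg_\varepsilon \in C^2(\R)$ with $\reg_\varepsilon'' = \regg_\varepsilon'$ bounded on $\R$ (Lemma \ref{lem:fixPoiExistence}, item \ref{eq:fixPoiExistence:4}), the Lagrange form of the second-order Taylor remainder yields the pointwise bound
\begin{equation*}
\bigl| \reg_\varepsilon(u(x) + h(x)) - \reg_\varepsilon(u(x)) - \regg_\varepsilon(u(x))\,h(x) \bigr| \le \tfrac{1}{2} \|\regg_\varepsilon'\|_\infty h(x)^2,
\end{equation*}
and integrating produces an $O(\|h\|_{L^2}^2)$ global remainder, proving Fréchet differentiability with the stated formula. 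For the Lipschitz continuity of $\regOp' \colon L^2(\Omega) \to L^2(\Omega)'$, I would apply the Lipschitz continuity of $\regg_\varepsilon$ with some constant $L$ (Lemma \ref{lem:fixPoiExistence}, item \ref{eq:fixPoiExistence:4}) together with Cauchy-Schwarz to obtain
\begin{equation*}
|(\regOp'(u_1) - \regOp'(u_2)) h| \le \int_\Omega |\regg_\varepsilon(u_1) - \regg_\varepsilon(u_2)|\,|h|\,dx \le L \|u_1 - u_2\|_{L^2} \|h\|_{L^2}
\end{equation*}
uniformly in $h \in L^2(\Omega)$, which gives the desired operator-norm bound.

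Finally, weak lower semi-continuity is a standard consequence of convexity combined with norm-continuity via Mazur's lemma. The main obstacle, as flagged at the outset, is ensuring that the Taylor remainder is genuinely $o(\|h\|_{L^2})$: this requires the pointwise $L^\infty$-bound on $\regg_\varepsilon'$ rather than merely the Lipschitz constant of $\regg_\varepsilon$, but both are supplied by Lemma \ref{lem:fixPoiExistence}, so no new ingredient is needed.
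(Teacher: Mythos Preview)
Your proposal is correct and follows essentially the same route as the paper: pointwise properties of $\reg_\varepsilon$ and $\regg_\varepsilon$ are transferred to $\regOp$ via integration, Lipschitz continuity uses the $L^2\hookrightarrow L^1$ embedding, the Fr\'echet remainder is shown to be $O(\|h\|_{L^2}^2)$, and weak lower semi-continuity is deduced from convexity plus continuity. The only cosmetic difference is that the paper bounds the remainder via the first-order mean value theorem combined with the Lipschitz constant $L_{\regg_\varepsilon}$ of $\regg_\varepsilon$, whereas you invoke the second-order Taylor form with $\|\regg_\varepsilon'\|_\infty$; since $\regg_\varepsilon\in C^1$ these two constants coincide, so your closing remark distinguishing them is unnecessary.
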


\begin{proof}
	Convexity and boundedness of $\regOp$ follow from the properties of $ \reg_\varepsilon$ proven in Lemma \ref{eq:antider_S}.
	Lipschitz continuity is a consequence of the Lipschitz continuity of $\reg_\varepsilon$ and the continuous embedding of $L^2(\Omega)$ into $L^1(\Omega)$.
	Similarly, $\regOp(0)=0$ is immediately clear from Point \ref{eq:fixPoiExistence:2} in Lemma \ref{lem:fixPoiExistence}.
	
	Let us now focus on differentiability and semi-continuity.
	We begin with the differentiability.
	First, the boundedness of $ \reg_\varepsilon'= \regg_\varepsilon$ by $\pm1$ from Lemma \ref{lem:fixPoiExistence} implies that
	\begin{equation}\label{eq:S_bounded}
		\| \reg_\varepsilon'\circ u\|_{L^2}\leq |\Omega| <\infty
	\end{equation}
	for all $u\in L^2(\Omega)$, i.e., the operator is well defined.
	Now, for $u,h\in L^2(\Omega)$, we have
	\begin{equation}\label{eq:S_Frechet}
		\medmuskip=-1mu
		\thinmuskip=-1mu
		\thickmuskip=-1mu
		\nulldelimiterspace=0.9pt
		\scriptspace=0.9pt
		\arraycolsep0.9em
		\bigg|\regOp(u+h)-\regOp(u)-\regOp'(u)h \bigg|=\bigg|\int_\Omega { \reg}_\varepsilon\big(u(x)+h(x)\big)-{ \reg}_\varepsilon\big(u(x)\big)-{ \reg'_\varepsilon}\big(u(x)\big)h\ \mathrm  dx\bigg|.
	\end{equation}
	For almost every $x\in\Omega$, the mean value theorem yields $\delta(x)\in (0,1)$ such that
	\begin{equation}
		{ \reg_\varepsilon}\big(u(x)+h(x)\big)-{ \reg_\varepsilon}\big(u(x)\big)= \reg_\varepsilon'\big(u(x)+\delta(x)h(x)\big)h(x).\nonumber
	\end{equation}
	Inserting this in \eqref{eq:S_Frechet}
	and denoting by $L_{\regg_\varepsilon}$ the Lipschitz constant of $\regg_{\varepsilon}$,
	we obtain that
	\begin{align*}
		\bigg|\regOp(u+h)-\regOp(u)-\regOp'(u)h \bigg|&=\bigg|\int_\Omega \bigg( \reg_\varepsilon'\big(u(x)+\delta(x)h(x)\big)-{ \reg}_\varepsilon'\big(u(x)\big)\bigg)h(x) \ \mathrm dx\bigg|\nonumber\\
		&\leq\int_\Omega \left| \regg_\varepsilon\big(u(x)+\delta(x)h(x)\big)-{ \regg}_\varepsilon\big(u(x)\big)\right|\left|h(x)\right| \ \mathrm dx\nonumber\\
		&\leq L_{\regg_\varepsilon}\int_\Omega \left|\delta(x)\right|\left|h(x)\right|^2 \ \mathrm dx
		\leq  L_{\regg_\varepsilon} \|h\|_{L^2(\Omega)}^2, \nonumber
	\end{align*}
	showing the differentiability result.
	
	Further, we show that the map $\regOp'\colon L^2(\Omega)\to L^2(\Omega)'$ is Lipschitz continuous.
	By Lemma \ref{lem:fixPoiExistence}, $\regg_\varepsilon = \reg_\varepsilon'$ is globally Lipschitz. Thus
	\begin{align*}
		\|\regOp'(u)-\regOp'(v)\|_{L^2(\Omega)'}&=\sup\limits_{\|z\|_{L^2}=1}\bigg|\int_\Omega \big( \reg_\varepsilon'\big(u(x)\big)- \reg_\varepsilon'\big(v(x)\big)\big) z \ dx\bigg|\\
		&\leq  \| \reg_\varepsilon'\circ u- \reg_\varepsilon'\circ v\big\|_{L^2(\Omega)}\\
		&\leq  L_{\regg_\varepsilon} \|u-v\|_{L^2(\Omega)}
	\end{align*}
	for $u,v\in L^2(\Omega)$.
	Hence, $\regOp$ is continuously differentiable.
	The weak lower semi-continuity follows from the fact that $\regOp$ is convex and continuous.
\end{proof}
We can finally prove the existence of a solution to the smoothed optimality system.
\begin{theorem}[Solvability of the smoothed optimality system \eqref{eq:reg_OS}]\label{Theorem: Solv Reg} For all $\varepsilon>0$, the auxiliary optimal control problem
	\begin{equation}\label{eq:P_eps}
		\min\limits_{u\in L^2(\Omega)}
		J_\varepsilon(u)=\frac{1}{2}\bigr\|S(u)-y_d\bigr\|_{L^2}^2+\frac{\nu}{2}\bigr\|u\bigr\|_{L^2}^2+\mu \regOp(u) 
	\end{equation}
	admits a global solution $u\in L^2(\Omega)$ and $y,p\in H_0^1(\Omega)\cap L^\infty(\Omega)$ that is a solution to the smoothed optimality system \eqref{eq:reg_OS}.
\end{theorem}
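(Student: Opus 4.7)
The plan is to first establish existence of a global minimizer for the auxiliary problem \eqref{eq:P_eps} via the direct method of the calculus of variations, then derive its necessary first-order conditions and verify that they coincide with the smoothed optimality system \eqref{eq:reg_OS}. For existence, observe that $J_\varepsilon$ is bounded below by $0$ since all three terms in the cost are nonnegative (using $\mathcal{D}_\varepsilon \ge 0$ from Lemma \ref{lemma S Nem }), and coercive in $L^2(\Omega)$ because of the quadratic Tikhonov term. For weak lower semi-continuity, the $\|u\|_{L^2}^2$-term is trivially weakly lsc, $\mathcal{D}_\varepsilon$ is weakly lsc by Lemma \ref{lemma S Nem }, and I would handle the tracking term by a standard argument: take a minimizing sequence $u_n \rightharpoonup u^*$ in $L^2(\Omega)$, use the boundedness estimate \eqref{eq:bounded} to extract a subsequence with $S(u_{n_k}) \rightharpoonup \tilde y$ in $H^1_0(\Omega)$ and strongly in $L^2(\Omega)$ by the compact Rellich embedding, then pass to the limit in the semi-linear PDE (using continuity of the Nemytskii operator $\varphi$ on $L^2$ and the linear closedness of $A$) to identify $\tilde y = S(u^*)$, yielding $\|S(u_{n_k})-y_d\|_{L^2}^2 \to \|S(u^*)-y_d\|_{L^2}^2$. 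Combined with coercivity this produces a minimizer $\bar u \in L^2(\Omega)$.

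Next I would set $\bar y \coloneqq S(\bar u) \in H^1_0(\Omega) \cap L^\infty(\Omega)$ (well-defined by Lemma \ref{lem:forwardSol}) and let $\bar p \in H^1_0(\Omega) \cap L^\infty(\Omega)$ denote the solution of the adjoint equation \eqref{eq:reg_OS:adjoint} with right-hand side $\bar y - y_d$, whose existence and regularity are guaranteed by Lemma \ref{lem:adjointSol}. To derive the stationarity condition, I would show that $u \mapsto \tfrac{1}{2}\|S(u)-y_d\|_{L^2}^2$ is Fréchet differentiable with derivative $h \mapsto \langle \bar p, h\rangle_{L^2}$ at $\bar u$ via the standard adjoint/sensitivity argument: the linearization of the state equation about $\bar y$ is $(A+\varphi'(\bar y))\delta y = h$, so $\langle S(\bar u)-y_d, S'(\bar u)h\rangle_{L^2} = \langle \bar y - y_d, \delta y\rangle_{L^2} = \langle (A+\varphi'(\bar y))\bar p, \delta y\rangle_{H^{-1},H_0^1} = \langle \bar p, h\rangle_{L^2}$. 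Combining with Lemma \ref{lemma S Nem } gives $J_\varepsilon'(\bar u) = \bar p + \nu \bar u + \mu \regg_\varepsilon(\bar u) = 0$ in $L^2(\Omega)$.

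Finally, I would close the loop by identifying $\regg_\varepsilon(\bar u)$ with $P_\varepsilon(-\bar p/\mu)$. From the stationarity equation we get the pointwise identity $-\bar p(x)/\mu = \regg_\varepsilon(\bar u(x)) + (\nu/\mu)\bar u(x)$ for a.e.\ $x\in\Omega$. Plugging this into the defining fixed-point relation \eqref{eq:defdeps}, namely $\regg_\varepsilon(\bar u(x)) = P_\varepsilon(\regg_\varepsilon(\bar u(x)) + (\nu/\mu)\bar u(x))$, yields $\regg_\varepsilon(\bar u(x)) = P_\varepsilon(-\bar p(x)/\mu)$, so the stationarity condition becomes exactly \eqref{eq:reg_OS:stationarity}. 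The main obstacle I anticipate is the weak-continuity argument for $u \mapsto S(u)$ in the existence step, since the semi-linearity of the PDE prevents a purely linear argument and requires the compact embedding together with a Nemytskii-continuity passage to the limit; the remaining derivation of the optimality conditions is a routine adjoint calculation once differentiability of $\mathcal{D}_\varepsilon$ from Lemma \ref{lemma S Nem } and the fixed-point identity from Lemma \ref{lem:fixPoiExistence} are in hand.
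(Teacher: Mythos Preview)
Your proposal is correct and follows essentially the same route as the paper: establish existence of a minimizer for $J_\varepsilon$ by the direct method (the paper defers this to \cite[Sec.~4.4.2]{Troeltzsch05} and Lemma~\ref{lem:minEx} rather than spelling out the weak-continuity argument for $S$), derive the first-order condition $\bar p+\nu\bar u+\mu\nabla\regOp(\bar u)=0$ via the standard adjoint calculation, and then use the pointwise fixed-point identity \eqref{eq:defdeps} together with the stationarity relation to rewrite $\regg_\varepsilon(\bar u)=P_\varepsilon(-\bar p/\mu)$. The only difference is level of detail, not of strategy.
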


\begin{proof}
	Using the properties of $\regOp$ from Lemma \ref{lemma S Nem }, we see that \eqref{eq:P_eps} is well-posed and admits a solution, cf.~Lemma \ref{lem:minEx} and, again, \cite[Sec. 4.4.2]{Troeltzsch05}.
	Necessarily, its first-order optimality system admits a solution $u\in L^2(\Omega)$ and $y,p\in H^1(\Omega)\cap L^\infty(\Omega)$, cf.~also Lemmas \ref{lem:forwardSol} and \ref{lem:adjointSol}.
	This system is given by the state and adjoint equation and the optimality condition
	\begin{equation}\label{opt condition}
		p+\nu u+ \mu \nabla\regOp(u)=0 \text{ in } L^2(\Omega).
	\end{equation}
	With $\regOp'(u)$ given by \eqref{eq:regOpDerDef}, we have that $\nabla\regOp(u)= \reg'_\varepsilon \circ u\in L^2(\Omega)$ and by construction of $ \reg'_\varepsilon=\regg_\varepsilon$ via \eqref{eq:defdeps}, \eqref{opt condition} implies
	\begin{equation}
		{ \reg}'_\varepsilon(u)=P_{\varepsilon}\bigg({ \reg}'_\varepsilon(u)+\frac{\nu}{\mu} u\bigg)=P_{\varepsilon}\bigg(-\frac{p}{\mu}\bigg) \in L^2(\Omega).
	\end{equation}
	Therefore the solvable optimality system of \eqref{eq:P_eps} coincides with \eqref{eq:reg_OS}.
\end{proof}

\begin{remark}
	Due to the fixed-point approach of defining $\regg_\varepsilon$, we do not obtain an explicit representation for $\reg_\varepsilon$.
	Numerically, one can observe that $\reg_\varepsilon$ behaves like a smoothing of the absolute value function.
\end{remark}

\subsection{Convergence analysis}\label{sec:convAna}
In this section, we study the behavior of solutions to the regularized system \eqref{eq:reg_OS}
as the smoothing parameter $\varepsilon$ tends to $0$.
In particular, we prove that these converge weakly to a solution to the original non-smooth system \eqref{eq:non-smooth_OS_lam} in the sense that weak accumulation points of sequences of the regularized solutions are solutions of the non-smooth optimality system  \eqref{eq:non-smooth_OS_lam}.
\begin{theorem}
	\label{thm:convResult}
	Let $(y_n)_{n\in \N},(p_n)_{n\in \N}\subseteq H^1_0(\Omega)\cap L^{\infty}(\Omega)$ be sequences of solutions of \eqref{eq:reg_OS_red} for $\varepsilon=\varepsilon_n\searrow 0$ as $n \rightarrow \infty$
	and let $y$ and $p$ be weak $H^1(\Omega)$-accumulation points of these sequences, respectively.
	Then $y$ and $p$ are solutions of \eqref{eq:non-smooth_OS_reduced}, and any subsequences of $(y_n)_{n\in \N},(p_n)_{n\in \N}$ converging weakly to $y$ and $p$, respectively, converge strongly in $H_0^1(\Omega)\cap L^{\infty}(\Omega)$.
\end{theorem}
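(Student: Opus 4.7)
The plan is to extract a common subsequence (still denoted $(y_n, p_n)$) along which both sequences converge weakly to $y$ and $p$ in $H^1_0(\Omega)$, pass to the limit in the reduced system \eqref{eq:reg_OS_red}, and finally upgrade the weak convergence to strong convergence in $H^1_0\cap L^\infty$ by invoking the Lipschitz estimates from Lemmas \ref{lem:forwardSol} and \ref{lem:adjointSol}.

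\textbf{Uniform bounds and compactness.} Weak convergence provides a uniform $H^1$-bound and hence a uniform $L^2$-bound on $(y_n, p_n)$. Applying Lemma \ref{lem:adjointSol} to the adjoint equation in \eqref{eq:reg_OS_red} with coefficient $\varphi'(y_n)$ gives a uniform $L^\infty$-bound on $p_n$, the constants being independent of $y_n$. Combined with $|P_{\varepsilon_n}(\cdot)|\le 1$, this yields a uniform $L^2$-bound on the right-hand side of the state equation, and Lemma \ref{lem:forwardSol} delivers a uniform $L^\infty$-bound on $y_n$. The Rellich--Kondrachov embedding then allows one to pass to a further subsequence with strong $L^2$-convergence and pointwise a.e.~convergence.

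\textbf{Passage to the limit.} The decisive step is the projection term: by the triangle inequality, $1$-Lipschitz continuity of $\proj_{[-1,1]}$, and Lemma \ref{lem:PProperties} Point \ref{enum:smoothProjDer_iiii},
\begin{equation*}
\Big\| P_{\varepsilon_n}\Big(-\tfrac{p_n}{\mu}\Big) - \proj_{[-1,1]}\Big(-\tfrac{p}{\mu}\Big)\Big\|_{L^2}
\le |\Omega|\sqrt{\varepsilon_n} + \tfrac{1}{\mu}\|p_n - p\|_{L^2}\xrightarrow{n\to\infty} 0.
\end{equation*}
The uniform $L^\infty$-bound on $y_n$, local Lipschitz continuity of $\varphi$ and $\varphi'$, and dominated convergence give $\varphi(y_n) \to \varphi(y)$ and $\varphi'(y_n) p_n \to \varphi'(y) p$ in $L^2$. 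Testing \eqref{eq:reg_OS_red} against $H^1_0$-functions and using that $A$ is weakly continuous then shows that $(y,p)$ solves \eqref{eq:non-smooth_OS_reduced}.

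\textbf{Strong convergence.} Writing $y_n = S(u_n)$ with $u_n := -\tfrac{1}{\nu}\bigl(p_n + \mu P_{\varepsilon_n}(-p_n/\mu)\bigr)$ and $y = S(u)$ for the analogous limit control, the $L^2$-convergence above yields $u_n \to u$ in $L^2$, so estimate \eqref{eq:Lipschitz} gives $y_n \to y$ strongly in $H^1_0 \cap L^\infty$. Subtracting the limit adjoint equation from the one at $y_n$ produces
\begin{equation*}
(A + \varphi'(y_n))(p_n - p) = (y_n - y) - (\varphi'(y_n) - \varphi'(y))\, p,
\end{equation*}
whose right-hand side tends to zero in $L^2$ by the just-established strong convergence and the $L^\infty$-bound on $p$; Lemma \ref{lem:adjointSol} then delivers strong $H^1_0 \cap L^\infty$-convergence of $p_n$. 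The principal obstacle is the tight coupling between uniform $L^\infty$-bounds and the passage to the limit in the nonlinear projection, which is non-monotone; the quantitative rate furnished by Lemma \ref{lem:PProperties} Point \ref{enum:smoothProjDer_iiii} is precisely what makes this passage possible.
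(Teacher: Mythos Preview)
Your proof is correct and follows essentially the same strategy as the paper: uniform $L^\infty$-bounds via Lemmas~\ref{lem:forwardSol} and~\ref{lem:adjointSol}, Rellich compactness for strong $L^2$-convergence, passage to the limit in each term, and then the Lipschitz estimates for the upgrade to strong $H^1_0\cap L^\infty$-convergence. The only notable (minor) difference is in the projection term: the paper splits as $P_{\varepsilon_n}(-p_n/\mu)-P_{\varepsilon_n}(-p/\mu)$ plus $P_{\varepsilon_n}(-p/\mu)-\proj_{[-1,1]}(-p/\mu)$ and handles the second piece by dominated convergence, whereas you split the other way and invoke the quantitative $\sqrt{\varepsilon}$ rate from Lemma~\ref{lem:PProperties}~Point~\ref{enum:smoothProjDer_iiii}; so your closing remark that this rate is ``precisely what makes the passage possible'' slightly overstates its necessity, but the argument itself is sound.
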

\begin{proof}
	We extract weakly convergent subsequences (which we tacitly denote with the same symbols as the original sequences) that satisfy \eqref{eq:reg_OS_red} for $\varepsilon=\varepsilon_n$, i.e.,
	\begin{equation}\label{eq:OS_regularized}
		\begin{aligned}
			A y_n+\varphi(y_n)&=f-\frac{1}{\nu}\bigg(p_n+\mu P_{\varepsilon_n}\bigg({-\frac{p_n}{\mu}}\bigg)\bigg) &&\text{ in } H^{-1}(\Omega), \\
			\Big(A +\varphi'(y_n)\Big)p_n&=y_n-y_d &&\text{ in }  H^{-1}(\Omega),
		\end{aligned}
	\end{equation}
	and such that $y_n \rightharpoonup y$ and $p_n \rightharpoonup p$ in $H_0^1(\Omega)$.
	By Rellich's compact embedding theorem, we obtain that $y_n\to y$ and $p_n\to p$ strongly in $L^2(\Omega)$.
	This especially implies that $-\frac{1}{\nu}\big(p_n+\mu P_{\varepsilon_n}({-\frac{p_n}{\mu}})\big)$ is uniformly bounded in $L^2(\Omega)$. Thus, using the a-priori estimate in Lemma \ref{lem:forwardSol}, we obtain that there exists an $M_p >0$ such that
	\begin{equation*}
		\|y_n\|_{H^1}+\|y_n\|_{L^{\infty}}\leq C \bigg\|f-\frac{1}{\nu}\bigg(p_n+\mu P_{\varepsilon_n}\biggr(-\frac{p_n}{\mu}\biggr)\bigg)-\varphi(0)\bigg\|_{L^2}\leq M_y.
	\end{equation*}
	The corresponding a-priori estimate for the adjoint equation in Lemma \ref{lem:adjointSol} yields
	$\|p_n\|_{H^1}+\|p_n\|_{L^{\infty}}\leq C \|y_n-y_d\|_{L^2}\leq M_p$,
	for an $M_p>0$.
	Defining $M\coloneqq\max(M_y,M_p)$, the set $\{ v\in H_0^1(\Omega)\cap L^\infty(\Omega): \|v\|_{L^{\infty}}\leq M\}$ is convex and closed in the $H^1(\Omega)$-topology and therefore weakly closed in $H^1(\Omega)$.
	Hence, we obtain that
	\begin{equation}\label{boundedness}
		\|y_n\|_{L^{\infty}},\|p_n\|_{L^{\infty}},\|y\|_{L^{\infty}},\|p\|_{L^{\infty}}\leq M \qquad \forall n\in \N.
	\end{equation}
	By \cite[Lemma 4.11]{Troeltzsch05}, the Nemytskii operator $\varphi\colon L^\infty(\Omega)\to L^\infty(\Omega)$ associated with $\varphi$ satisfies
	\begin{equation*}
		\|\varphi(y_n)-\varphi({y})\|_{L^2}\leq K_\varphi(M)\|y_n-y\|_{L^2}{\xrightarrow{n\to\infty}}0
	\end{equation*}
	for a $K_\varphi(M)>0$, i.e., $\varphi(y_n)\to\varphi(y)$ in $L^2(\Omega)$.
	Since $\psi((y,p)\in\mathbb{R}^2) \coloneqq \varphi'(y)p\in \mathbb{R}$ is continuously differentiable and therefore locally Lipschitz, a direct modification to \cite[Lemma 4.11]{Troeltzsch05} to account for functions $\psi\colon \R^2\to \R$ yields that the associated Nemytskii operator $\psi\colon L^{\infty}(\Omega)^2\to L^{\infty}(\Omega) $ satisfies
	\begin{equation*}
		\|\psi(y_n,p_n)-\psi(y,p)\|_{L^2}\leq K_\psi(M)\|(y_n,p_n)-(y,p)\|_{L^2} \to 0,
	\end{equation*}
	for a $K_\psi(M)>0$, i.e., $\psi(y_n,p_n)\to\psi(y,p)$ in $L^2(\Omega)$.
	
	Since $A$ is linear and bounded from $H_0^1(\Omega)$ to $H^{-1}(\Omega)$,
	we know that $A$ is weakly continuous, which implies that $Ay_n\rightharpoonup Ay$ and $Ap_n\rightharpoonup Ap$ in $H^{-1}(\Omega)$.
	Finally, using the Lipschitz property of $P_{\varepsilon_n}$ in Lemma \ref{lem:PProperties}, we obtain that
	\begin{align*}
		\bigg\|P_{\varepsilon_n}&\bigg(-\frac{p_n}{\mu}\bigg)-\mbox{proj}_{[-1,1]}\bigg(-\frac{p}{\mu}\bigg)\bigg\|_{L^2}\\
		\leq &  \bigg\|P_{\varepsilon_n}\bigg(-\frac{p_n}{\mu}\bigg)-P_{\varepsilon_n}\bigg(-\frac{p}{\mu}\bigg)\bigg\|_{L^2}+\bigg\|P_{\varepsilon_n}\bigg(-\frac{p}{\mu}\bigg)-\mbox{proj}_{[-1,1]}\bigg(-\frac{p}{\mu}\bigg)\bigg\|_{L^2}\\
		\leq& L\bigg\|p_n-p\bigg\|_{L^2}+\bigg\|P_{\varepsilon_n}\bigg(-\frac{p}{\mu}\bigg)-\mbox{proj}_{[-1,1]}\bigg(-\frac{p}{\mu}\bigg)\bigg\|_{L^2}\to 0,
	\end{align*}
	where we used dominated convergence for the second term, since by Lemma \ref{lem:PProperties} we have the pointwise convergence, and the integrand is bounded.
	Summarizing, we have
	\begin{equation}\label{eq:summary}
		\begin{aligned}
			&Ay_n\rightharpoonup Ay \; \mbox{ and } \; Ap_n\rightharpoonup Ap &&\mbox{ in } H^{-1}(\Omega),\\
			&y_n\to y \; \mbox{ and } \; p_n\to p &&\mbox{ in } L^2(\Omega),\\
			&P_{\varepsilon_n}\bigg(-\frac{p_n}{\mu}\bigg)\to\mbox{proj}_{[-1,1]}\bigg(-\frac{p}{\mu}\bigg)  &&\mbox{ in } L^2(\Omega),\\
			&\varphi(y_n)\to \varphi(y) \mbox{ and } \psi(y_n,p_n)\to \psi(y,p) &&\mbox{ in } L^2(\Omega).
		\end{aligned}
	\end{equation}
	Using \eqref{eq:summary}, we can take the limit in \eqref{eq:OS_regularized} and obtain that $y,p$ are solutions of (\ref{eq:reg_OS_red}).
	With the Lipschitz property of the solution operator $S$ in Lemma \ref{lem:forwardSol}, we obtain strong convergence in $H_0^1(\Omega)\cap L^\infty(\Omega)$, because
	\begin{equation*}
		\medmuskip=-0.0mu
		\thinmuskip=-0.0mu
		\thickmuskip=-0.0mu
		\nulldelimiterspace=1.3pt
		\scriptspace=1.3pt
		\arraycolsep1.3em
		\big\| y-y_n\big\|_{H^1}+\big\| y-y_n\big\|_{L^\infty}\leq \frac{L}{\nu} \bigg\| p-p_n+\mu \bigg(\mbox{proj}_{[-1,1]}\bigg(-\frac{p}{\mu}\bigg)- P_{\varepsilon_n}\bigg(-\frac{p_n}{\mu}\bigg)\bigg) \bigg\|_{L^2}\to 0.
	\end{equation*}
	Similarly, the strong convergence of $p_n\to p$ in $H^1(\Omega)\cap L^\infty(\Omega)$ follows from the corresponding Lipschitz condition in Lemma \ref{lem:adjointSol}.
\end{proof}

\begin{remark}[$L^\infty(\Omega)$-convergence of $(u_n)_{n\in \N}$]
	Because $p_n\to p$ in $L^\infty(\Omega)$ and $u_n = -\frac{1}{\nu}({p}_n+\mu P_{\varepsilon_n}(-\frac{{p}_n}{\mu}))$, the continuity of $P_{\varepsilon_n}\colon L^\infty(\Omega) \rightarrow L^\infty(\Omega)$ even yields convergence $u_n\to u$ in $L^\infty(\Omega)$.
\end{remark}

The following technical lemma guarantees that there in fact exist sequences of solutions to the smoothed optimality systems that possess accumulation points.
\begin{lemma}[Existence of weak accumulation points]
	\label{existence of weak cumulation}
	Let $\varepsilon_n\searrow 0$.
	Then there exist sequences $(u_n)_{n\in \N}\subseteq L^2(\Omega)$ and $(y_n)_{n\in \N},(p_n)_{n\in \N}\subseteq H^1(\Omega)\cap L^\infty(\Omega)$ of global solutions to the auxiliary optimization problems \eqref{eq:P_eps} (and its necessary optimality system \eqref{eq:reg_OS}) for each $\varepsilon=\varepsilon_n$, such that $(u_n)_{n\in \N}$ has a weak $L^2$-accumulation point $u$ and $(y_n)_{n\in \N},(p_n)_{n\in \N}$ have weak $H^1$-accumulation points $y,p\in H^1(\Omega)\cap L^\infty(\Omega)$.
\end{lemma}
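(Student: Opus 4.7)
The plan is to exploit the optimality of $u_n$ in the auxiliary problem \eqref{eq:P_eps} and then propagate uniform bounds through the state and adjoint solution operators. By Theorem~\ref{Theorem: Solv Reg}, for each $\varepsilon_n>0$ there exists a global minimizer $u_n\in L^2(\Omega)$ of $J_{\varepsilon_n}$ with associated $y_n,p_n\in H^1_0(\Omega)\cap L^\infty(\Omega)$ satisfying \eqref{eq:reg_OS}. The first step is to compare $J_{\varepsilon_n}(u_n)$ with $J_{\varepsilon_n}(0)$: since $u_n$ is a global minimizer, $\regOp(u)\ge 0$ and $\regOp(0)=0$ (Lemmas~\ref{eq:antider_S} and \ref{lemma S Nem }), we obtain
\begin{equation*}
\frac{\nu}{2}\|u_n\|_{L^2}^2
\leq J_{\varepsilon_n}(u_n)
\leq J_{\varepsilon_n}(0)
= \tfrac{1}{2}\|S(0)-y_d\|_{L^2}^2,
\end{equation*}
so $(u_n)$ is uniformly bounded in $L^2(\Omega)$ by a constant independent of $n$.

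Next I would invoke weak sequential compactness: as $L^2(\Omega)$ is reflexive, a subsequence of $(u_n)$ converges weakly to some $u\in L^2(\Omega)$, giving the desired weak accumulation point for the controls. The uniform $L^2$-bound on $u_n$ then propagates via the a priori estimate \eqref{eq:bounded} of Lemma~\ref{lem:forwardSol} to give $\|y_n\|_{H^1}+\|y_n\|_{L^\infty}\le C\|f+u_n-\varphi(0)\|_{L^2}\le M_y$, and in turn, via Lemma~\ref{lem:adjointSol} applied to the adjoint equation \eqref{eq:reg_OS:adjoint} (with right-hand side $y_n-y_d$), to $\|p_n\|_{H^1}+\|p_n\|_{L^\infty}\le C\|y_n-y_d\|_{L^2}\le M_p$. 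Note that the constants in Lemma~\ref{lem:adjointSol} are independent of $y_n$, which is essential here.

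Weak compactness in $H^1_0(\Omega)$ then yields (up to subsequences) weak $H^1$-accumulation points $y,p\in H^1_0(\Omega)$ for $(y_n)$ and $(p_n)$. It remains to verify that these accumulation points lie in $L^\infty(\Omega)$. For this I would argue, exactly as in the proof of Theorem~\ref{thm:convResult} around \eqref{boundedness}, that the set $\{v\in H^1_0(\Omega)\cap L^\infty(\Omega):\|v\|_{L^\infty}\le M\}$ is convex and norm-closed in $H^1(\Omega)$, hence weakly closed in $H^1(\Omega)$; since both $(y_n)$ and $(p_n)$ lie in this set with $M=\max(M_y,M_p)$, the weak limits $y,p$ inherit the $L^\infty$-bound.

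The only mildly delicate point is the uniform-in-$n$ bound on $u_n$, since $\regOp$ depends on $\varepsilon_n$; this is resolved by the observation that $\regOp\ge 0$ and $\regOp(0)=0$ for every $\varepsilon_n>0$, so the comparison $J_{\varepsilon_n}(u_n)\le J_{\varepsilon_n}(0)$ gives a bound that is manifestly independent of $n$. Everything else is a routine chain of a priori estimates and reflexivity arguments.
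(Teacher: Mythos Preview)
Your proof is correct and follows essentially the same route as the paper: compare $J_{\varepsilon_n}(u_n)$ with $J_{\varepsilon_n}(0)$ using $\regOp\ge 0$ and $\regOp(0)=0$ to obtain a uniform $L^2$-bound on $(u_n)$, then propagate through Lemmas~\ref{lem:forwardSol} and~\ref{lem:adjointSol} and invoke reflexivity. You are in fact slightly more careful than the paper's own proof, which does not explicitly verify that the weak $H^1$-limits $y,p$ lie in $L^\infty(\Omega)$; your weak-closedness argument (borrowed from the proof of Theorem~\ref{thm:convResult}) fills that gap.
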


\begin{proof}
	The sequences exist because of Theorem \ref{Theorem: Solv Reg}, and we have that $y_n=S(u_n)$. Since $u_n$ is a global minimizer for $J_{\varepsilon_n}$ (thus $J_{\varepsilon_n}(u_n)\leq J_{\varepsilon_n}(0)$) and using the identity $\mathcal{\reg}_{\varepsilon_n}(0)=0$ from Lemma \ref{lemma S Nem }, we obtain
	\begin{equation} \label{Global_estimate}
		\frac{\nu}{2}\bigr\| u_n\bigr\|_{L^2}^2 \leq {J}_{{\varepsilon_n}}(u_n)\leq {J}_{{\varepsilon_n}}(0)= J(S(0),0)= \frac{1}{2}\bigr\| S(0)-y_d\bigr\|^2_{L^2}<\infty.
	\end{equation}
	Thus, $(u_n)_{n\in \N}$ is uniformly bounded in $L^2(\Omega)$. Using the a-priori estimates for $y_n$ from Lemma \ref{lem:forwardSol}, we get
	$\bigr\|y_n\bigr\|_{H^1}+\bigr\|y_n\bigr\|_{L^{\infty}}\leq C \bigr\|f+u_n-\varphi(0)\bigr\|_{L^2}$.
	Hence, $(y_n)_{n\in \N}$ is uniformly bounded in $H_0^1(\Omega)$.
	By the corresponding a-priori estimate of Lemma \ref{lem:adjointSol}, we get that $(p_n)_{n\in \N}$ is uniformly bounded in $H_0^1(\Omega)$.
	The existence of weak accumulation points follows from the reflexivity of Hilbert spaces.
\end{proof}
Now, we are interested in the question whether or not the convergence rate of order $\sqrt{\varepsilon}$ from Lemma \ref{lem:PProperties} (see~\eqref{eq:nemistky_estimate}) carries over to the convergence of the solutions to the regularized optimality systems, cf.~Theorem \ref{thm:convResult}.
Consider the product space $V=H^1_0(\Omega)^2$, its dual $V'=H^{-1}(\Omega)^2$ and let $x=(y,p)\in V$.
We introduce the ($\varepsilon$-regularized) map $F: V\times[0,\infty) \to V'$ as
\begin{equation*}
	{F}(x, \varepsilon) \coloneqq
	\begin{bmatrix}
		Ay + \varphi({y})-f+\frac{1}{\nu}\big({p}+\mu P_\varepsilon(-\frac{{p}}{\mu})\big) \\
		Ap + \varphi'({y}){p}-{y}+y_d \\
	\end{bmatrix}.
\end{equation*}
Thus, the optimality system \eqref{eq:reg_OS_red} reads as
\begin{equation}\label{zeropoint}
	{F}( x, \varepsilon)=0 \quad \mbox{ in } V'.
\end{equation}
Well-posedness is guaranteed by Theorem~\ref{Theorem: Solv Reg} and we denote the solution for $\varepsilon>0$ by $x(\varepsilon)=(y(\varepsilon), p(\varepsilon))$. The partial derivatives of $F$ are
\begin{equation*}
	\begin{aligned}
		\frac{\partial}{\partial x}F(x,\varepsilon)
		&=
		\begin{bmatrix}
			A+\varphi'(y) & \frac{1}{\nu}(\id - P_\varepsilon'(-\frac{p}{\mu}))\\
			\varphi''(y)p - \id & A + \varphi'(y)
		\end{bmatrix} \in \mathcal L(V, V'),
		\\
		\frac{\partial}{\partial \varepsilon}F(x,\varepsilon)
		&=
		\begin{bmatrix}
			\frac{\mu}{\nu} \frac{\partial}{\partial \varepsilon} P_\varepsilon(-\frac{p}{\nu})\\
			0
		\end{bmatrix} \in \mathcal L([0, \infty), V')
		.
	\end{aligned}
\end{equation*}
We have the following sufficient condition for the convergence rate of the regularized solutions.
\begin{theorem}
	\label{thm:convergencerate}
	Assume that $\frac{\partial}{\partial \varepsilon} F(x(\varepsilon), \varepsilon)$ is continuously invertible for all $\varepsilon >0$ and its inverse is bounded independently of $\varepsilon$.
	Then, for $x(\varepsilon)$ and its $V$-limit $x$, we have the asymptotic
	\begin{equation*}
		\|x-x(\varepsilon)\|_{{H_0^1(\Omega)}^2} = \mathcal{O}(\sqrt{\varepsilon}) \quad \mbox{ as } \varepsilon\to 0.
	\end{equation*}
\end{theorem}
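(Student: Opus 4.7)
The plan is to exploit that $x(\varepsilon)$ solves $F(\,\cdot\,,\varepsilon) = 0$ while the limit $x$ solves $F(\,\cdot\,,0) = 0$, and to extract the rate by a Taylor-type expansion of $F$ in its first argument. Writing
\begin{equation*}
  0 = F(x(\varepsilon),\varepsilon) - F(x,0)
    = \bigl(F(x(\varepsilon),\varepsilon) - F(x,\varepsilon)\bigr) + \bigl(F(x,\varepsilon) - F(x,0)\bigr)
\end{equation*}
and applying the fundamental theorem of calculus to the first difference would give
\begin{equation*}
  M_\varepsilon \bigl(x(\varepsilon)-x\bigr) = -\bigl(F(x,\varepsilon)-F(x,0)\bigr),
  \quad
  M_\varepsilon := \int_0^1 \tfrac{\partial F}{\partial x}\bigl(x + t(x(\varepsilon)-x),\varepsilon\bigr)\,\mathrm dt.
\end{equation*}
The point is to bound the right-hand side by $\mathcal O(\sqrt\varepsilon)$ and to invert $M_\varepsilon$ uniformly in $\varepsilon$.

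For the right-hand side, only the first component is nonzero and equals $\tfrac{\mu}{\nu}\bigl(P_\varepsilon(-p/\mu) - \mathrm{proj}_{[-1,1]}(-p/\mu)\bigr)$, since $F(\,\cdot\,,\varepsilon)$ and $F(\,\cdot\,,0)$ differ only in the regularized projection term. By Lemma~\ref{lem:PProperties}\,\ref{enum:smoothProjDer_iiii} this quantity is bounded in $L^2(\Omega)$ by $\tfrac{\mu}{\nu}|\Omega|\sqrt{\varepsilon}$, and via the Gelfand embedding $L^2(\Omega)\hookrightarrow H^{-1}(\Omega)$ the same order bound carries over to $V' = H^{-1}(\Omega)^2$. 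This is the sole source of the $\sqrt\varepsilon$ rate.

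It then remains to invert $M_\varepsilon$ with a norm bound uniform in $\varepsilon$. The theorem's hypothesis supplies such a uniform bound for the linearization evaluated at $x(\varepsilon)$ itself. To transfer it to the averaged operator $M_\varepsilon$, I would estimate the perturbation $M_\varepsilon - \tfrac{\partial F}{\partial x}(x(\varepsilon),\varepsilon)$ in terms of a modulus of continuity of $\tfrac{\partial F}{\partial x}(\,\cdot\,,\varepsilon)$ applied to $\|x(\varepsilon) - x\|_V$, which tends to zero as $\varepsilon\searrow 0$ by the qualitative convergence established in Theorem~\ref{thm:convResult}. Once this perturbation falls below the reciprocal of the uniform inverse bound, a standard Neumann-series argument gives a uniform bound on $M_\varepsilon^{-1}$, whence
\begin{equation*}
  \|x(\varepsilon)-x\|_{H_0^1(\Omega)^2} \le \|M_\varepsilon^{-1}\|_{\mathcal L(V',V)}\,\|F(x,\varepsilon)-F(x,0)\|_{V'} = \mathcal O(\sqrt\varepsilon).
\end{equation*}
The main obstacle lies precisely in this perturbation step: $\tfrac{\partial F}{\partial x}$ involves $P_\varepsilon'(-p/\mu)$, whose Lipschitz behavior in its argument degenerates as $\varepsilon\searrow 0$ near the kink locus $|p/\mu|=1$, so the modulus of continuity of $\tfrac{\partial F}{\partial x}(\,\cdot\,,\varepsilon)$ must be shown to be compatible (uniformly in $\varepsilon$) with the available qualitative convergence $x(\varepsilon)\to x$ in $H^1\cap L^\infty$ rather than merely in $H^1$.
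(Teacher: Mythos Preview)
Your approach is natural but contains a genuine gap precisely where you flag it. The Neumann-series perturbation from $\tfrac{\partial F}{\partial x}(x(\varepsilon),\varepsilon)$ to the averaged operator $M_\varepsilon$ requires the difference to be small in $\mathcal L(V,V')$. The only $\varepsilon$-sensitive entry in $\tfrac{\partial F}{\partial x}$ is the $(1,2)$-block multiplier $P_\varepsilon'(-p/\mu)$, and since $\|P_\varepsilon''\|_{L^\infty}\sim 1/\sqrt{\varepsilon}$ (attained near the kink set $|p/\mu|=1$), the Lipschitz estimate you would use gives
\[
\bigl\|M_\varepsilon-\tfrac{\partial F}{\partial x}(x(\varepsilon),\varepsilon)\bigr\|_{\mathcal L(V,V')}
\;\lesssim\;\frac{1}{\sqrt{\varepsilon}}\,\|p(\varepsilon)-p\|.
\]
For the Neumann argument you need the right-hand side to be eventually below a fixed threshold, i.e.\ $\|p(\varepsilon)-p\|=o(\sqrt{\varepsilon})$. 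But the qualitative convergence in Theorem~\ref{thm:convResult} gives no rate, and the rate you are trying to establish is only $\mathcal O(\sqrt{\varepsilon})$, which would make the quotient $\mathcal O(1)$ --- not small. So the loop does not close under the stated hypothesis, and the $L^\infty$-convergence of $p(\varepsilon)$ does not help: the Lipschitz constant of $P_\varepsilon'$ degenerates at exactly the rate that cancels the target estimate.

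The paper avoids this difficulty by differentiating the identity $F(x(\varepsilon),\varepsilon)=0$ in $\varepsilon$ via the implicit function theorem, obtaining $\dot x(\varepsilon)=-\bigl[\tfrac{\partial F}{\partial x}(x(\varepsilon),\varepsilon)\bigr]^{-1}\tfrac{\partial F}{\partial\varepsilon}(x(\varepsilon),\varepsilon)$. The Jacobian is evaluated \emph{only on the solution curve}, exactly where the hypothesis supplies a uniform inverse bound, so no perturbation step is needed. One then bounds $\|\tfrac{\partial}{\partial\varepsilon}P_s(-p/\mu)\|_{L^2}\le |\Omega|/(2\sqrt{s})$ (cf.\ \eqref{eq: appendix scalar estimate rate}) and integrates $\int_0^\varepsilon \|\dot x(s)\|\,ds\lesssim\int_0^\varepsilon s^{-1/2}\,ds=2\sqrt{\varepsilon}$. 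This route uses the same $\sqrt{\varepsilon}$ source you identified, but channels it through $\partial_\varepsilon F$ rather than through $F(x,\varepsilon)-F(x,0)$, thereby never needing to compare Jacobians at distinct base points.
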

\begin{proof}
	Given the assumption, we can apply the implicit function theorem to $F(x(\varepsilon),\varepsilon)=0$ and write
	\begin{equation*}
		0 
		= 
		\frac{d}{d\varepsilon} F(x(\varepsilon),\varepsilon) 
		= 
		\frac{\partial}{\partial x} F(x,\varepsilon)	\dot x(\varepsilon) + \frac{\partial}{\partial \varepsilon}F(x(\varepsilon),\varepsilon)
	\end{equation*}
	and therefore
	\begin{equation}
		\dot x(\varepsilon) = -\frac{\partial}{\partial x}F(x(\varepsilon),\varepsilon)^{-1}F_\varepsilon(x(\varepsilon),\varepsilon).
	\end{equation}
	Hence, $\varepsilon \mapsto x(\varepsilon)$ is continuously differentiable, and we can apply the fundamental theorem of calculus to obtain
	\begin{equation*}
		\|x(\varepsilon) - x\|_{V}
		\le
		\int_{0}^{\varepsilon}
		\|\dot x(s)\|_{\mathcal L(\R,V)} ds,
	\end{equation*}
	Since $\frac{\partial}{\partial \varepsilon}F(x(\varepsilon), \varepsilon)^{-1}$ is assumed to be bounded independently of $\varepsilon$, we have
	\begin{equation*}
		\begin{aligned}
			\Big\| \dot x(s) \Big\|_{\mathcal L(\R,V)}
			&= \Big\|\frac{\partial}{\partial x} F (x(s),s)^{-1}\frac{\partial}{\partial \varepsilon} F (x(s),s) \Big\|_{\mathcal L(\R,V)}	\\
			&\le  C \Big\|\frac{\partial}{\partial \varepsilon} F (x(s),s) \Big\|_{\mathcal L(\R,V')}
			= C  \Big\|\frac{\partial}{\partial \varepsilon} F (x(s),s) \Big\|_{ V'} \\
			& = C \tfrac{\mu}{\nu}\Big\|\tfrac{\partial}{\partial\varepsilon} P_s(-\tfrac{p}{\nu})  \Big\|_{ H^{-1}(\Omega)}
			= C \tfrac{\mu}{\nu} \sup\limits_{\|v\|_{L^2(\Omega)}=1}|\langle \tfrac{\partial}{\partial\varepsilon} P_s(-\tfrac{p}{\nu}), v \rangle_{L^2(\Omega)}|\\
			& \le C \tfrac{\mu}{\nu} \Big\|  \tfrac{\partial}{\partial\varepsilon} P_s(-\tfrac{p}{\nu}) \Big\|_{L^2(\Omega)}
			\le  \tfrac{C  |\Omega| \mu}{2\nu}\tfrac{1}{\sqrt{s}},
		\end{aligned}
	\end{equation*}
	where we used the estimate \eqref{eq: appendix scalar estimate rate} in the last inequality. Now the claim follows by integrating the estimate.
\end{proof}
\begin{corollary}\label{cor:convergence}
	Under the additional assumption that $\nu$ is sufficiently large and that $x(\varepsilon)$ is a global solution for $\varepsilon>0$, we have
	\begin{equation*}
		\|x-x(\varepsilon)\|_{{H_0^1(\Omega)}^2} = \mathcal{O}(\sqrt{\varepsilon}) \quad \mbox{ as } \varepsilon\to 0.
	\end{equation*}
\end{corollary}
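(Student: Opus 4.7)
The plan is to reduce the corollary to Theorem \ref{thm:convergencerate} by verifying its hypothesis (with the understanding that there is a typo in its statement: the assumption is meant to concern $\frac{\partial}{\partial x}F(x(\varepsilon),\varepsilon)$, not $\frac{\partial}{\partial \varepsilon}F$, as is evident from the proof). Concretely, the task is to show that, when $\nu$ is sufficiently large and $x(\varepsilon)$ corresponds to a global minimizer, the linearization $\frac{\partial}{\partial x}F(x(\varepsilon),\varepsilon) \in \mathcal L(V,V')$ is continuously invertible with a bound that does not depend on $\varepsilon$.

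First I would establish uniform $L^\infty$-bounds on $(y(\varepsilon),p(\varepsilon))$ that do not deteriorate as $\varepsilon \searrow 0$. Since $u(\varepsilon)$ is a global minimizer of $J_\varepsilon$, the same comparison argument as in \eqref{Global_estimate} (using $\regOp(0)=0$) yields $\|u(\varepsilon)\|_{L^2}^2 \le \nu^{-1}\|S(0)-y_d\|_{L^2}^2$ uniformly in $\varepsilon$. Applying Lemmas \ref{lem:forwardSol} and \ref{lem:adjointSol} then provides a constant $M>0$, independent of $\varepsilon$, such that $\|y(\varepsilon)\|_{L^\infty}+\|p(\varepsilon)\|_{L^\infty} \le M$.

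Next I would analyze the block operator
\begin{equation*}
\frac{\partial}{\partial x}F(x(\varepsilon),\varepsilon) = \begin{bmatrix} T_\varepsilon & B_\varepsilon \\ C_\varepsilon & T_\varepsilon \end{bmatrix},
\quad
T_\varepsilon = A + \varphi'(y(\varepsilon)),
\end{equation*}
with $B_\varepsilon = \frac{1}{\nu}(\id - P_\varepsilon'(-p(\varepsilon)/\mu))$ and $C_\varepsilon = \varphi''(y(\varepsilon))p(\varepsilon) - \id$, via a Schur-complement/Neumann-series argument. Lemma \ref{lem:adjointSol} shows that $T_\varepsilon\colon H^1_0(\Omega)\to H^{-1}(\Omega)$ is boundedly invertible with a bound independent of $y(\varepsilon)$, hence independent of $\varepsilon$. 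For $B_\varepsilon$, Lemma \ref{lem:PProperties}\ref{enum:smoothProjDer_ii} gives $\|\id - P_\varepsilon'\|_{L^\infty} \le 1$, so $\|B_\varepsilon\|_{\mathcal L(L^2,L^2)} \le \nu^{-1}$, and composing with the embedding $L^2 \hookrightarrow H^{-1}$ yields $\|B_\varepsilon\|_{\mathcal L(H^1_0,H^{-1})} = \mathcal O(1/\nu)$. For $C_\varepsilon$, the uniform $L^\infty$-bound from the first step together with local Lipschitz continuity of $\varphi''$ gives $\|\varphi''(y(\varepsilon))p(\varepsilon)\|_{L^\infty} \le K(M)$, so $\|C_\varepsilon\|_{\mathcal L(H^1_0,H^{-1})} \le K'$ uniformly in $\varepsilon$.

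Finally, I would conclude by the standard block-elimination formula: the Schur complement $T_\varepsilon - C_\varepsilon T_\varepsilon^{-1}B_\varepsilon$ satisfies $\|C_\varepsilon T_\varepsilon^{-1}B_\varepsilon\|_{\mathcal L(H^1_0,H^{-1})} \le K'\|T_\varepsilon^{-1}\|\cdot \mathcal O(1/\nu)$, which is strictly less than the ellipticity-threshold of $T_\varepsilon$ once $\nu$ is chosen large enough (the threshold being itself independent of $\varepsilon$). A Neumann series argument then gives a uniform bound on $(T_\varepsilon - C_\varepsilon T_\varepsilon^{-1}B_\varepsilon)^{-1}$, and hence on the full block inverse. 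This verifies the hypothesis of Theorem \ref{thm:convergencerate} and yields $\|x-x(\varepsilon)\|_V = \mathcal O(\sqrt \varepsilon)$.

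The principal obstacle I anticipate is the bookkeeping in Step 3 — making sure the off-diagonal operators are analyzed in compatible function spaces so that $C_\varepsilon T_\varepsilon^{-1}B_\varepsilon$ genuinely acts as a small perturbation of $T_\varepsilon$ in $\mathcal L(H^1_0,H^{-1})$, and in particular that the smallness factor $1/\nu$ is not consumed by a constant implicit in the Sobolev embedding. Beyond that, the rest is a routine application of the implicit function theorem already carried out in Theorem \ref{thm:convergencerate}.
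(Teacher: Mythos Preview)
Your proposal is correct and follows essentially the same approach as the paper: both arguments use the global-minimizer property to get uniform $H^1\cap L^\infty$ bounds on $(y(\varepsilon),p(\varepsilon))$, then invert $\partial_x F$ via a Schur complement and a Neumann series, exploiting the $1/\nu$-smallness of the off-diagonal block $B_\varepsilon$. The only cosmetic difference is that the paper takes the Schur complement with respect to the other diagonal block (yielding $T_\varepsilon + B_\varepsilon T_\varepsilon^{-1}C_\varepsilon$ instead of $T_\varepsilon - C_\varepsilon T_\varepsilon^{-1}B_\varepsilon$), but since both diagonal blocks equal $T_\varepsilon$ the two computations are interchangeable; your observation about the typo in the hypothesis of Theorem~\ref{thm:convergencerate} is also correct.
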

\begin{proof}
	We show that for $\nu$ large enough, the derivative $	\frac{\partial}{\partial x} F (x(\varepsilon),\varepsilon)$ is continuously invertible and its inverse is bounded independently of $\varepsilon$. In order to do that, we consider the Schur complement of $\frac{\partial}{\partial x} F (x(\varepsilon),\varepsilon)$ given by
	\begin{equation}\label{eq:condition_invert}
		S = (A+\varphi'(y)) + \tfrac{1}{\nu}(\id - P_\varepsilon'(-\tfrac{p}{\mu})(A + \varphi'(y))^{-1}(	\varphi''(y)p - \id)
	\end{equation}
	Since the sequence $x(\varepsilon)$ of global minimizers is uniformly bounded in $V$ w.r.t. $\nu$ and $\varepsilon$ (by similar arguments as in Lemma \ref{existence of weak cumulation}), there exists $\nu>0$ that ensures 
	\begin{equation}
		\big\|\tfrac{1}{\nu}(\id - P_\varepsilon'(-\tfrac{p}{\mu})(A + \varphi'(y))^{-1}(	\varphi''(y)p - \id)\big\| \leq q \|(A+\varphi'(y))^{-1} \|^{-1}
	\end{equation}
	for some $q\in(0,1)$. Using convergence of the corresponding Neumann series, this implies that $S$ is invertible with $\|S^{-1}\|\leq \tfrac{1}{1-q}\| (A+\varphi'(y))^{-1}\|$ bounded independently of $\varepsilon$.
\end{proof}
In Figure \ref{fig: convergence rates}, we numerically illustrate the convergence of the regularized solutions $x(\varepsilon)$ to the solution $x$ of the nonsmooth optimality system \eqref{eq:non-smooth_OS_lam}. 
In order to see the rate $ \mathcal{O}(\sqrt{\varepsilon})$ in a numerical experiment, the non-smoothness has to be active on gridpoints, otherwise faster rates of convergence could be observed.
For this reason, we consider a test configuration with an optimal control function $u$ that is zero inside the domain $\tilde \Omega=[0.25,0.75]^2\subset \Omega=[0,1]^2$ (see the right plot in Figure \ref{fig: convergence rates}). 
In particular, the adjoint state for our example is $p(x_1,x_2)= \mu s(x)s(y)$, where $s(x)=\chi_{\tilde \Omega} \max\{1,2 |\sin(2\pi x)|\}+\chi_{\Omega\setminus\tilde \Omega} 2 |\sin(2\pi x)| $ for $x\in [0,1]$. 
The corresponding desired state $y_d$ is given by \eqref{eq:ydpppp} using the corresponding solution of \eqref{eq:pppp}.
\begin{figure}[t]
	\centering
	\includegraphics[width=0.47\textwidth]{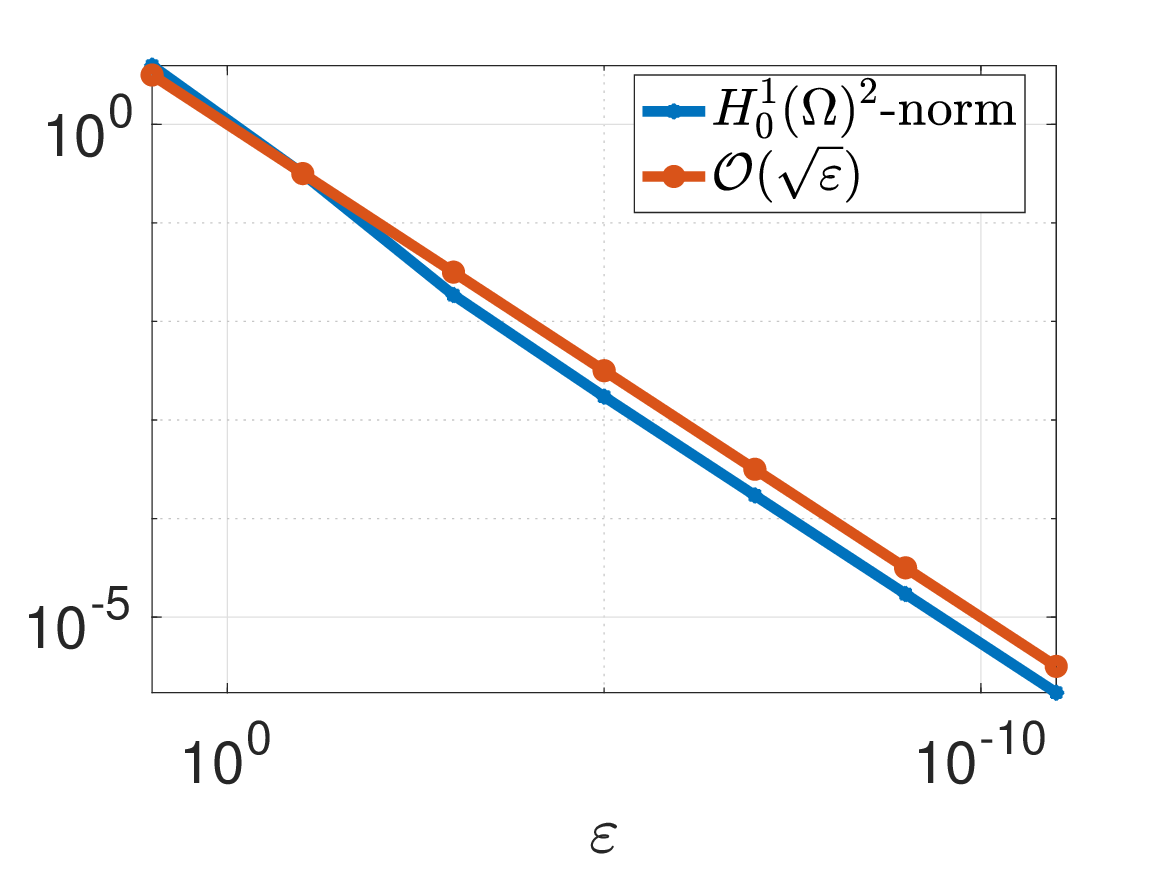}
	\includegraphics[width=0.47\textwidth]{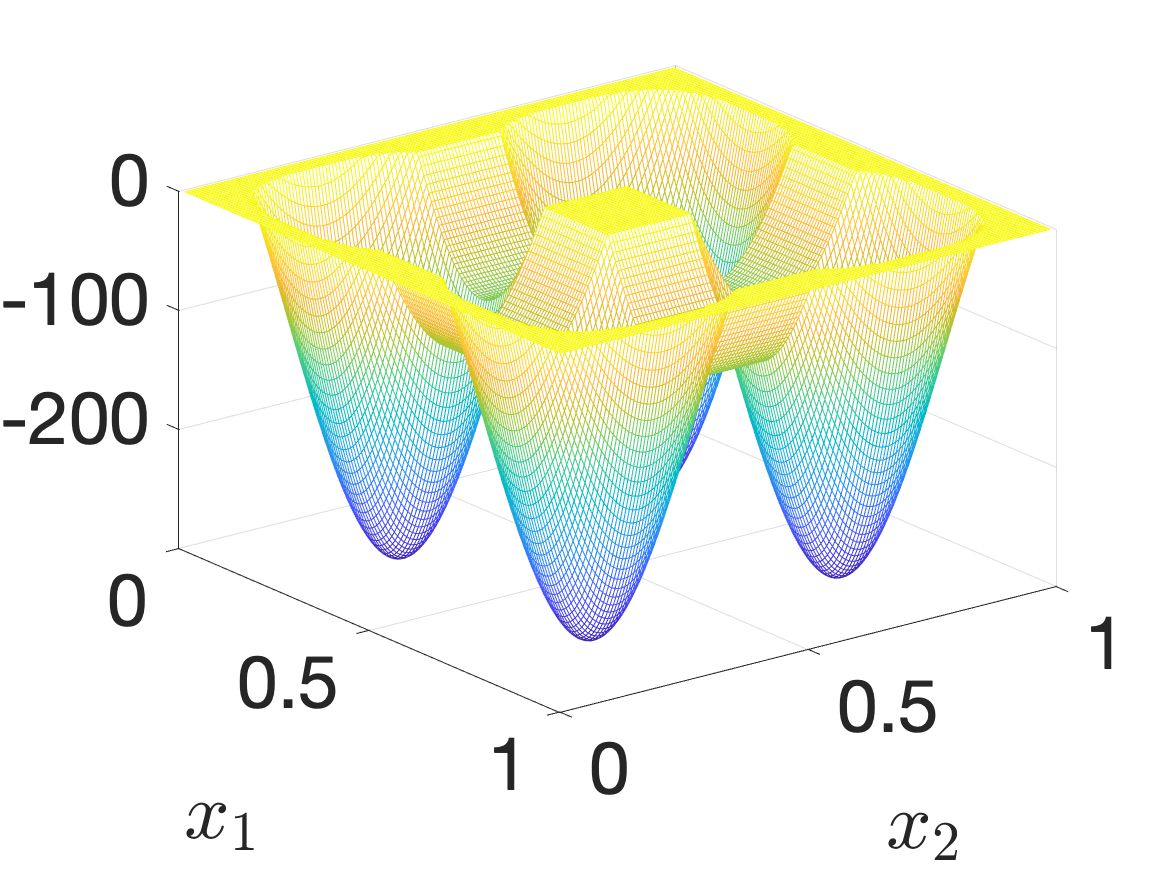}
	\caption[a]{Left: the $H^1_0(\Omega)^2$-norm of the difference $x(\varepsilon)-x$. Right: the corresponding optimal control.}\label{fig: convergence rates}
\end{figure}
Note that the interplay of the two regularization parameters $\nu$ and $\mu$ ultimately determines the sparsity pattern of the optimal control, where increasing $\nu$ yields a more distributed support of a smeared optimizer and increasing $\mu$ decreasing its support with less diffusive behavior in the solution.
In the smoothed case, the regularization parameter additionally influences the sparsity pattern of the solutions to the regularized optimality system, as one can tell from the smoothed stationarity condition in the last line of \eqref{eq:OS_regularized}.
However, for decreasing the smoothing parameter, the sparsity structure of the limiting solution to the nonsmooth system is typically recovered.
See Figure~\ref{fig: sparsity test} for the resulting optimal controls for the test configuration corresponding to \cite[Example 1]{Stadler:2009:1} for different values of the $L^1$-penalization parameter $\mu$ and different values of the regularization parameter $\varepsilon$. 
When $\varepsilon$ is rather large ($\varepsilon=1$, first row) the sparsity structure of the limiting control is essentially lost. 
However, for the chosen smaller values the correct sparsity is immediately recovered.
For $\varepsilon=10^{-2}$ (second row), the boundary of the sparsity region (support of the optimal control functions) is not yet sharp, but for $\varepsilon=10^{-3}$ and smaller regularization, the correct structure (third row) is obtained and, visually, the results can not be distinguished from the results corresponding to very small values of $\varepsilon$, like $\varepsilon=10^{-11}$.

\begin{figure}[t]
	\centering
	
	\includegraphics[width=0.3\textwidth]{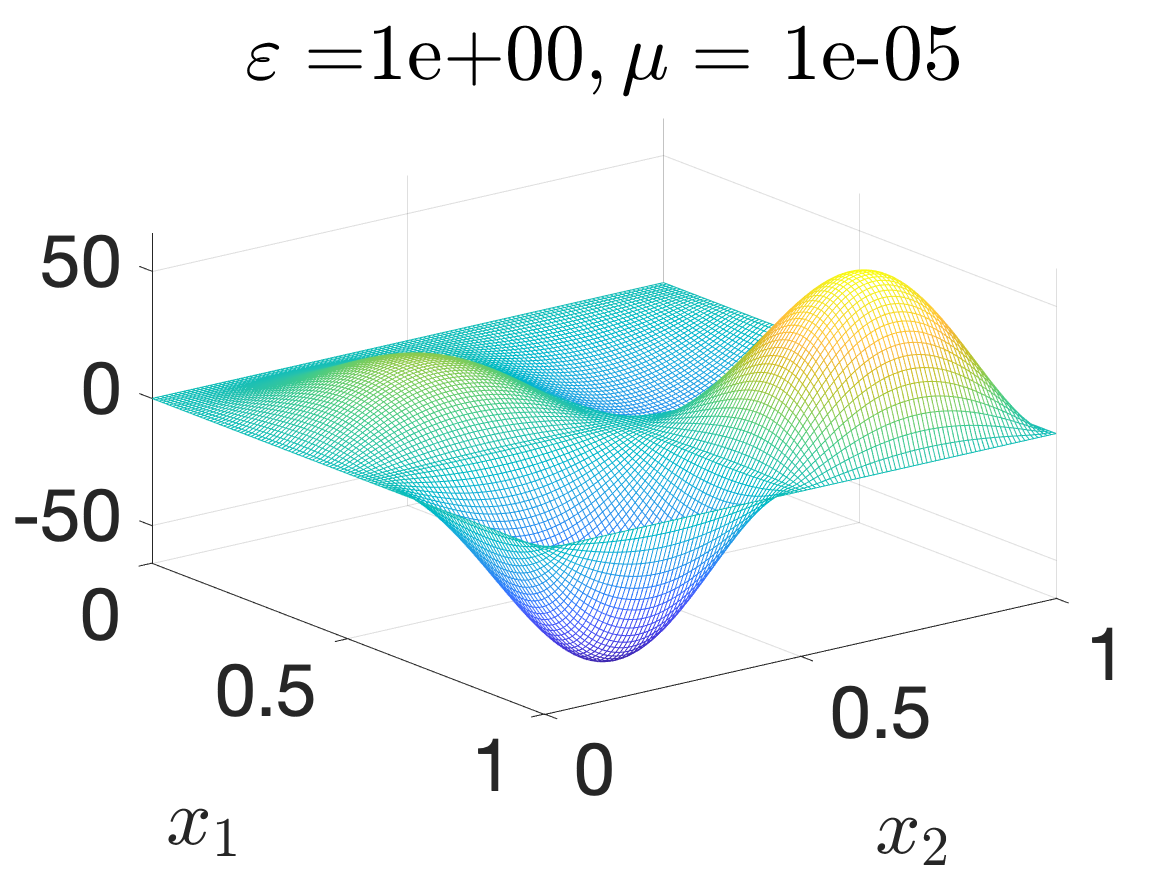}\hfil
	\includegraphics[width=0.3\textwidth]{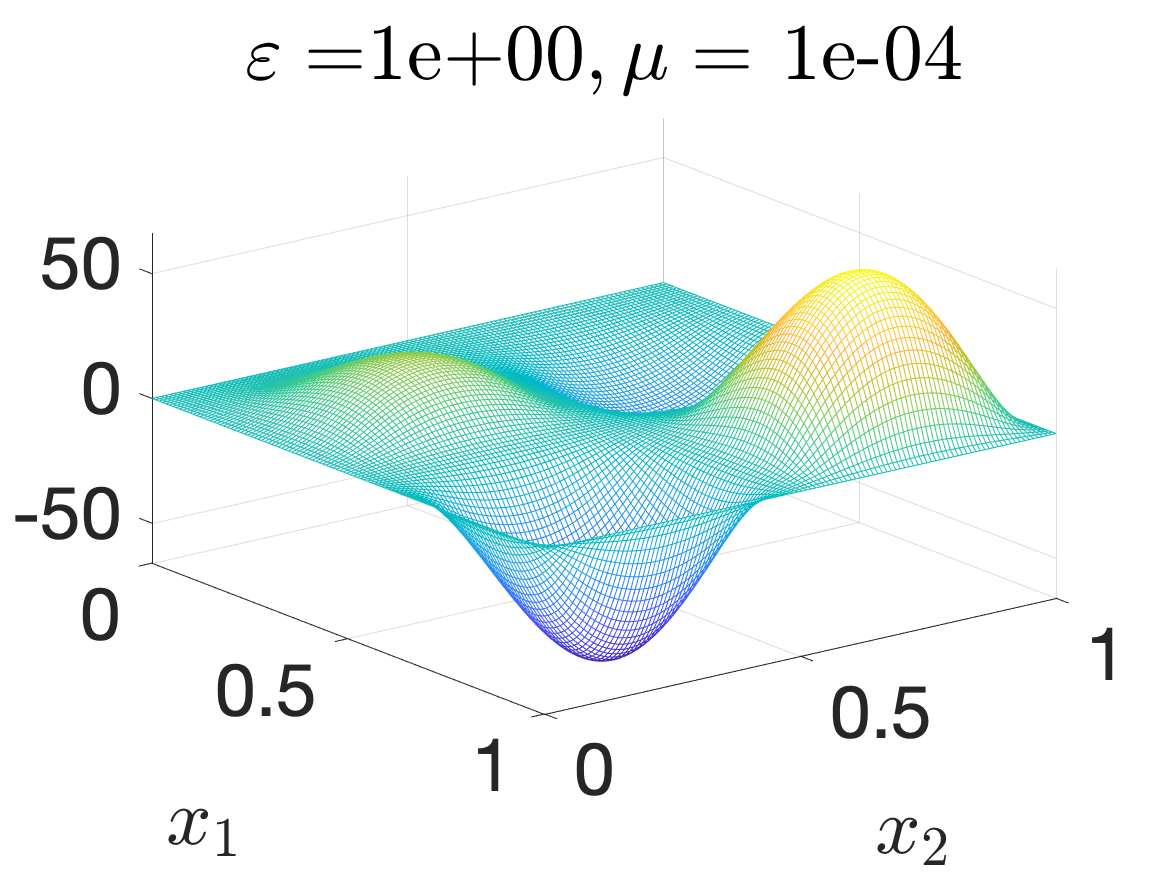}\hfil
	\includegraphics[width=0.3\textwidth]{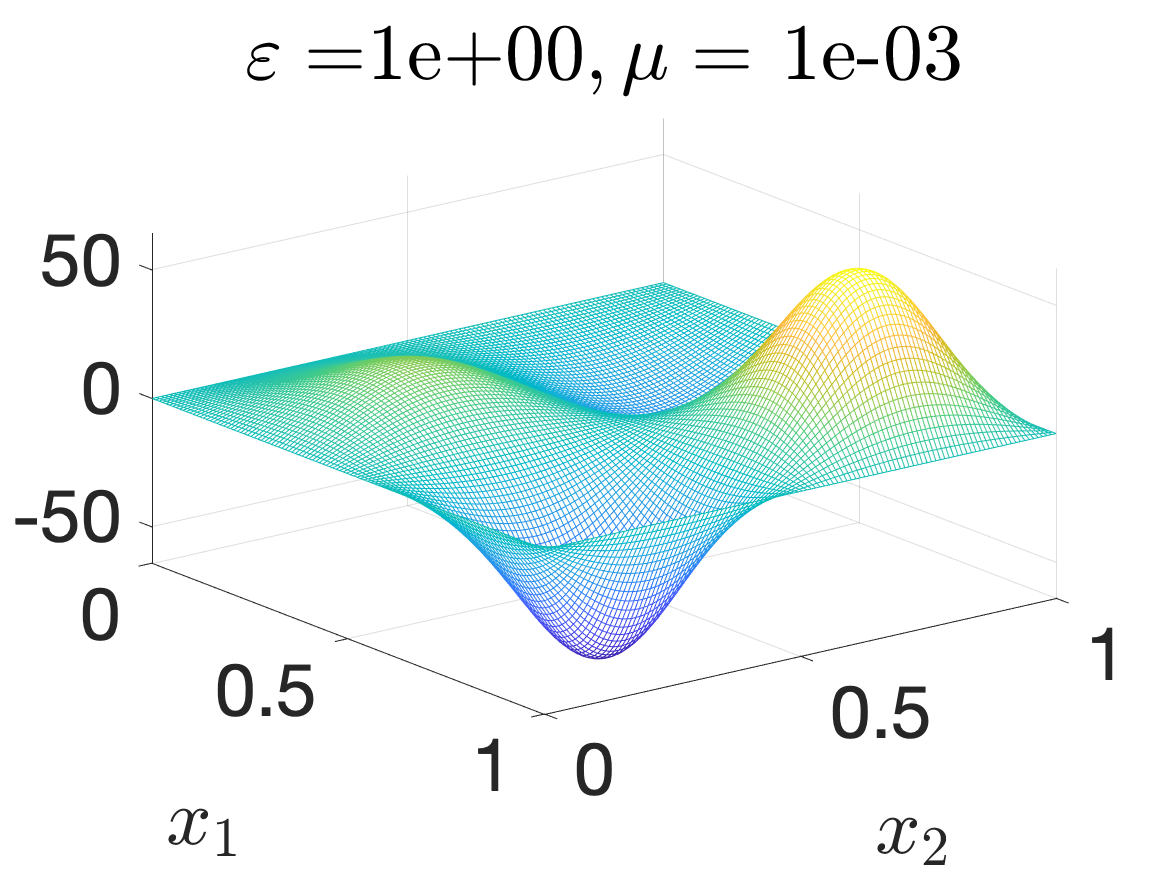}\\
	
	\includegraphics[width=0.3\textwidth]{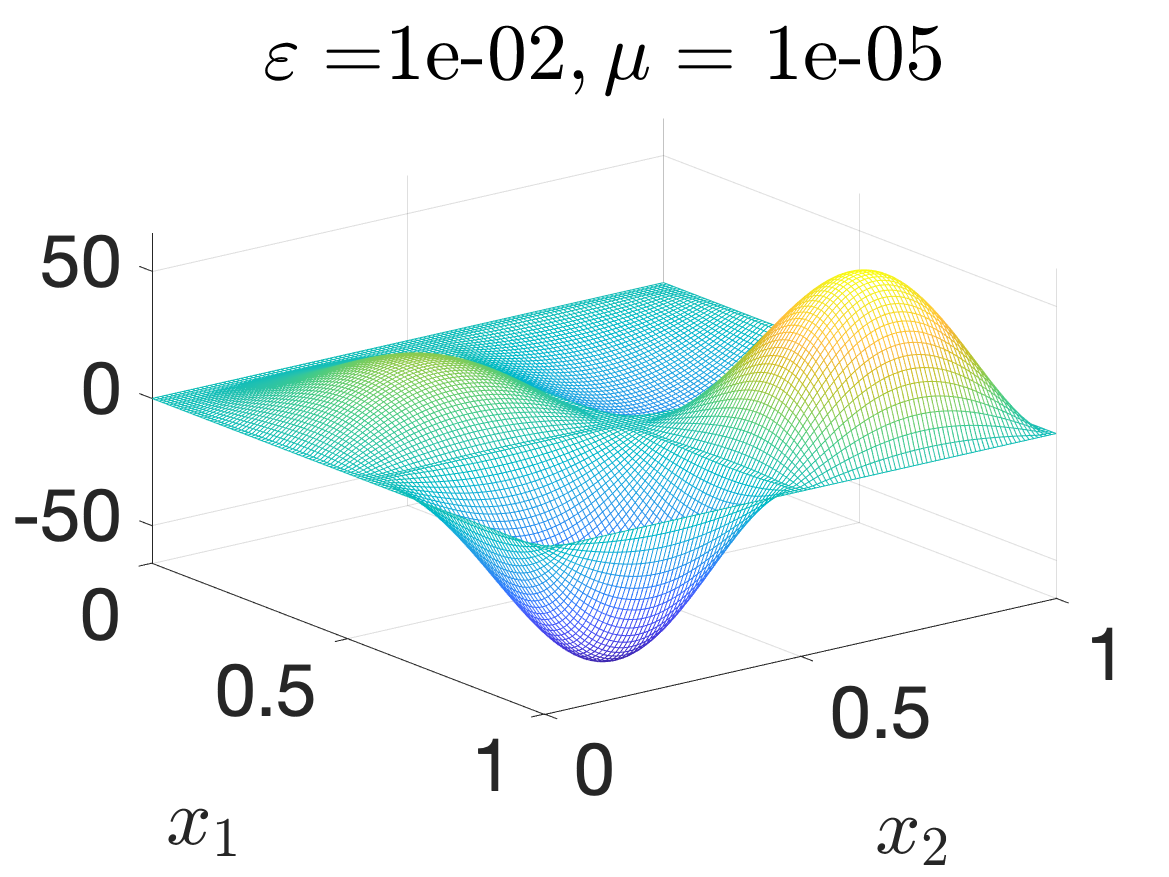}\hfil
	\includegraphics[width=0.3\textwidth]{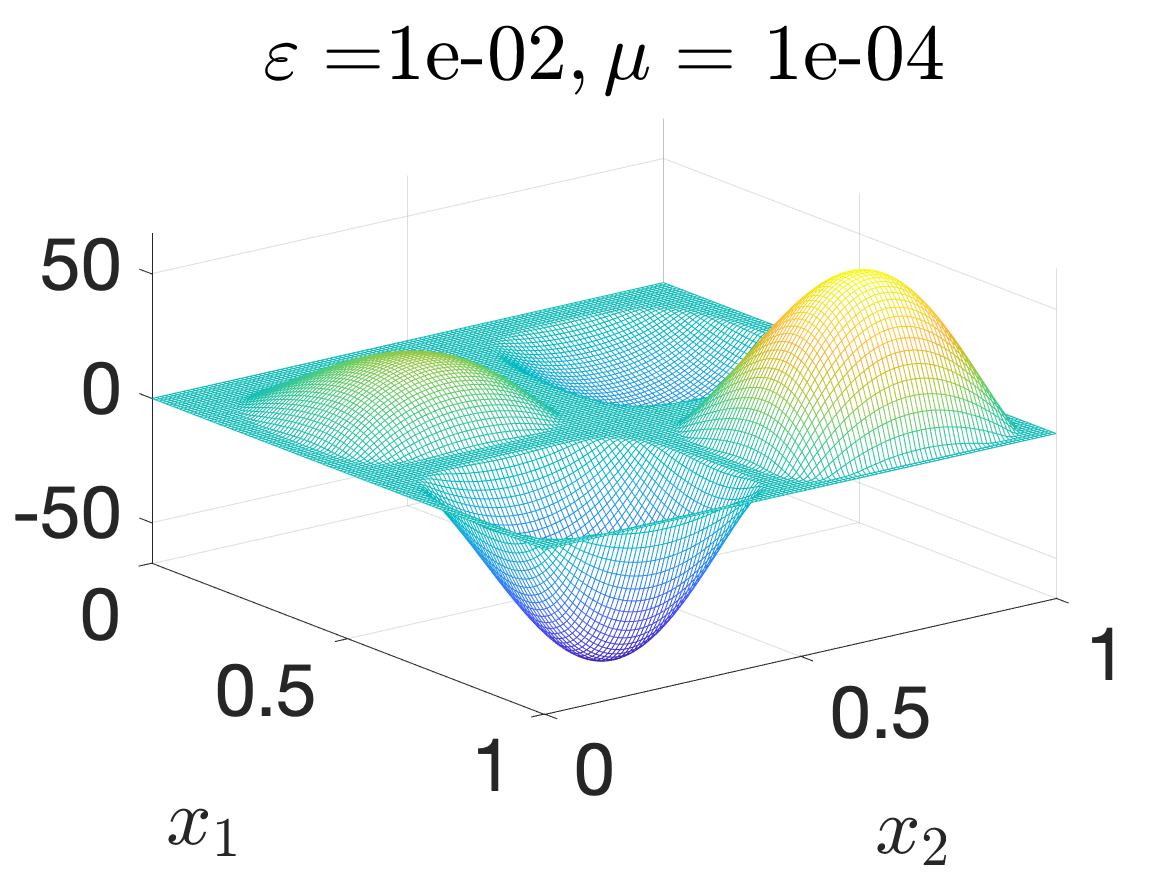}\hfil
	\includegraphics[width=0.3\textwidth]{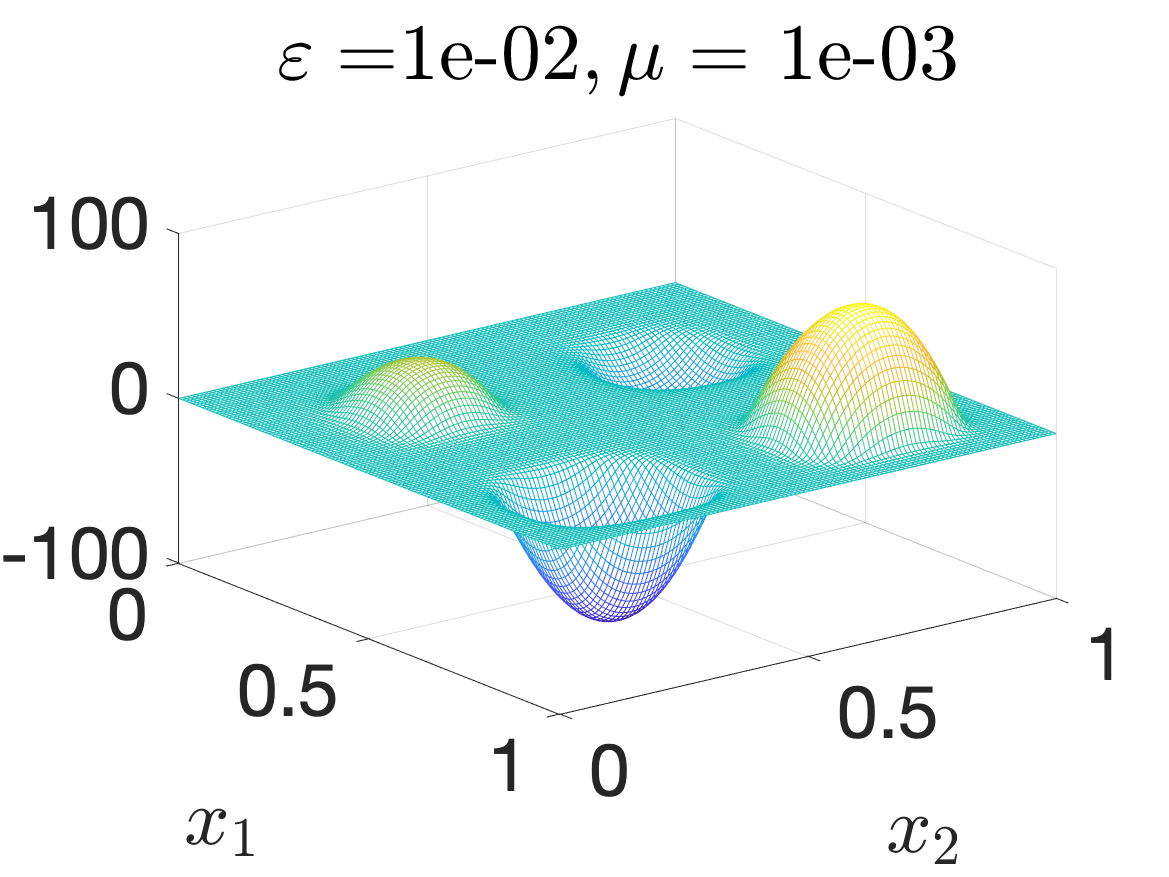}\\
	
	\includegraphics[width=0.3\textwidth]{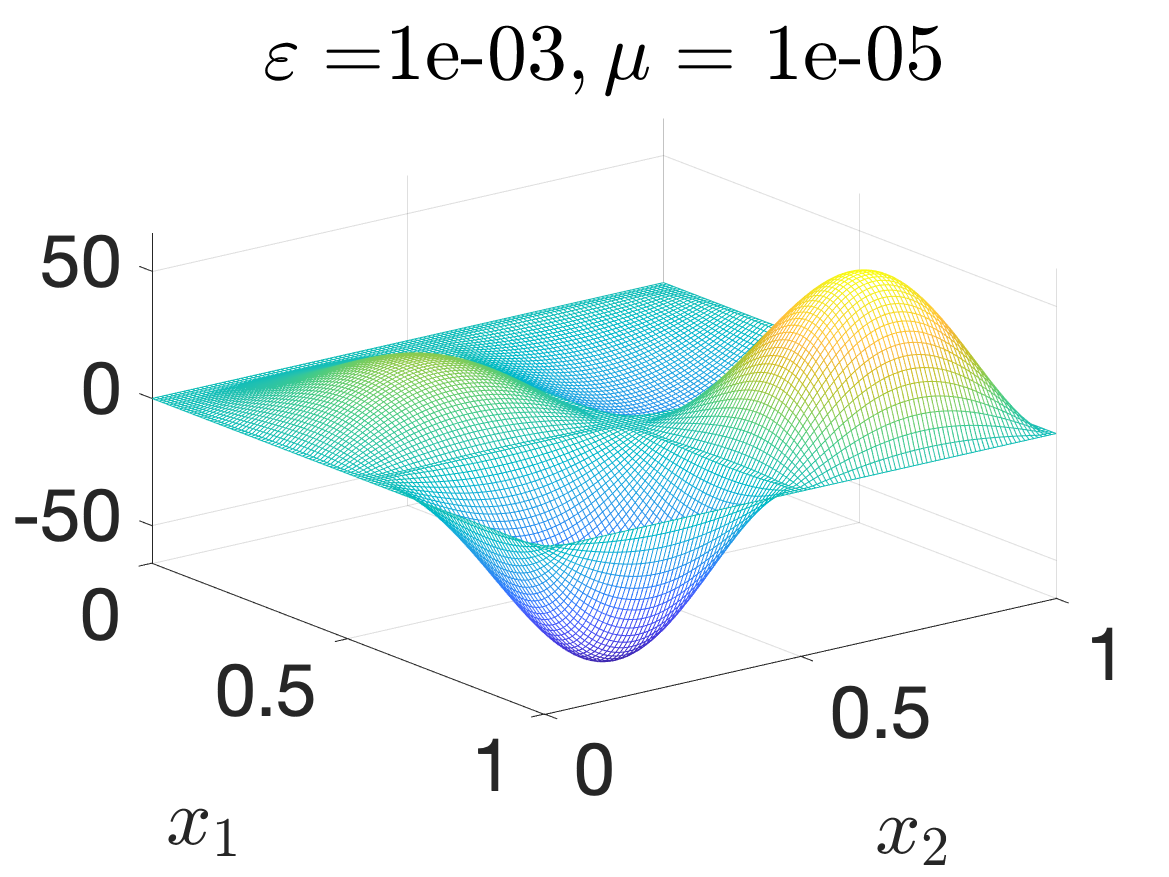}\hfil
	\includegraphics[width=0.3\textwidth]{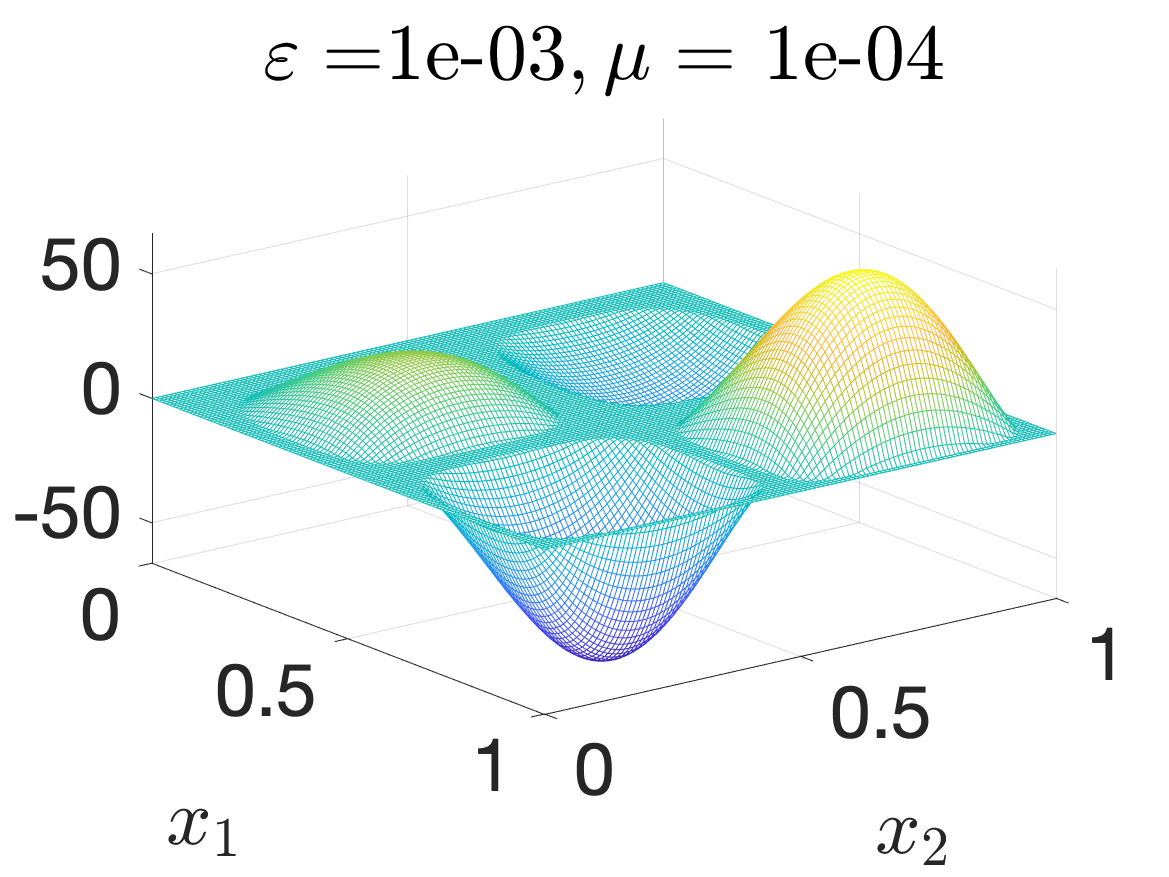}\hfil
	\includegraphics[width=0.3\textwidth]{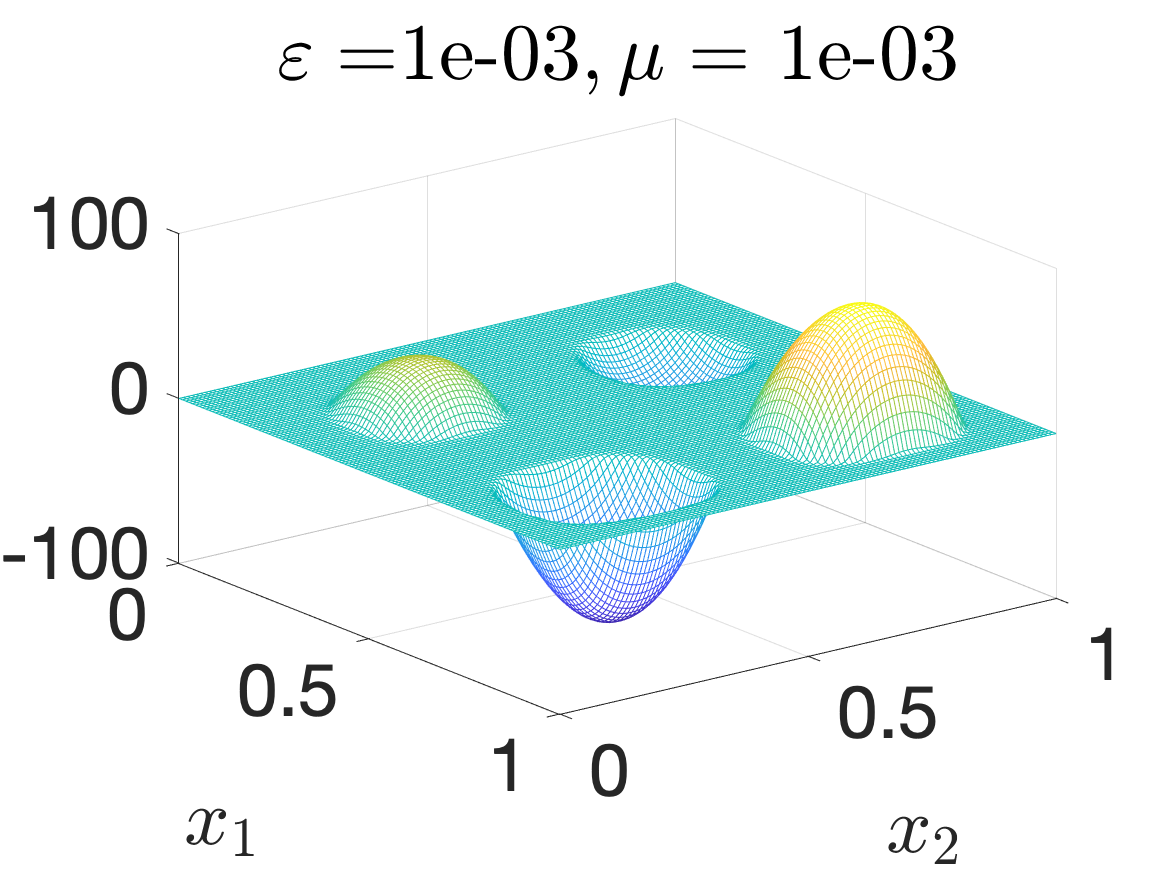}\\

 \includegraphics[width=0.3\textwidth]{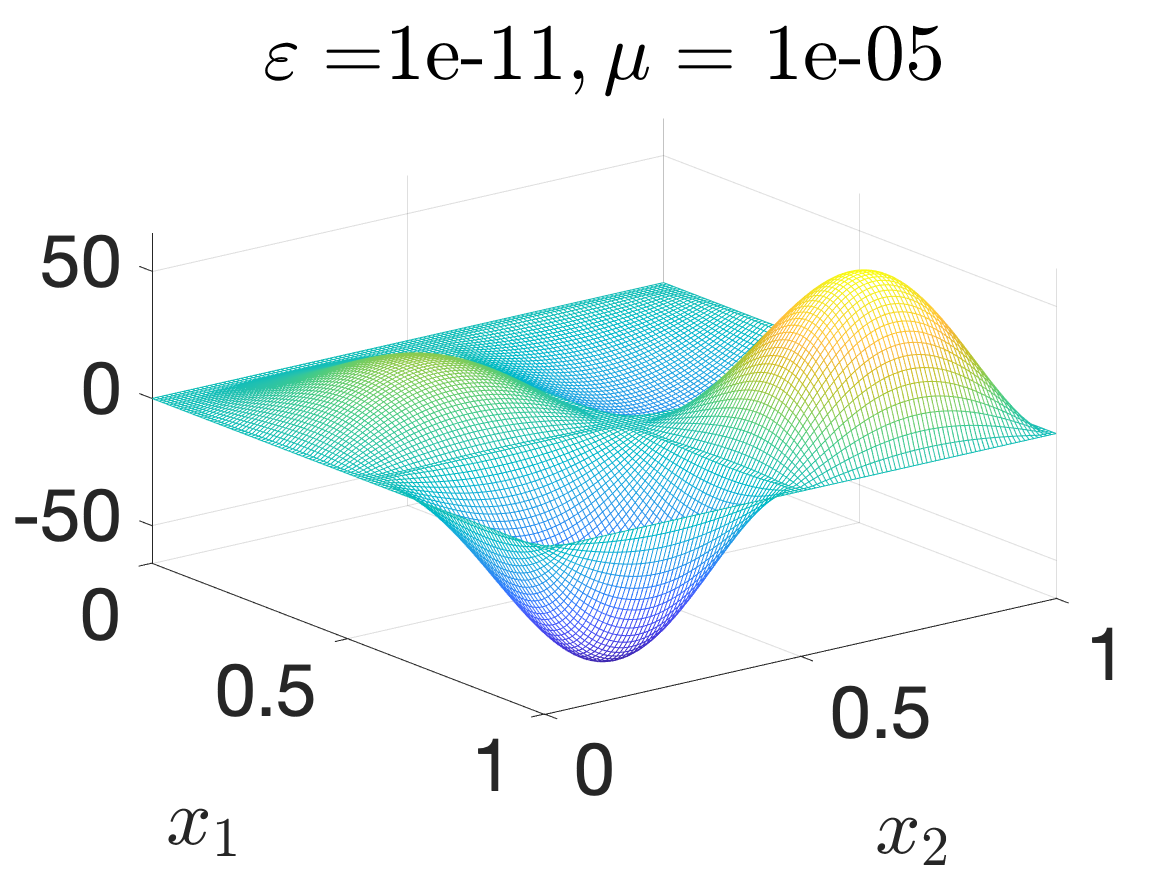}\hfil
	\includegraphics[width=0.3\textwidth]{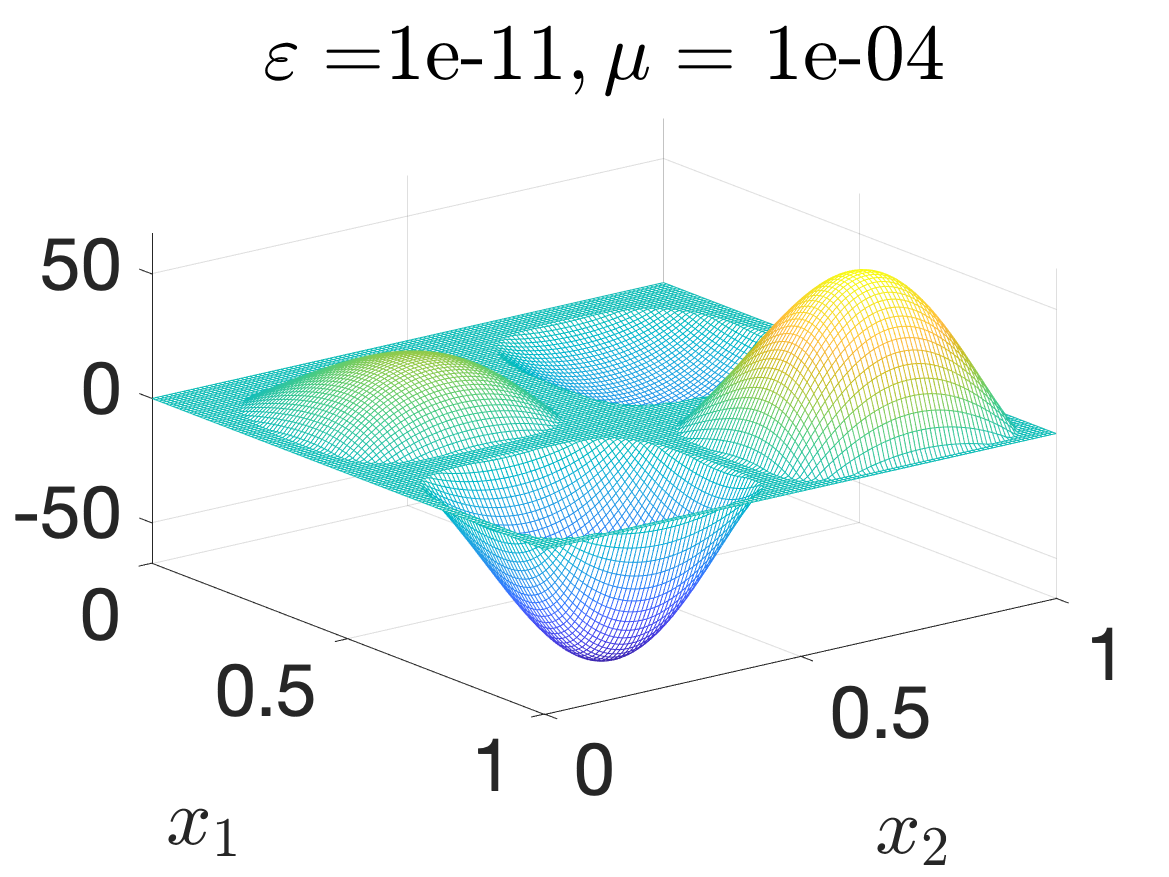}\hfil
	\includegraphics[width=0.3\textwidth]{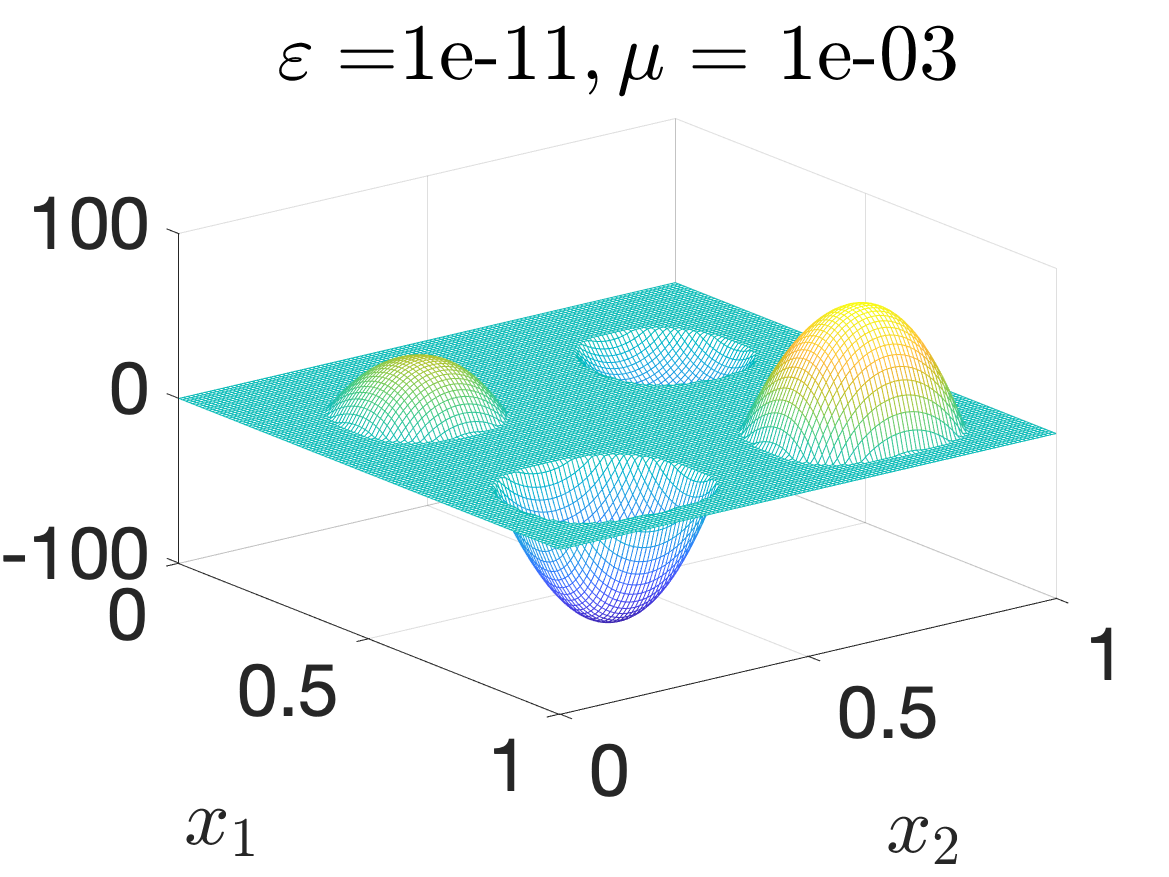}\\
	\caption[a]{Optimal control functions for test configuration \cite[Example 1]{Stadler:2009:1} corresponding to $\mu \in \{10^{-5},10^{-4},10^{-3} \}$ (from left to right)  and to $\varepsilon \in \{ 1,10^{-2},10^{-3},10^{-11} \ \}$ (from top to bottom).}\label{fig: sparsity test}
\end{figure}

\FloatBarrier

\section{Computational framework and numerical results}\label{sec:FrameworkAndNumerics}
In this section, we present the main components of our approach for finding solutions to the (smoothed) optimality system(s).
These are Newton methods, continuation strategies, and domain-decomposition (linear/nonlinear) preconditioning.
Specifically, in Subsection \ref{sec:NewtonAndContinuation}, we discuss the use of a damped Newton method for the (monolithic) solution of the smoothed system.
When the smoothing parameter $\varepsilon$ is small, the behavior to be expected from the damped Newton is the same as that of a damped semi-smooth Newton method to the unsmoothed system. 
Thus, we propose a continuation strategy in the smoothing parameter and address the benefits of augmenting the straightforward Newton approach by this technique.
This idea will be combined with a nonlinear preconditioning approach based on the RASPEN domain decomposition method in Subsection \ref{sec:NonPrec}.
In order to facilitate a fair comparison to a sophisticated computational framework without nonlinear preconditioning, we will employ the RAS method as a linear preconditioner for solving the linear systems within the monolithic Newton.

We will investigate the algorithmic and numeric performance of the combinations of these approaches.
We employ a model problem to examine the performance.
Specifically, we fix the unit square domain $\Omega=(0,1)^2$, the laplacian $A=-\Delta$ and the nonlinearity $\varphi(y)=\kappa (y^3+\exp(\kappa y))$.
The problem parameters are set to $\kappa=0.1$, $\nu= 10^{-6}$ and $ \mu=1$.
We fix $f\equiv0$, set
\begin{equation}\label{eq:ppp}
	\bar p(x_1,x_2) = 1.3\mu \sin(2\pi\tilde{k} x_1)\sin(2\pi \tilde{k} x_2),
\end{equation}
for $\tilde{k}=5$ and compute $\bar y$ as the solution of
\begin{equation}\label{eq:pppp}
	A\bar y + \varphi(\bar y)-f -\frac{1}{\nu}\bigg(\bar{p}+\mu P_\varepsilon\bigg(-\frac{\bar{ p}}{\mu}\bigg)\bigg) =0,
\end{equation}
for $\varepsilon = 10^{-15}$ and set $y_d$ as
\begin{equation}\label{eq:ydpppp}
	y_d = -Ap -\varphi'(y)\bar p + \bar y.
\end{equation}
The constructed quantities are depicted in Figure~\ref{fig: computed optimal test}. Proceeding this way guarantees that $\bar y$ and $\bar p$ are solutions to the first-order optimality system and by choosing $\bar p$, we can guarantee that the nonlinearity and non-differentiability in the projection operator become relevant, as $p$ crosses the thresholds of $-1$ and $1$ in various sections of the domain.
Our setting is discretized using finite differences with $N=450$ discretization points per dimension and P1 finite elements.

\begin{figure}[t]
	\centering
	\includegraphics[width=0.37\textwidth]{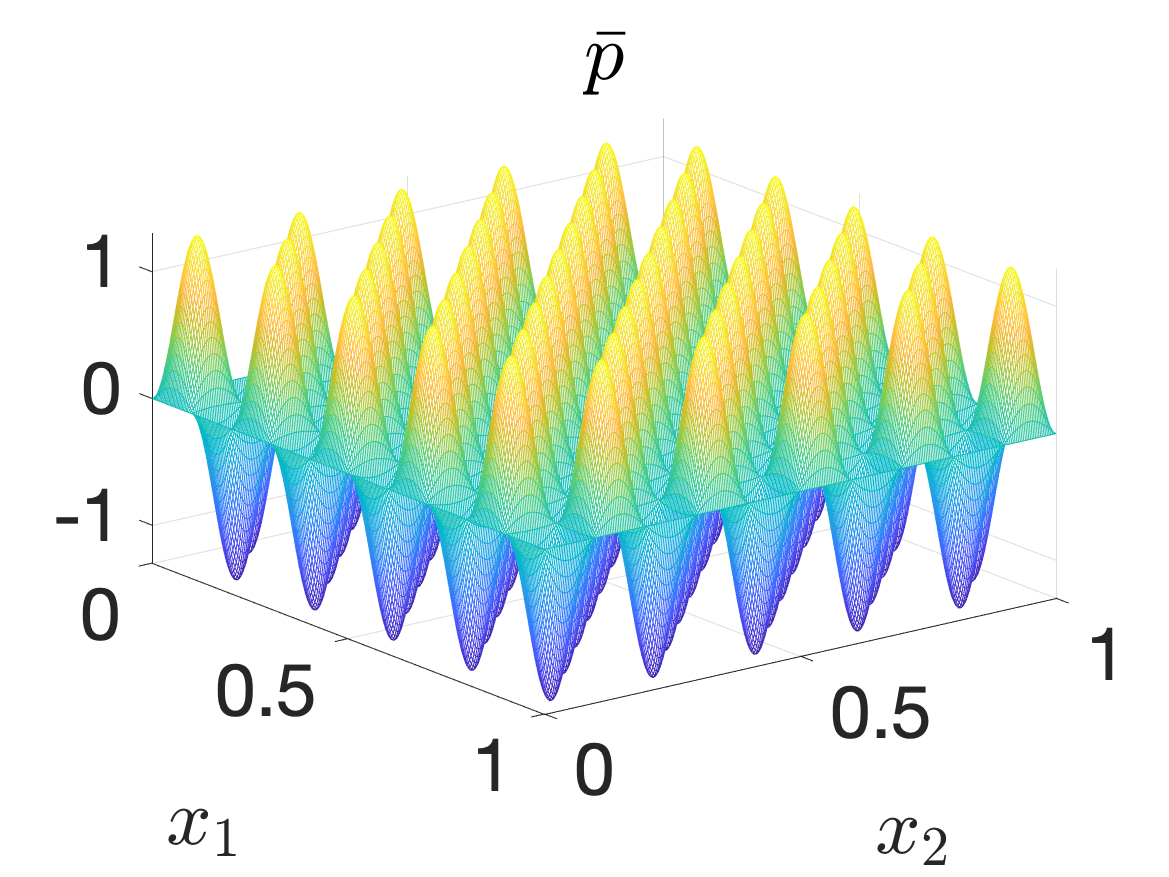}\hfil
	\includegraphics[width=0.37\textwidth]{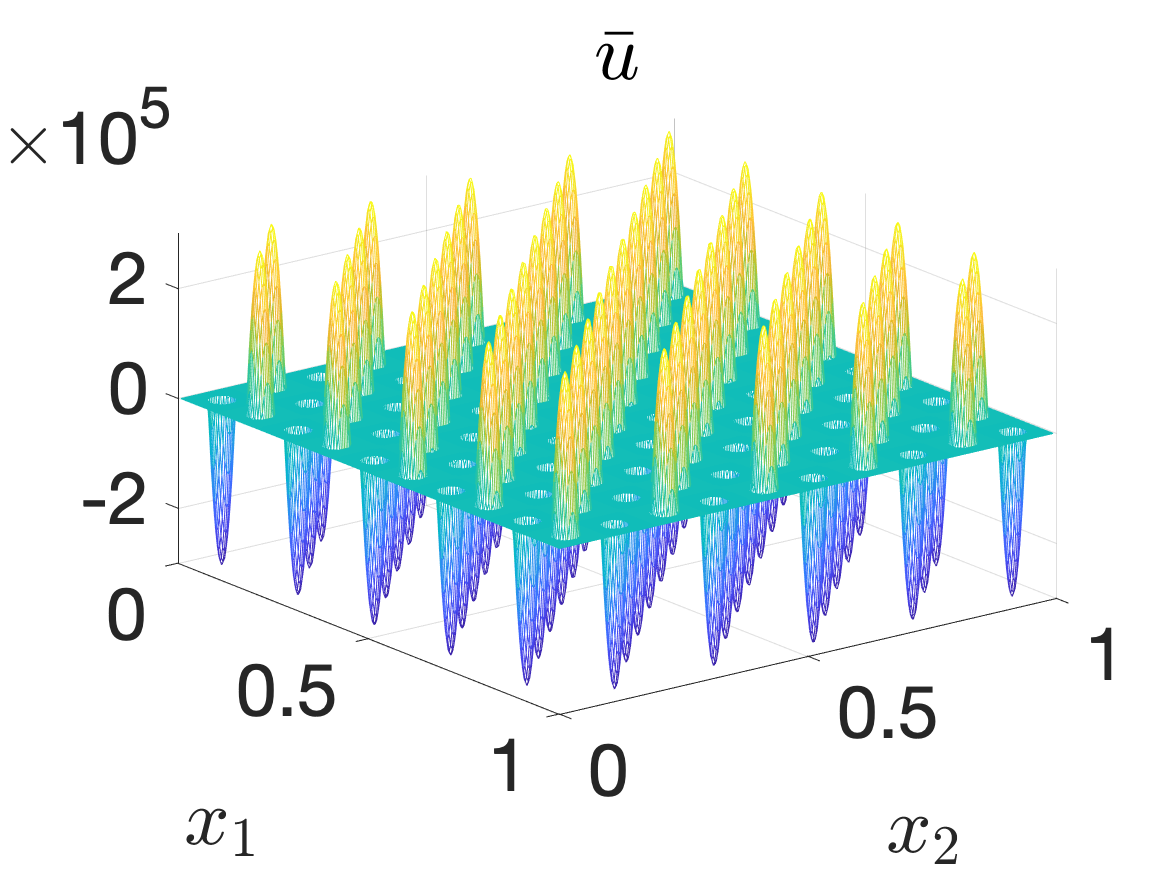}\\
	\includegraphics[width=0.37\textwidth]{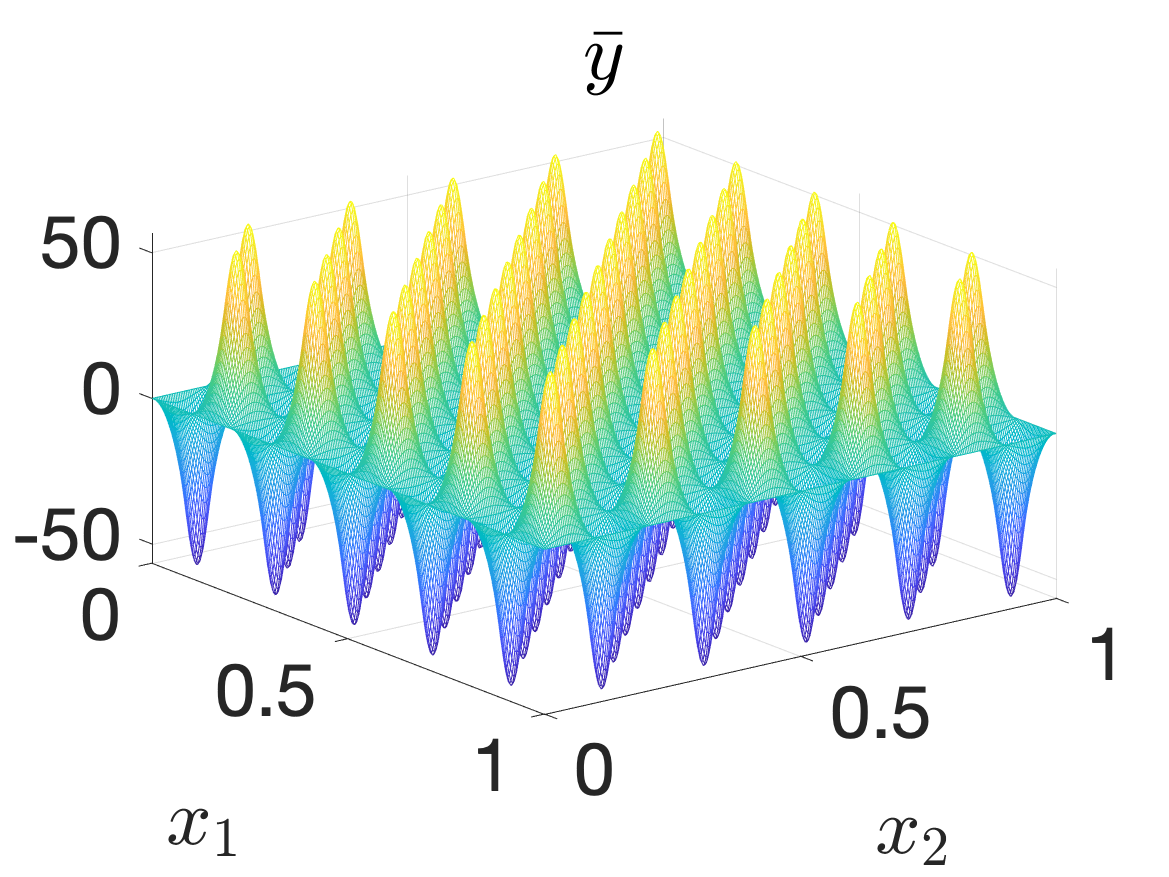}\hfil
	\includegraphics[width=0.37\textwidth]{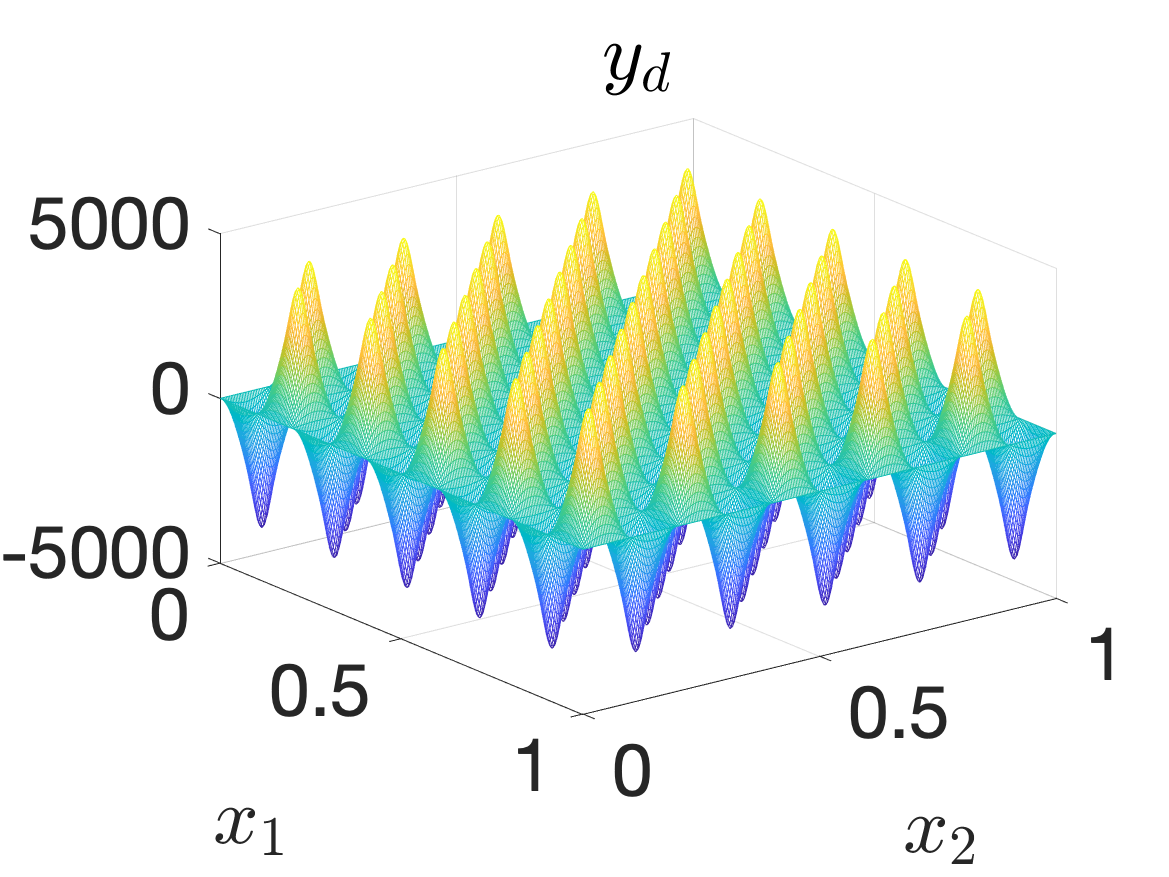}
	\caption[a]{The constructed optimal control $\bar u$, optimal state $\bar y$ and optimal adjoint state $\bar p$ and desired state $y_d$.}\label{fig: computed optimal test}
\end{figure}

\subsection{Damped Newton and continuation}\label{sec:NewtonAndContinuation}
In this section, we present a damped Newton method for the solution of the regularized optimality system \eqref{zeropoint} and a continuation strategy in the smoothing parameter $\varepsilon$. We will denote $F_\varepsilon(x) = F(\varepsilon, x)$ for $x\in V$ for the remainder of this paper.
Owing to the regularization, the map ${F}_\varepsilon$ is differentiable
and it is possible to use a classical damped Newton method to solve \eqref{zeropoint}.
Given an iterate $x^k$, the new approximation $x^{k+1}$ is obtained as
\begin{equation}\label{eq:Newton_step}
	x^{k+1}=x^k+\alpha_k \, d^k.
\end{equation}
Here, the direction $d_k$ is computed by solving the Newton system
\begin{equation}\label{linNewtonPrec}
	F_\varepsilon'(x^k)d^k=-{F}_\varepsilon(x^k).
\end{equation}
In \eqref{eq:Newton_step}, $\alpha_k \in (0,1]$ is a damping parameter
that is computed by a backtracking approach to satisfy the condition
\begin{equation}\label{eq:line_search}
	\|F_{{\varepsilon}}(x^k+\alpha_k\,d^k)\| \leq \sigma\|F_{{\varepsilon}}(x^k)\|,
\end{equation}
where $\sigma \geq 1$ is a relaxation parameter. Note that \eqref{eq:line_search} has the
the goal of avoiding excessively large growth of the residual value $F_{{\varepsilon}}(x^k+\alpha_k\,d^k)$ and is less restrictive than the requirement that it must decay
monotonically along the iterations.

The first and second rows of Table~\ref{tab:MonolithicNewtonAndContinuation} show the number of Newton iterations needed to solve \eqref{zeropoint} up to absolute or relative tolerance $\mbox{tol} = 10^{-10}$ for different values of the smoothing parameter $\varepsilon=\varepsilon_{\min}$.
Seeing as the number of required Newton iterations increases as $\varepsilon$ decreases, it is apparent that the smoothing has a regularizing effect.
This suggests that using a continuation approach on the regularization parameter $\varepsilon$ can be beneficial for the overall performance of the method.
Specifically, for a given target value $\varepsilon_{\min} \in (0,1]$ of the smoothing parameter, we modify the computations \eqref{eq:Newton_step}--\eqref{linNewtonPrec} in the Newton iteration by starting with a rather large initial smoothing parameter $\varepsilon_0=1$ and successively reducing $\varepsilon$ at each iteration using the update $\varepsilon_{k+1} = \max\{\gamma \varepsilon_k, \varepsilon_{\min}\}$, where $\gamma \in(0,1)$ is a parameter controlling the rate at which the sequence $(\varepsilon_k)_k$ decays towards $\varepsilon_{\min}$. This procedure is summarized in Algorithm~\ref{alg:epsilonContinuation}.
Note that if $\varepsilon_0=\varepsilon_{\min}$ is chosen, then no continuation is performed and Algorithm~\ref{alg:epsilonContinuation} corresponds exactly to the Newton method \eqref{eq:Newton_step}-\eqref{linNewtonPrec} applied to the system \eqref{zeropoint} with $\varepsilon = \varepsilon_0$.
\begin{algorithm}[t]
	\caption{Monolithic Newton with $\varepsilon$-continuation and relaxed backtracking linesearch}
	\begin{algorithmic}[1]
		\Require 
		Initial guess $x^0$
		,
		backtracking relaxation $\sigma \geq 1$
		,
		initial value $\varepsilon_0 > 0$
		,
		target regularization $\varepsilon_{\min}$
		,
		continuation parameter $\gamma \in (0,1]$,
		tolerance of convergence $\mbox{tol}>0$.
		
		\State Set $k=0$. 
		\While{$\|F_{{\varepsilon_k}}(x^k)\|> \max\{\mbox{tol},{\mbox{tol}}{\|F_{{\varepsilon_0}}(x^0)\|}\}$ and $k\leq k_{\max}$}
		\State Solve $F'_{{\varepsilon_k}}(x^k)d^k=-F_{{\varepsilon_k}}(x^k)$. \label{alg:epsilonContinuation:linSysSolve}
		\State Set $\alpha_k=1 $.
		\While{$\|F_{{\varepsilon_k}}(x^k+\alpha_kd)\|>\sigma\|F_{{\varepsilon_k}}(x^k)\|$}
		\State $\alpha_k=\frac{\alpha_k}{2}$.
		\EndWhile
		\State Update $x^{k+1} = x^{k}+\alpha_k \, d^{k}$,
		$\varepsilon_{k+1} = \max\{\gamma \varepsilon_k, \varepsilon_{\min}\}$, and $k = k+1$.
		\EndWhile
	\end{algorithmic}
	\label{alg:epsilonContinuation}
\end{algorithm}
\begin{table}[t!]
\centering
\begin{tabular}{||l|| c|c|c|c|c|c||}
\hline
Method$\backslash$ $\varepsilon_{\min}$ & 1&1e-3 & 1e-5&1e-10& 1e-13 &  1e-15  \\
\hline\hline
Newton &  11 & 31 & 35 & 41 & 40 & 40     \\
Newton$_\varepsilon$ &  11 & 22 & 20 & 21 & 21 & 23   \\
\hline  \hline
\end{tabular}
\caption{Iterations of monolithic Newton method (Newton) with fixed values $\varepsilon=\varepsilon_{\min}$ and monolithic Newton method with continuation (Newton$_\varepsilon$) starting from $\varepsilon = 1$ down to $\varepsilon_{\min}$ according to Algorithm \ref{alg:epsilonContinuation}.}
\label{tab:MonolithicNewtonAndContinuation}
\end{table}
The computational cost of one iteration of Algorithm \ref{alg:epsilonContinuation} is dominated by the cost of solving the linear system \eqref{linNewtonPrec} in Step \ref{alg:epsilonContinuation:linSysSolve}.
An efficient approach to solving the system is using iterative Krylov methods like MINRES or more generally GMRES (see, e.g., \cite{ciaramella2022iterative}), which was employed as the solver in the results of Table~\ref{tab:MonolithicNewtonAndContinuation}. In order to improve the performance of GMRES, we incorporate a RAS preconditioner. Table~\ref{tabl: linear_prec_GMRES} reports the average number of GMRES iterations, with and without
the use of the RAS preconditioner, corresponding to the same problem solved in Table~\ref{tab:MonolithicNewtonAndContinuation}.
While a side effect of the continuation strategy appears to be a minimal reduction of the number of average GMRES iterations, the effect is obviously much larger for the RAS preconditioning.
\begin{table}[t!]
\centering
\begin{tabular}{||l|| c|c|c|c|c|c||}
\hline
Method$\backslash$ $\varepsilon_{\min}$ & 1&1e-3 & 1e-5&1e-10& 1e-13 &  1e-15  \\
\hline\hline
Newton & 866 & 1247 & 1261 & 1328 & 1341 & 1341       \\
Newton$_\varepsilon$ & 866 & 1166 & 1153 & 1162 & 1167 & 1128   \\ 
Newton$_\text{RAS}$&  28 & 33 & 32 & 33 & 33 & 33\\
Newton$_{\text{RAS},\varepsilon}$&  28 & 32 & 32 & 32 & 32 & 31 \\
\hline  \hline
\end{tabular}
\caption{
Average GMRES iterations of
monolithic Newton (Newton),
monolithic Newton with continuation (Newton$_\varepsilon$),
(linearly) RAS-preconditioned monolithic (Newton$_\text{RAS}$)
and (linearly) RAS-preconditioned Newton with continuation (Newton$_{\text{RAS},\varepsilon}$ ) starting from $1$ to $\varepsilon_{\min}$ for a $2\times 2$-subdomain decomposition of $\Omega$.
}
\label{tabl: linear_prec_GMRES}
\end{table}
\subsection{Nonlinearly preconditioned Newton}\label{sec:NonPrec}
While linear domain decomposition preconditioners can be used to compute the update direction $d^k$
more efficiently, they do not generate better search directions. As a result, they cannot improve the performance of the Newton method e.g. when the initial guess is far from the solution. A related approach to accelerate and robustify the solution procedure with respect to initial guesses is to employ a nonlinear preconditioner. In this approach, one directly transforms the nonlinear system \eqref{zeropoint} and then applies Newton's method on the new transformed problem.
This way, different search directions are obtained that generally improve the convergence behavior of Newton's method; see, e.g., \cite{DGKW16}.
Here, we will outline how to extend the nonlinear RAS preconditioner originally proposed for solving nonlinear PDEs in \cite{DGKW16} to the solution of our (regularized) optimal control problem.

In the Schwarz method for solving \eqref{zeropoint}, one begins with a non-overlapping decomposition of $\Omega$ into $I\in \N$ subdomains $\widetilde{\Omega}_i$, i.e., $\overline{\Omega} = \cup_{i=1}^I\overline{\widetilde{\Omega}}_i$.
Each non-overlapping subdomain $\widetilde{\Omega}_i$ is enlarged by an overlap
to obtain a new subdomain $\Omega_i$ containing $\widetilde{\Omega}_i$.
The subdomains $\Omega_i$ give rise to an overlapping decomposition:
$\Omega = \cup_{i=1}^I \Omega_i$.
Thus, given an initial guess $x_i^0=(y^0_i, p_i^0)\in V_i\coloneqq H^1(\Omega_i)^2$, one iteratively solves the 
weak form of the local subproblems
\begin{equation}\label{eq:RAS_reg_OS_red}
\begin{aligned}
{A} {y^k_i}+\varphi({y^k_i})&=f-\frac{1}{\nu}\bigg({p^k_i}+\mu P_\varepsilon\bigg(-\frac{{p^k_i}}{\mu}\bigg)\bigg) &\text{ in }
\Omega_i, \\
\Big({A} +\varphi'({y^k_i})\Big){p^k_i}&={y^k_i}-y_d &\text{ in }   \Omega_i, \\
y^k_i&=p^k_i = 0 &\text{ on }  \partial\Omega_i\cap \partial\Omega,\\
{y^k_i}&={y^{k-1}_j}, \ {p^k_i}={p^{k-1}_j}&\text{ on }  \
\partial\Omega_i\cap \Omega_j \  (j\neq i),
\end{aligned}
\end{equation}
on the subdomains $\Omega_i$ yielding $x_i^k= (y_i^k,p_i^k) \in V_i$.
The approximation $x_k$ in the entire domain $\Omega$ is obtained as the recombination $x^k = \sum_{i=1}^I \widetilde{P}_i x_i^k$ with the prolongation operators $\widetilde{P}_i\colon V_i \to L^2(\Omega)^2$ defined, for any $v=(v_y,v_p) \in V_i$ and $w \in H_1(\Omega_i)$, as
\begin{equation*}
\widetilde{P}_i(v)
\coloneqq
\begin{bmatrix}
\widetilde{\mathbb{P}}_i(v_y)\\
\widetilde{\mathbb{P}}_i(v_p)
\end{bmatrix}
\qquad
\text{ with }
\qquad
\widetilde{\mathbb{P}}_i(w)
\coloneqq
\begin{cases}
w &\text{a.e. in $\widetilde{\Omega}_i$},\\
0 &\text{otherwise}. \\
\end{cases}
\end{equation*}
To obtain an abstract version of the weak form of \eqref{eq:RAS_reg_OS_red}, we introduce the prolongation operator $P_i\colon V_i \to L^2(\Omega)^2$ defined, for any $v=(v_y,v_p) \in V_i$ and $w \in H^1(\Omega_i)$, as
\begin{equation*}
P_i(v)
\coloneqq
\begin{bmatrix}
\mathbb{P}_i(v_y)\\
\mathbb{P}_i(v_p)
\end{bmatrix}
\qquad \text{ with } \qquad
\mathbb{P}_i(w) \coloneqq
\begin{cases}
w &\text{a.e. in $\Omega_i$},\\
0 &\text{otherwise}, \\
\end{cases}
\end{equation*}
and the restriction operator $R_i\colon L^2(\Omega)^2 \to L^2(\Omega_i)$ defined, for any $v=(v_y,v_p) \in L^2(\Omega)^2$ and $w \in L^2(\Omega)$, as
\begin{equation*}
R_i(v)
\coloneqq
\begin{bmatrix}
\mathbb{R}_i(v_y) \\
\mathbb{R}_i(v_p)
\end{bmatrix}
\qquad \text{ with } \qquad
\mathbb{R}_i(w) \coloneqq w|_{\Omega_i}.
\end{equation*}
Note that $R_i$ maps $H^1(\Omega)$ into $H^1(\Omega_i)$ and that $R_i P_i = I_{V_i}$ for $i=1,\ldots,I$
and $\sum\limits_{i=1}^{I}\widetilde{P}_i R_i=I_{V}$,
where $I_{V_i}$ and $I_{V}$ are identity operators.

Now, given any pair $\widehat{x}$, $\widetilde{x}$ in $V$, it is clear that
\begin{equation*}
P_i R_i \widehat{x} + (I_V - P_i R_i) \widetilde{x} =
\begin{cases}
\widehat{x}|_{\Omega_i} &\text{in $\Omega_i$}, \\
\widetilde{x}|_{\Omega\setminus \Omega_i} &\text{in $\Omega\setminus \Omega_i$}. \\
\end{cases}
\end{equation*}
Accordingly, for any Nemytskii operator associated to a function $\psi\colon \mathbb{R}^2 \rightarrow \mathbb{R}^2$, we have that
\begin{equation*}
R_i\psi(P_iR_i \widehat{x} + (I_V - P_i R_i) \widetilde{x}) = R_i \psi(P_i R_i \widehat{x}).
\end{equation*}
\
Hence, letting $T\colon V' \rightarrow V$ denote the canonical Riesz representation map, a direct calculation shows that
\begin{equation*}
R_i T F_\varepsilon(P_iR_i x^{k} + (I_V - P_i R_i) x^{k-1})
= R_i T F_\varepsilon(P_iR_i x^{k}) + R_i T
\begin{bmatrix}
A & 0 \\
0 & A \\
\end{bmatrix} (I_V - P_i R_i) x^{k-1},
\end{equation*}
where it is clear that, as in \eqref{eq:RAS_reg_OS_red},
$x^{k-1}$ is not affected by the (nonlinear) functions
$\varphi$ and $P_\varepsilon$, but only by the operator $A$.
Thus, the second term in the right-hand side of the above equation represents a weak formulation of the transmission condition in \eqref{eq:RAS_reg_OS_red} written in the residual form.
Accordingly, the weak form of \eqref{eq:RAS_reg_OS_red} can be written as
\begin{equation}\label{eq:RASPEN_weak}
R_i T F_\varepsilon(P_i R_i x^{k} + (I_V - P_i R_i) x^{k-1}) = 0,
\end{equation}
which we assume to be well-posed in the sense that there exists an
$x^{k} \in V$ such that $P_i R_i x^{k} + (I_V - P_i R_i) x^{k-1} \in V$.

Now, we denote by $C_i(x^{k-1})\in V_i$, for $i=1,\dots,I$, the solutions
to the subproblems \eqref{eq:RASPEN_weak}, i.e., they satisfy
\begin{equation}\label{eq: c_i}
\corr_i(C_i(x^{k-1}))\coloneqq R_i T F_\varepsilon(P_i C_i(x^{k-1}) + (I_V - P_i R_i) x^{k-1}) = 0.
\end{equation}
The $C_i(x^{k-1})$ are the local corrections of the Schwarz iteration that can be computed in parallel and that are used to obtain the new approximation as the recombination
\begin{equation}\label{eq:non_RAS}
x^k = x^{k-1} + \sum\limits_{i=1}^{I}\widetilde{P}_i C_i(x^{k-1}),
\end{equation}
yielding a RAS-type fixed-point iteration.\footnote{Note that at the discrete level (using, e.g., finite differences of $P^1$ finite elements) it is possible to obtain an equivalence between parallel Schwarz method iterations \eqref{eq:RAS_reg_OS_red} and the RAS residual form; see, e.g., \cite{Gander2008}.}
If this iteration converges, then the limit point $x$ satisfies
\begin{equation}\label{eq: RASPEN def}
\pre_{\varepsilon}'(x)=\sum\limits_{i=1}^{I}\widetilde{P}_i C_i(x) = 0.
\end{equation}
This equation is the RAS preconditioned form of the original smoothed problem \eqref{zeropoint}, and solving it is equivalent to solving \eqref{zeropoint} directly.
Newton's method applied to \eqref{eq: RASPEN def} is called one-level RASPEN.

The Newton routine requires the computation of the Jacobian of $\pre_{\varepsilon}$.
Using \eqref{eq: RASPEN def}, we get that
\begin{align}
\pre_{\varepsilon}'(x)&=\sum\limits_{i=1}^{I}\widetilde{P}_i C_i'(x). \label{eq: raspen der}
\end{align}
Thus, we compute the derivatives of $C_i(x)$, $i=1,...,I$, by differentiating \eqref{eq: c_i} in $x=x^{k-1}$ to obtain
\begin{align}
C_i'(x)=-\bigg(R_i T F_\varepsilon'\big(x^{(i)}\big)P_i\bigg)^{-1}R_i F_\varepsilon'\big(x^{(i)}\big)
&=-K_i'(C_i(x))^{-1}R_i F_\varepsilon'\big(x^{(i)}\big)\label{DerC}
\end{align}
with $x^{(i)} = x+P_i C_i(x)$, where we used that the Jacobian of $\corr_i$ with respect to $C_i(x)$ is
$\corr_i'(C_i(x))= R_i T F_\varepsilon'(x^{(i)})P_i$.
With the derivative \eqref{eq: raspen der}, one RASPEN step is given by solving the Newton system
\begin{equation}
\label{eq:raspen_newton_update}
\pre_{\varepsilon}'(x^{k-1}) d^k = - \pre_{\varepsilon}(x^{k-1})
\end{equation}
and updating the iterate via
\begin{equation}
x^{k} = x^{k-1} + d^k.
\end{equation}
The whole RASPEN procedure is detailed in Algorithm~\ref{RASPEN}. Note that we apply the $\varepsilon$-continuation strategy only for the solution of the inner problems \eqref{eq: c_i}, since in our numerical experiments RASPEN only needed a few outer iterations to converge (see Table \ref{tab:RASPEN1} in Section \ref{sec:num:RASPEN}).
There are two parts dominating the computational cost of one RASPEN iteration.
The first is the evaluation of $\pre_\varepsilon(x^{k-1})$ via \eqref{eq: RASPEN def} (see Algorithm \ref{EvalF_e}), which means solving the small, local systems \eqref{eq: c_i} for $C_i(x^{k-1})$ in parallel by using a Newton-type solver on the inner level, e.g., Algorithm \ref{alg:epsilonContinuation}.
The second part is solving the Newton linear system \eqref{eq:raspen_newton_update}.
This can be done efficiently using a (matrix-free) Krylov subspace method.
The corrections  $C_i(x^{k-1})$ are stored and used again for the assembly of the function $d\mapsto \pre'(x^{k-1})d$ (see Algorithm~\ref{EvalRaspenJac}).
\begin{algorithm}[t!]
\caption{One-level RASPEN with $\varepsilon$-continuation in the inner Newton}
\begin{algorithmic}[1]
\Require initial guess $x^0$, tolerance $\mbox{tol}$, maximum number of iterations $k_{\max}$, target regularization $\varepsilon$.
\State Initialize $k=0$.
\State Assemble $\mathcal{F}_{\varepsilon}(x^{0})$ via Algorithm \ref{EvalF_e} for $\varepsilon_{\min}=\varepsilon$ and store the corrections $\big(C_i(x^{0})\big)_{i=0}^I$.
\While {$\|\mathcal{F}_{\varepsilon}(x^k)\|>\max\{\mbox{tol},\mbox{tol}\|\mathcal{F}_{\varepsilon}(x^0)\}$ and $k< k_{\max}$}
\State Compute $d^k$ by solving $\mathcal{F}_{\varepsilon}'(x^{k})d^k=-\mathcal{F}_{\varepsilon}(x^{k})$ via a matrix-free Krylov method, where
the map $(d\mapsto \mathcal{F}_{\varepsilon}'(x^{k})d)$ is assembled using $\big(C_i(x^{k})\big)_{i=0}^I$ in Algorithm \ref{EvalRaspenJac}.
\State Update $x^{k+1}=x^{k}+d^k$.
\State Set $k=k+1$.
\State Assemble $\mathcal{F}_{\varepsilon}(x^{k})$ by Algorithm \ref{EvalF_e} for $\varepsilon_{\min}=\varepsilon$ and store the corrections $\big(C_i(x^{k})\big)_{i=0}^I$.
\EndWhile
\State \textbf{Output:} $x^k$.
\end{algorithmic}
\label{RASPEN}
\end{algorithm}
\begin{algorithm}[t]
\caption{Evaluation of $\mathcal{F}_{\varepsilon}$: $x\to \mathcal{F}_{\varepsilon}(x)$}
\begin{algorithmic}[1]
\Require iterate $x$,  target regularization $\varepsilon_{\min}$.
\For{$i=1,\ldots,I$ \textbf{in parallel}}
\State Solve the local systems 
(\ref{eq: c_i}) for $C_i(x)$ using Algorithm~\ref{alg:epsilonContinuation} with or without continuation up to $\varepsilon_{\min}$.
\EndFor
\State Assemble $\mathcal{F}_{\varepsilon}(x)$ using \eqref{eq: RASPEN def}.
\State \textbf{Output:} $\mathcal{F}_{\varepsilon}(x), \big(C_i(x)\big)_{i=1}^I$.
\end{algorithmic}
\label{EvalF_e}
\end{algorithm}
\begin{algorithm}[t!]
\caption{Action of $\mathcal{F}_{\varepsilon}'(x):$ $d\to\pre'_{\varepsilon}(x)d$}
\begin{algorithmic}[1]
\Require iterate $x$, direction $d$,  corrections $\big(C_i(x)\big)_{i=1}^I$.
\For{$i=1,\ldots,I$ \textbf{in parallel}}
\State Solve the linear system arising from \eqref{DerC} for $C_i'(x)d$.
\EndFor
\State Assemble $\mathcal{F}_\varepsilon' (x)d$ via \eqref{eq: raspen der}.\\
\State \textbf{Output:} $\mathcal{F}_\varepsilon'(x)d$.
\end{algorithmic}
\label{EvalRaspenJac}
\end{algorithm}
%
In every GMRES iteration in solving the global linear system of the RASPEN iterations, the action of $d\mapsto \pre'(x^k)d$ is needed, so according to \eqref{eq: raspen der}, we can solve 
for $C_i'(x^k)d$ in parallel (see Algorithm \ref{EvalRaspenJac}).
The local linear systems of the inner Newton procedure are small in size and can be solved using direct solvers for sparse matrices (we apply Matlab's mldivide operation).
\section{Numerical experiments}\label{sec:num}
%
In this section, numerical experiments are performed to assess the efficiency of the proposed computational framework.  We compare the following six methods: Monolithic Newton with and without $\varepsilon-$continuation (\New, \Newc) (see Algorithm \ref{alg:epsilonContinuation}), linear RAS preconditioned Newton with and without $\varepsilon-$continuation (\NlinRAS, \NlinRASc), nonlinear RAS preconditioned Newton with and without $\varepsilon-$continuation in the first inner iteration (\RASPEN{}, \RASPENc{}) (see Algorithm \ref{RASPEN}). We consider a parallel implementation of the proposed algorithms on a CPU with 64 cores. As a baseline for comparison, we consider Newton without $\varepsilon$-continuation for small $\varepsilon$ ($\varepsilon \approx \varepsilon_{\text{mach}}$), since this algorithm essentially behaves like a semismooth Newton method (i.e., the equivalent Primal-Dual Active Set Strategy \cite{HintermuellerItoKunisch:2002:1}). Note that for the RASPEN methods, we consider the continuation strategy only in the first inner Newton iterations, as the bulk of the computation time for the inner Newton is concentrated there.
In particular, in Section \ref{sec:num:Mono}, we study the performance of the monolithic Newton method and the effect of our continuation strategy and linear RAS preconditioning.
This study provides important insights for the behavior of inner subdomain iterations of the nonlinear preconditioner (RASPEN), which is then studied in Section \ref{sec:num:RASPEN}. 
Further, a comparison of all presented methods is given in Section \ref{sec:num:RASPEN}.
All numerical tests are performed on problem \eqref{eq:P} with the settings reported in Section \ref{sec:FrameworkAndNumerics}.
Throughout the numerical experiments, we use an outer tolerance $tol=10^{-10}$ and for the inner Newton methods in RASPEN an inner tolerance of $10^{-8}$. The initial regularization is chosen to be $\varepsilon_0=1$ and the continuation rate as $\gamma=\frac{1}{5}$. 
We consider $N=450$ discretization points per dimension, leading to a system of size $2N^2= 405000$. The initial guess for all experiments is $x_0= 0\in \mathbb R^{2N^2}$ and we choose a backtracking parameter of $\sigma=1.1$. The overlap for the domain decomposition is set to $mh$ for $m=2$ and the mesh-size $h = \frac{1}{N+1}$.
\subsection{Monolithic Newton and linear preconditioning}\label{sec:num:Mono}
This section is concerned with numerical experiments to assess the performance of the monolithic Newton method as a baseline and the effect of both regularization/continuation and linear preconditioning.
To this purpose, we first set a $2 \times 2$ subdomain decomposition and report in Table~\ref{tab:MONO1} number of Newton iterations, (average) number of GMRES iterations and computational times (in seconds) of four different configurations: \New, \Newc, \NlinRAS\, and \NlinRASc\, for different values of final continuation values $\varepsilon_{\min}$.
\setlength\tabcolsep{1.6mm}
\begin{table}[t!]
	\centering
	\begin{tabular}{|l||c|c|c|c|c|}
		\hline
		$\varepsilon_{\min}$ & 1& 1e-5&1e-10& 1e-15  \\
		\hline\hline
		\New & 11 - 866 - 1.5e4 &35 - 1261 - 1.1e5 &41 - 1328 - 1.4e5 &40 - 1341 - 1.5e5  \\
		\Newc & 11 - 866 - 1.3e4 &20 - 1153 - 4.6e4 &21 - 1162 - 5.0e4 &23 - 1128 - 5.1e4\\
		\NlinRAS & 11 - 28 - 2.7e2 &35 - 32 - 9.9e2 &41 - 33 - 1.2e3 &40 - 33 - 1.2e3 \\
		\NlinRASc & 11 - 28 - 2.7e2 &20 - 32 - 5.5e2 &21 - 32 - 5.9e2 &23 - 31 - 6.4e2   \\
		\hline 
	\end{tabular}
	\caption{Outer Newton iterations - average GMRES iterations - computational times [s] of four configurations of monolithic Newton for different values of $\varepsilon=\varepsilon_{\min}$ and a $2\times2$-subdomain decomposition.}
	\label{tab:MONO1}
\end{table}
The results of Table~\ref{tab:MONO1} show clearly the beneficial effect of both linear preconditioning and continuation. On the one hand, RAS linear preconditioning impacts only the number of GMRES iterations, reducing them by a factor of about 10. On the other hand, the continuation strategy is capable of reducing substantially the number of outer Newton iterations (by a factor of 2-3 for $\varepsilon$ equal to $10^{-5}$, $10^{-10}$, and $10^{-15}$), while also leading to a reduction of number of GMRES iterations (even for the linearly preconditioned case (\NlinRASc). 
All these beneficial effects are clearly visible in the computational times.
Next, we study the robustness of linear RAS preconditioner and continuation with respect to the number of subdomains. Therefore we decompose the domain $\Omega=(0,1)^2$ into $2 \times s$ overlapping subdomains, for $s=2,\dots,8$. In Table~\ref{tab:MONO2}, we report the average number of GMRES iterations for the three configurations \New, \NlinRAS\, and \NlinRASc\, and different values of $\varepsilon_{\min}$.
\setlength\tabcolsep{1.6mm}
\begin{table}[h]
	\centering
	\begin{tabular}{|c||c|c|c|c|}
		\hline
		\# sub/ $\varepsilon_{\min}$ &1& 1e-5& 1e-10 & 1e-15  \\
		\hline\hline
		$2\times 2$ &866 - 28 - 28&1247 - 33 - 32&1261 - 32 - 32&1328 - 33 - 32\\ 
		$2\times 3$ &866 - 34 - 34&1247 - 45 - 44&1261 - 43 - 43&1328 - 44 - 42\\ 
		$2\times 4$ &866 - 37 - 37&1247 - 55 - 52&1261 - 52 - 50&1328 - 53 - 50\\ 
		$2\times 5$ &866 - 33 - 33&1247 - 47 - 45&1261 - 44 - 43&1328 - 46 - 43\\ 
		$2\times 6$ &866 - 38 - 38&1247 - 56 - 53&1261 - 53 - 51&1328 - 54 - 51\\ 
		$2\times 7$ &866 - 40 - 40&1247 - 60 - 57&1261 - 56 - 54&1328 - 58 - 54\\ 
		$2\times 8$ &866 - 40 - 40&1247 - 66 - 62&1261 - 62 - 59&1328 - 64 - 59\\ 
		\hline
	\end{tabular}
	
	\caption{Average GMRES iterations for \New - \NlinRAS - \NlinRASc, $N = 450$, and different subdomain decompositions.}
	\label{tab:MONO2}
\end{table}
We observe that the number of GMRES iterations increases with the number of subdomains especially for small regularization parameters.
Moreover, the number of GMRES iterations grow also with respect to $\varepsilon_{\min}$.
The beneficial effect of both linear RAS preconditioner and continuation is evident.
The computational times corresponding to the cases are reported in Table~\ref{tab:MONO3}.
\setlength\tabcolsep{1.6mm}
\begin{table}[h]
	\centering
	\begin{tabular}{|c||c|c|c|c|}
		\hline
		\# sub/ $\varepsilon_{\min}$ &1& 1e-5& 1e-10 & 1e-15  \\
		\hline\hline
		$2\times 2$ &1.5e4 - 2.7e2 - 2.7e2 &1e5 - 9.1e2 - 6.3e2 &1.1e5 - 9.9e2 - 5.5e2 &1.4e5 - 1.2e3 - 5.9e2\\ 
		$2\times 3$ &1.5e4 - 2.0e2 - 2.0e2 &1e5 - 7.3e2 - 5.1e2 &1.1e5 - 8e2 - 4.5e2 &1.4e5 - 9.6e2 - 4.6e2\\ 
		$2\times 4$ &1.5e4 - 1.6e2 - 1.6e2 &1e5 - 7.1e2 - 4.8e2 &1.1e5 - 7.4e2 - 4.2e2 &1.4e5 - 9.1e2 - 4.3e2\\ 
		$2\times 5$ &1.5e4 - 1.3e2 - 1.2e2 &1e5 - 5.3e2 - 3.4e2 &1.1e5 - 5.4e2 - 3e2 &1.4e5 - 6.7e2 - 3.2e2\\ 
		$2\times 6$ &1.5e4 - 1.7e2 - 1.4e2 &1e5 - 6.0e2 - 3.9e2 &1.1e5 - 6.1e2 - 3.3e2 &1.4e5 - 7.3e2 - 3.4e2\\ 
		$2\times 7$ &1.5e4 - 1.2e2 - 1.3e2 &1e5 - 5.8e2 - 4.3e2 &1.1e5 - 6.2e2 - 3.6e2 &1.4e5 - 7.6e2 - 3.7e2\\ 
		$2\times 8$ &1.5e4 - 1.3e2 - 1.3e2 &1e5 - 6.9e2 - 4.6e2 &1.1e5 - 7.1e2 - 4.1e2 &1.4e5 - 8.7e2 - 4.2e2\\
		\hline
	\end{tabular}
	\caption{Computational times [s] for \New - \NlinRAS - \NlinRASc, and different subdomain decompositions.}
	\label{tab:MONO3}
\end{table}
These also show the benefit of our continuation and preconditioning strategies.

\subsection{RASPEN}\label{sec:num:RASPEN}
Here, we focus on our strategies based on the RASPEN approach, and we present corresponding results of numerical experiments to assess the performance of \RASPEN{}, and \RASPENc. As in Section \ref{sec:num:Mono}, we first set a $2 \times 2$ subdomain decomposition and report in Table~\ref{tab:RASPEN1} number of outer RASPEN iterations, average number of parallel inner (subdomain) iterations, (average) number of GMRES iterations, and computational times (in seconds).
\setlength\tabcolsep{0.4mm}
\begin{table}[h!]
	\centering
	
	\begin{tabular}{|l|| c|c|c|c|c|}
		\hline
		$\varepsilon_{\min}$ & 1 & 1e-5& 1e-10 &  1e-15  \\
		\hline\hline
		\RASPEN &  3 - 6 - 33 - 174 &3 - 14 - 35 - 362 &3 - 15 - 34 - 389 &3 - 15 - 34 - 381 \\ 
		\RASPENc & 3 - 6 - 33 - 176 &3 - 5 - 35 - 161 &3 - 7 - 34 - 213 &3 - 8 - 34 - 231 \\ 
		\hline 
	\end{tabular}
	
	\caption{Outer iterations - average parallel inner iterations - average outer GMRES iterations - computational times [s] for different values of $\varepsilon=\varepsilon_{\min}$ and a $2\times2$ decomposition .}
	\label{tab:RASPEN1}
\end{table}
The results of Table~\ref{tab:RASPEN1} show clearly the benefit of using the continuation strategy in the first inner iteration. 
While the number of outer iterations is essentially constant (equal to 3), the number of parallel inner iterations is reduced by a factor of 2 when the continuation is used. 
The number of average outer GMRES iterations is stable in all cases and not influenced by the continuation.
Finally, the computational times are lower when the continuation is used, in agreement with the lower number of inner iterations. Therefore, according to Section \ref{sec:num:Mono} and Table \ref{tab:RASPEN1} the continuation strategy improves both the performance of monolithic Newton methods (with and without linear preconditioning) as well as the nonlinear preconditioned method due to the improvement in the inner Newton.

Next, we study the behavior of our numerical frameworks with respect to the number of subdomains and perform numerical experiments using the same settings of Section \ref{sec:num:Mono}.
Table~\ref{tab:RASPEN2} shows the number of outer iterations,
\setlength\tabcolsep{1.6mm}
\begin{table}[h!]
	\centering
	\begin{tabular}{|c||c|c|c|c|}
		\hline
		\# sub/ $\varepsilon_{\min}$ &1& 1e-5& 1e-10 & 1e-15  \\
		\hline\hline
		$2\times 2$&3 - 3&3 - 3&3 - 3&3 - 3 \\ 
		$2\times 3$&5 - 5&5 - 5&5 - 5&5 - 5 \\ 
		$2\times 4$&5 - 5&5 - 5&5 - 5&5 - 5 \\ 
		$2\times 5$&3 - 3&3 - 3&3 - 3&3 - 3 \\ 
		$2\times 6$&5 - 5&5 - 5&5 - 5&5 - 5 \\ 
		$2\times 7$&5 - 5&5 - 5&5 - 5&5 - 5 \\ 
		$2\times 8$&5 - 5&5 - 5&5 - 5&5 - 5 \\ 
		\hline
	\end{tabular}
	\caption{Outer iterations for \RASPEN{} - \RASPENc.}
	\label{tab:RASPEN2}
\end{table}
from which it is clear that all methods are robust against the number of subdomains and the regularization parameter $\varepsilon_{\min}$.
To further investigate the performances, we report in Table~\ref{tab:RASPEN3} the average number of inner iterations in dependence on the number of subdomains and the regularization parameter.
\setlength\tabcolsep{1.6mm}
\begin{table}[h!]
	\centering
	
	\begin{tabular}{|c||c|c|c|c|}
		\hline
		\# sub/ $\varepsilon_{\min}$ &1& 1e-5& 1e-10 & 1e-15  \\
		\hline\hline
		$2\times 2$&5 - 5&13 - 4&15 - 7&14 - 8 \\
		$2\times 3$&4 - 4&9 - 4&10 - 5&9 - 6 \\ 
		$2\times 4$&5 - 5&9 - 4&9 - 5&9 - 6 \\ 
		$2\times 5$&6 - 6&14 - 5&15 - 7&15 - 8 \\ 
		$2\times 6$&5 - 5&9 - 4&10 - 5&9 - 6 \\ 
		$2\times 7$&5 - 5&9 - 4&10 - 6&10 - 7 \\ 
		$2\times 8$&5 - 5&9 - 4&10 - 5&10 - 6 \\ 
		\hline
	\end{tabular}
	\caption{Average parallel inner iterations for \RASPEN{} - \RASPENc. }
	\label{tab:RASPEN3}
\end{table}
As before, one can observe the benefit of the continuation approach, resulting in a reduction in parallel iterations by up to half (for $\varepsilon_{\min}=10^{-5},10^{-10},10^{-15}$).
Table~\ref{tab:RASPEN4} shows the number of average GMRES iterations, which grow with increasing number of subdomains, but stay almost constant for decreasing $\varepsilon_{\min}$.
\setlength\tabcolsep{0.3mm}
\begin{table}[h!]
	\centering
	\begin{tabular}{|c||c|c|c|c|}
		\hline
		\#sub/$\varepsilon_{\min}$ &1& 1e-5& 1e-10 & 1e-15   \\
		\hline\hline
		$2\times 2$&29 - 29&28 - 28&27 - 27&27 - 27 \\ 
		$2\times 3$&34 - 34&38 - 38&39 - 39& 39 - 39 \\ 
		$2\times 4$&36 - 36&39 - 39&39 - 39&39 - 39 \\ 
		$2\times 5$&33 - 33&35 - 35&34 - 34&34 - 34 \\ 
		$2\times 6$&37 - 37&43 - 43&42 - 42&41 - 41 \\ 
		$2\times 7$&38 - 38&43 - 43&42 - 42&43 - 43 \\ 
		$2\times 8$&39 - 39&44 - 44&44 - 44&43 - 43 \\ 
		\hline
	\end{tabular}
	
	\caption{Average GMRES iterations for \RASPEN{} - \RASPENc.}
	\label{tab:RASPEN4}
\end{table}
As $\varepsilon_{\min}$ decreases, the advantage of the continuation strategy becomes evident. Additionally, the benefit of parallelization becomes apparent when more subdomains are used.
\setlength\tabcolsep{0.3mm}
\begin{table}[h!]
	\centering
	
	\begin{tabular}{|c||c|c|c|c|}
		\hline
		\#sub/$\varepsilon_{\min}$ &1& 1e-5& 1e-10 & 1e-15  \\
		\hline\hline
		$2\times 2$&335 - 331&762 - 294&865 - 433&811 - 486 \\ 
		$2\times 3$&353 - 354&674 - 355&703 - 444&683 - 464 \\ 
		$2\times 4$&297 - 300&506 - 293&523 - 337&518 - 383 \\ 
		$2\times 5$&174 - 176&362 - 161&389 - 213&381 - 231 \\ 
		$2\times 6$&230 - 234&385 - 224&394 - 264&378 - 277 \\ 
		$2\times 7$&224 - 221&354 - 218&367 - 257&363 - 277 \\ 
		$2\times 8$&221 - 220&331 - 221&338 - 238&335 - 260 \\ 	
		\hline
	\end{tabular}
	
	\caption{Computational times [s] for \RASPEN{} - \RASPENc{}.}
	\label{tab:RASPEN5}
\end{table}
\\
Finally, we compare all methods in Table~\ref{tab: comparison all methods 1 lvl n=450} for two subdomain decompositions $2\times2$ and $2\times 5$. In our experiments, we observe that nonlinearly preconditioned methods are more efficient than linearly preconditioned ones. Additionally, methods with continuation outperform those without in terms of computation time. Further, in Table \ref{tab: additional}, we compare \NlinRASc\ and \RASPENc{} for subdomain decompositions $s\times s$ for $s=2\ldots,8$. While the outer iterations (and parallel inner iterations) stay nearly constant for both methods, the outer GMRES iterations increase with an increasing number of subdomains. Also in most cases, \RASPENc{} is superior to \NlinRASc{} in terms of computation time.
\begin{table}[h!]
	\centering
	\begin{tabular}{|l||c|c|c|c|c|c|}
		\hline
		$2\times 2$& \New & \Newc & \NlinRAS &  \NlinRASc & \RASPEN & \RASPENc \\
		\hline\hline
		Outer it.& 40 & 23 & 40 & 23 & 3 & 3   \\ 
		Average outer GMRES it.& 1341 & 1128 & 33 & 31 & 27 & 27   \\ 
		Average parallel inner it.& - &  - &  - &  - & 14 & 8   \\ 
		Time [s] & 145009 & 51406 & 1179 & 640 & 811 & 486   \\
		\hline
	\end{tabular}
	\begin{tabular}{|l||c|c|c|c|c|c|}
		\hline
		$2\times 5$& \New & \Newc & \NlinRAS &  \NlinRASc & \RASPEN & \RASPENc \\
		\hline\hline
		Outer it.&	40 & 23 & 40 & 23 & 3 & 3   \\ 
		Average outer GMRES it.& 1341 & 1128 & 47 & 43 & 34 & 34   \\ 
		Average parallel inner it.&  - &  - &  - &  - & 15 & 8   \\ 
		Time [s] & 145009 & 51406 & 676 & 344 & 381 & 231  \\
		\hline
	\end{tabular}
	\label{tab: comparison all methods 1 lvl n=450}
	\caption{Comparison of all methods for a subdomain decomposition of $2\times 2$, $2\times 5$ and $\varepsilon_{\min} = 10^{-15}$.}
\end{table}
\begin{table}[h!]
	\centering
	\begin{tabular}{|l||c|c|c|c|c|c|c|}
		\hline
		\RASPENc & $2\times 2$ &$3\times 3$ & $4\times 4$ &  $5\times 5$ & $6\times 6$ &$7\times 7$& $8\times 8$ \\
		\hline\hline
		Outer it.& 5 & 6& 5 & 3 & 6 & 5& 5 \\ 
		Average outer GMRES it.& 27 & 41  & 45 &36  & 51 &  55 &57 \\ 
		Average parallel inner it.& 8 &  7 & 7  & 8 & 27 &7  & 7\\ 
		Time [s] & 489 & 468 &  276& 162 & 637 & 417 & 577 \\
		\hline
	\end{tabular}
	\begin{tabular}{|l||c|c|c|c|c|c|c|}
		\hline
		\NlinRASc & $2\times 2$ &$3\times 3$ & $4\times 4$ &  $5\times 5$ & $6\times 6$ &$7\times 7$ &$8\times 8$ \\
		\hline\hline
		Outer it.& 23 & 23 & 23 & 23 & 23 & 23& 23 \\ 
		Average outer GMRES it.& 31 & 45 & 57 & 46 & 56 &  64 &70 \\ 
		Average parallel inner it.& - &  - &  - &  - & - & - & - \\ 
		Time [s] & 501 & 403 & 421 &301  & 413 & 567 & 742 \\
		\hline
	\end{tabular}
	\caption{Comparison of \NlinRASc\  and \RASPENc\ for different decompositions and $\varepsilon_{\min} = 10^{-15}$.}
	\label{tab: additional}
\end{table}
%

\subsection{Weak scalability}
In this section, we present results on weak scalability for the methods \RASPENc{}, \RASPEN{}, \NlinRASc{} and \NlinRAS{}. 
We use a fixed subdomain size of $50^2$ gridpoints and increase the number of subdomains from $2\times 2$ to $11\times 11$ with the number of processors (Table \ref{tab:review_diff_scalability}).
%
%
\begin{table}[h!]
	\centering
	\begin{tabular}{|c||c|c|c|c|}
        \hline
        time [s]  & \RASPENc&  \RASPEN& \NlinRASc & \NlinRAS  \\
        \hline\hline    $2\times2$&7 - 3 - 14&9 - 3 - 14&41 - 27 - 17&47 - 31 - 17\\ 
$2\times2$&3 - 14 - 7&3 - 14 - 9&27 - 17 - 41&31 - 17 - 47\\ 
$3\times3$&5 - 21 - 17&5 - 21 - 20&27 - 27 - 60&37 - 28 - 83\\ 
$4\times4$&5 - 28 - 87&7 - 29 - 120&27 - 37 - 95&38 - 38 - 138\\ 
$5\times5$&3 - 29 - 40&3 - 29 - 50&27 - 38 - 138&38 - 40 - 205\\ 
$6\times6$&6 - 44 - 386&8 - 43 - 491&27 - 48 - 237&39 - 51 - 363\\ 
$7\times7$&5 - 49 - 254&5 - 49 - 272&27 - 59 - 405&40 - 62 - 647\\ 
$8\times8$&5 - 51 - 482&5 - 51 - 503&27 - 64 - 676&40 - 67 - 1056\\ 
$9\times9$&5 - 63 - 802&5 - 63 - 825&23 - 70 - 947&40 - 77 - 1845\\ 
$10\times10$&3 - 43 - 672&3 - 43 - 746&22 - 63 - 1111&40 - 69 - 2230\\ 
$11\times11$&5 - 77 - 1893&5 - 77 - 1913&22 - 83 - 2123&39 - 91 - 4151\\ 
        \hline
    \end{tabular}
	\caption{(Outer iterations - computational times [s] - Average GMRES iterations) for different algorithms and $\varepsilon_{\min}=10^{-15}$.}
	
	\label{tab:review_diff_scalability}
\end{table} 
We observe that the computation times increase slightly faster than linearly with the number of subdomains due to an increase in the number of GMRES iterations. 
This is due to the increasing number of subdomains.
A second level iteration could help mitigate this effect, but this is beyond the scope of this paper.
\subsection{Comparison of the algorithms for varying regularization and continuation parameters}
In this section, we analyze the robustness of the algorithms with respect to the regularization parameters $\mu$ and $\nu$, and the continuation parameters $\varepsilon_0$ and $\gamma$, see Problem \eqref{eq:P} and Algorithm \ref{alg:epsilonContinuation}.
\\
In Table \ref{tab:eps_rates_sensitivity}, we report the computation time and number of outer iterations of four algorithms (\RASPENc{}, \RASPEN{}, \NlinRASc{} and \NlinRAS{}).
The table is split into three blocks of two block rows each.
In the first block of Table \ref{tab:eps_rates_sensitivity}, we focus on a setting where the projection operator's nonlinearity is more pronounced, i.e., $\mu=1$ with small $L^2$-regularization $\nu=10^{-8}$.
For $\mu=1$, Newton's method without continuation needs around $86$ iterations to converge. 
For both linear and nonlinear preconditioned solvers, the continuation strategy reduces the number of outer and inner iterations, which is also observed in the computation times. 
Further, for this parameter setup, one observes that the \RASPEN{} methods are superior to the \NlinRAS{} methods. 
In this case, a smaller continuation rate $\gamma=2$ tends to give better results than $\gamma=5,10$. 
Furthermore, for the linear methods, a larger initial continuation parameter $\varepsilon_0=1$ works best, while for the nonlinear methods the choice of $\varepsilon_0=10^{-5}$ works best.\\
In the second block, we consider a small $L^1$-regularization $\mu=10^{-4}$ with small $L^2$-regularization $\nu=10^{-8}$. 
For $\mu=10^{-4}$, the $\varepsilon$-continuation does not generally improve the outer iterations and computation times. 
Only if the continuation parameters are well-chosen, one observes a small reduction in computation time and outer iterations.
Hence, in this case, only with appropriate tuning of the continuation parameters, one can observe computational improvements.
In the third block, we consider $\mu=1$ with larger $L^2$-regularization paramter $\nu=10^{-4}$. 
Here, one clearly sees that the continuation does not pay off, leading for all combinations of $(\varepsilon_0,\gamma)$ to an increase in outer and inner iterations and computation time. 
Also since the $L^2$-regularization is chosen to be relatively large, monolithic Newton converges fast in terms of outer iterations, and hence the linear preconditioner works better than the nonlinear one.

To summarize, the continuation strategy in combination with nonlinear preconditioning is most beneficial when $\mu$ is large relative to the $L^2$-regularization parameter.
\begin{table}[h!]
	\centering
	\begin{tabular}{|c||c|c|c|c|}
		\hline
		\multicolumn{4}{|c|}{ $\nu=10^{-8}$, $\mu=1$} \\ 
		\hline \hline
		time [s] & $\varepsilon_0=1$&  $\varepsilon_0=10^{-3}$ &   $\varepsilon_0=10^{-5}$   \\
		\hline
		$\gamma= 2$ &634 - 1148 - 856 - 1499 &342 - 1147 - 1177 - 1505 &269 - 1148 - 1247 - 1498 \\  
		$\gamma = 5$ &529 - 1149 - 905 - 1495 &491 - 1148 - 1183 - 1474 &513 - 1148 - 1222 - 1485 \\ 
		$\gamma= 10$ &1036 - 1147 - 1325 - 1491 &1014 - 1149 - 1438 - 1502 &1024 - 1147 - 1421 - 1510 \\
		\hline
		outer it. & $\varepsilon_0=1$&  $\varepsilon_0=10^{-3}$ &   $\varepsilon_0=10^{-5}$   \\
		\hline
		$\gamma= 2$ &3 - 3 - 52 - 86 &3 - 3 - 71 - 86 &3 - 3 - 74 - 86 \\  
		$\gamma = 5$ &3 - 3 - 53 - 86 &3 - 3 - 70 - 86 &3 - 3 - 71 - 86 \\ 
		$\gamma= 10$ &3 - 3 - 79 - 86 &3 - 3 - 82 - 86 &3 - 3 - 82 - 86 \\
		\hline
		\hline
		\multicolumn{4}{|c|}{$\nu=10^{-8}$, $\mu=10^{-4}$} \\ 
		\hline\hline
		time [s] & $\varepsilon_0=1$&  $\varepsilon_0=10^{-3}$ &   $\varepsilon_0=10^{-5}$   \\
		\hline
		$\gamma= 2$ &936 - 955 - 662 - 728 &936 - 955 - 661 - 715 &946 - 954 - 689 - 727 \\ 
		$\gamma = 5$ &957 - 955 - 717 - 722 &955 - 955 - 717 - 728 &956 - 956 - 743 - 717 \\ 
		$\gamma= 10$ &955 - 957 - 750 - 735 &955 - 956 - 747 - 735 &952 - 953 - 736 - 746 \\
		\hline
		outer it. & $\varepsilon_0=1$&  $\varepsilon_0=10^{-3}$ &   $\varepsilon_0=10^{-5}$   \\
		\hline
		$\gamma= 2$&4 - 4 - 75 - 78 &4 - 4 - 75 - 78 &4 - 4 - 77 - 78 \\ 
 
		$\gamma = 5$ &4 - 4 - 78 - 78 &4 - 4 - 78 - 78 &4 - 4 - 78 - 78 \\  
		$\gamma= 10$ &4 - 4 - 78 - 78 &4 - 4 - 78 - 78 &4 - 4 - 78 - 78 \\ 
		\hline\hline
		\multicolumn{4}{|c|}{$\nu=10^{-4}$, $\mu=1$} \\ 
		\hline \hline
		time [s] & $\varepsilon_0=1$&  $\varepsilon_0=10^{-3}$ &   $\varepsilon_0=10^{-5}$   \\
		\hline
		$\gamma= 2$ &540 - 203 - 1313 - 115 &331 - 207 - 620 - 117 &278 - 208 - 438 - 117 \\ 
		$\gamma = 5$&429 - 209 - 1086 - 117 &281 - 205 - 479 - 114 &246 - 208 - 351 - 116 \\ 
		$\gamma= 10$&357 - 209 - 888 - 113 &249 - 208 - 391 - 122 &227 - 212 - 302 - 117 \\ 
		\hline
		outer it. & $\varepsilon_0=1$&  $\varepsilon_0=10^{-3}$ &   $\varepsilon_0=10^{-5}$   \\
		\hline
		$\gamma= 2$&3 - 3 - 43 - 4 &3 - 3 - 20 - 4 &3 - 3 - 14 - 4 \\  
		$\gamma = 5$ &3 - 3 - 33 - 4 &3 - 3 - 15 - 4 &3 - 3 - 11 - 4 \\
		$\gamma= 10$ &3 - 3 - 26 - 4 &3 - 3 - 12 - 4 &3 - 3 - 9 - 4 \\ 
		\hline
	\end{tabular}
	\caption{Computational times and outer iterations for different parameters ($\gamma,\varepsilon_0,\mu,\nu$) and algorithms in order \RASPENc - \RASPEN - \NlinRASc - \NlinRAS.
	}
	\label{tab:eps_rates_sensitivity}
\end{table}
%


\section{Conclusion}\label{sec:concl}
In this contribution, we considered smooth approximations of optimality systems for $L^1$-regularized, semilinear optimal control problems. On a theoretical level, we established the solvability of the smoothed system and proved the convergence of the solution towards the solution of the nonsmooth system with convergence order. These considerations gave rise to a continuation approach that was combined with both linear and nonlinear RAS preconditioned Newton methods. The numerical experiments demonstrated, on the one hand, the efficiency of the continuation approach for both linear and nonlinear preconditioning, and on the other hand, the potential advantage of using nonlinear preconditioned methods over linear ones in situations involving significant nonlinearity and small $L^2$-regularization. In the future, one could consider extending the approach by suitable coarse correction strategies.
%

\section*{Declarations}
\subsection*{Conflict of interest}
The authors declare no conflict of interest.

\subsection*{Data availability statements}
The data is made available upon request.

\subsection*{Funding}
The authors did not receive support from any organization for the submitted work.

\bibliography{biblio}

\end{document}